\newtheorem{defn}{Definition}[section]
\newtheorem{lemma}[defn]{Lemma}
\newtheorem{prop}[defn]{Proposition}
\newtheorem{theo}[defn]{Theorem}
\newtheorem{coro}[defn]{Corollary}
\newtheorem{claim}{Claim}
\newtheorem{rk}[defn]{Remark}
\def\Ric{\mathop{\rm Ric}\nolimits}
\def\Rm{\mathop{\rm Rm}\nolimits}
\def\tr{\mathop{\rm tr}\nolimits}
\def\vol{\mathop{\rm vol}\nolimits}
\def\vol{\mathop{\rm Vol}\nolimits}
\def\div{\mathop{\rm div}\nolimits}
\def\Id{\mathop{\rm Id}\nolimits}
\def\Li{\mathop{\rm \mathscr{L}}\nolimits}
\def\Hyp{\mathop{\rm (\mathscr{H})}\nolimits}
\def\Cod{\mathop{\rm Cod}\nolimits}
\def\Ric{\mathop{\rm Ric}\nolimits}
\def\Rm{\mathop{\rm Rm}\nolimits}
\def\tr{\mathop{\rm tr}\nolimits}
\def\vol{\mathop{\rm vol}\nolimits}
\def\vol{\mathop{\rm Vol}\nolimits}
\def\div{\mathop{\rm div}\nolimits}
\def\Id{\mathop{\rm Id}\nolimits}
\def\Li{\mathop{\rm \mathscr{L}}\nolimits}
\def\Hyp{\mathop{\rm (\mathscr{H})}\nolimits}
\def\Hyp{\mathop{\rm \mathscr{H}}\nolimits}
\newcommand*{\avi}{\mathop{\ooalign{$\int$\cr$-$}}}
\title[Stability of non compact steady and expanding gradient Ricci solitons]{Stability of non compact steady and expanding gradient Ricci solitons}
\author[Alix Deruelle]{Alix Deruelle}
\address[Alix Deruelle]{Mathematics Institute, University of Warwick, Gibbet Hill Rd, Coventry, West Midlands CV4 7AL}
\email{A.Deruelle@warwick.ac.uk}
\begin{document}
\begin{abstract}
We study the stability of non compact steady and expanding gradient Ricci solitons. We first show that linear stability implies dynamical stability. Then we give various sufficient geometric conditions ensuring the linear stability of such gradient Ricci solitons.
\end{abstract}
\maketitle

\section{Introduction}

Once a smooth manifold $M$ without boundary is fixed, the Ricci flow, introduced by Hamilton, can be considered as a dynamical system on the space of metrics of $M$ modulo the action of diffeomorphisms and homotheties. The fixed points of such dynamical system are called \textit{Ricci solitons}. By definition, these solutions look like $\tau(t)\phi_t^*g$ where $\tau(t)$ is a positive scaling factor, where $(\phi_t)_t$ is a one parameter family of diffeomorphisms and $g$ is a fixed metric. If this family of diffeomorphisms is generated by a gradient vector field, we call these fixed points \textit{gradient} Ricci solitons. Finally, if one plugs this family of metrics into the Ricci flow equation and evaluates this expression at a fixed time, one gets a static equation.\\

 More precisely, a gradient Ricci soliton is a triplet $(M,g,\nabla f)$ where $(M,g)$ is a Riemannian manifold and $f:M\rightarrow\mathbb{R}$ is a smooth function (called the potential function) satisfying
\begin{eqnarray*}
\frac{1}{2}\Li_{\nabla f}(g)=\Ric(g)+\frac{\epsilon g}{2},
\end{eqnarray*}
where $\epsilon\in\{-1,0,1\}$. We will focus on the steady ($\epsilon=0$) and the expanding ($\epsilon=1$) cases.
We assume that $(M,g)$ is complete, this suffices to ensure the completeness of the vector field $\nabla f$ : \cite{Zha-Zhu}.

Let us fix a gradient Ricci soliton $(M,g_0,\nabla^{g_0}f_0)$. Define $X^0:=\nabla^{g_0}f_0$.
In this paper, we study the stability of such gradient Ricci soliton under Ricci flow, i.e.
\[
\left\{
\begin{array}{rl}
&\partial_tg=-2\Ric(g(t)) \quad\mbox{on}\quad M\times (0,+\infty),\\
& g(0)=g_0+h.
\end{array}
\right.
\]

As we want to prove convergence to the fixed background metric $g_0$, it is more convenient to consider the following modified ($\epsilon$-)Ricci flow (MRF) :
\[
\left\{
\begin{array}{rl}
&\partial_tg=-2\Ric(g(t))-\epsilon g(t)+\Li_{X^0}(g(t)) \quad\mbox{on}\quad M\times (0,+\infty),\\
& g(0)=g_0+h.
\end{array}
\right.
\]
As a first remark, the (MRF) flow is equivalent with the usual Ricci flow : see lemma \ref{equ-flo}. Secondly, there is a vast amount of literature concerning the stability of Einstein metrics (constant potential function) under the Ricci flow both on compact and non compact manifolds. Rather than making an exhaustive list of the existing work, we prefer to discuss the differences between our approach and the other approaches shared by different works. For instance, in the compact Ricci-flat case \cite{Has-Mul-Lam} and the shrinking  gradient Ricci soliton case \cite{Ach-Shr}, \cite{Kro-Sta-Ins}, the authors consider the previous modified Ricci flow with $\nabla^{g(t)} f(t)$ instead of the fixed vector field $\nabla^0f_0$ where $f(t)$ is a smooth function depending on time implicitly produced by a minimizing procedure involving the relevant functional (Perelman's energy or entropy). As we consider non Einstein steady and expanding Ricci solitons, they are necessarily non compact : in that setting, the corresponding functionals are not even defined. \\

For that reason, we decide to follow the strategy of \cite{Sch-Sch-Sim} on the stability of non compact Hyperbolic spaces. In the spirit of the so called DeTurck's trick, as the (MRF) is a degenerate parabolic equation,  we first need to consider the following modified ($\epsilon$-)Ricci harmonic map heat flow (MRHF) : 
\[
\left\{
\begin{array}{rl}
&\partial_tg=-2\Ric(g(t))-\epsilon g(t)+\Li_{X^0}(g(t))+\Li_{V}(g(t)) \quad\mbox{on}\quad M\times (0,+\infty)\\
& g(0)=g_0+h
\end{array}
\right.
\]
where $V$ is a vector field defined in coordinates by : $V_i:=g_{ik}(\Gamma(g)^k_{rs}-\Gamma(g_0)_{rs}^k)g^{rs},$ or  globally,
\begin{eqnarray}\label{de-turck-vect}
V(g):=\div_{g_0}g-\frac{1}{2}\nabla^0\tr_{g_0}g.
\end{eqnarray}

As in \cite{Sch-Sch-Sim}, we consider perturbations that are close in the $L^{\infty}$ norm to the fixed background gradient Ricci soliton.
In order to state our first result, we need (to recall) several definitions.  

\begin{defn}
\begin{itemize}
\item The weighted laplacian operator associated to a gradient Ricci soliton  $(M,g,\nabla f)$ is $\Delta_f:=\Delta+\nabla f$.\\
\item The weighted Lichnerowicz operator associated to a gradient Ricci soliton $(M,g,\nabla f)$ is $L:=-\Delta_f-2\Rm(g)\ast$, where $(\Rm(g)\ast h)_{ij}:=\Rm(g)_{iklj}h_{kl}$, for a symmetric $2$-tensor $h$.\\
\item A gradient Ricci soliton  $(M,g,\nabla f)$ is strictly stable if there exists a positive $\lambda$ such that
 \begin{eqnarray*}
\int_M\langle Lh,h\rangle d\mu_f=\int_{M}\arrowvert \nabla h\arrowvert^2-\langle2\Rm(g)\ast h,h\rangle d\mu_f\geq\lambda\int_{M}\arrowvert h\arrowvert^2d\mu_f,
\end{eqnarray*}
for any symmetric $2$-tensors $h$ with compact support, where $d\mu_f:=e^fd\mu(g)$, $d\mu(g)$ being the Riemannian measure associated to the metric $g$. \\
\end{itemize}
\end{defn}
This notion of stability is legitimated by the fact that the linearization of the (MRHF) is exactly the Lichnerowicz operator.
Finally, we will consider the following convexity assumptions on the geometry of the fixed background gradient Ricci soliton (called under the same name $(\Hyp)$) :
\begin{itemize}
\item If $(M,g,\nabla f)$ is a steady gradient Ricci soliton then $(\Hyp)$ consists in
 \begin{eqnarray*}
 \Ric\geq 0,\quad\lim_{+\infty}R=0\quad\sup_M\arrowvert\Rm(g)\arrowvert<+\infty.
\end{eqnarray*}

 \item If $(M,g,\nabla f)$ is an expanding gradient Ricci soliton then $(\Hyp)$ consists in
  \begin{eqnarray*}
\sup_M\arrowvert\Rm(g)\arrowvert<+\infty, \quad\liminf_{x\rightarrow+\infty}\frac{f(x)}{r_p(x)^2}>0.
\end{eqnarray*}
 \end{itemize}
 
Our main theorem is

\begin{theo}\label{theo-MRHF}
Let $(M,g_0,\nabla^0f_0)$ be a strictly stable expanding or steady gradient Ricci soliton satisfying $(\Hyp)$. Then it is dynamically stable.

More precisely, if $g$ is a smooth metric such that
\begin{eqnarray*}
\|g-g_0\|_{L_{f_0}^2}:=\left(\int_M\arrowvert g-g_0\arrowvert^2_{g_0}d\mu_{f_0}\right)^{1/2}=:I
\end{eqnarray*}
 is finite, there exists $\epsilon:=\epsilon(n,k_0,\lambda,I)>0$ such that the following holds, where $k_0:=\sup_M\arrowvert\Rm(g_0)\arrowvert$. If $g$ satisfies $\|g-g_0\|_{\infty}\leq \epsilon$, then there exists an immortal solution to the Ricci harmonic map heat flow converging exponentially fast to $g_0$ in any $C^k$ norm for $k\in \mathbb{N}$.
\end{theo}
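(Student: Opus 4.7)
The plan is to set up a continuity argument for (MRHF). Short-time existence of a smooth solution with bounded curvature should follow from the DeTurck trick, since (MRHF) is strictly parabolic with bounded background geometry under $(\Hyp)$. Writing $g(t) = g_0 + h(t)$, the evolution takes the form $\partial_t h = -Lh + Q(h)$, where $Q$ gathers contributions at least quadratic in $(h,\nabla h,\nabla^2 h)$ with coefficients controlled by $|\Rm(g_0)|$ and $|\nabla^0 f_0|$. I would introduce the continuity threshold $T^{*}$ as the supremum of $T \geq 0$ such that (MRHF) admits a smooth solution on $[0,T]$ with $\sup_{t\in[0,T]}\|h(t)\|_\infty \leq 2\epsilon$, and aim to prove $T^{*} = +\infty$ together with exponential decay of $\|h(t)\|_\infty$.

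The core mechanism is a weighted $L^2$ energy estimate. Setting $E(t) := \|h(t)\|_{L^2_{f_0}}^2$, differentiating along (MRHF) and using that $\Delta_{f_0}$ is formally self-adjoint with respect to $d\mu_{f_0}$ yields
\begin{eqnarray*}
E'(t) = -2\int_M \langle Lh, h\rangle\, d\mu_{f_0} + 2\int_M \langle Q(h), h\rangle\, d\mu_{f_0}.
\end{eqnarray*}
Strict stability controls the first term by $-2\lambda E(t)$. For the second, one integration by parts against $d\mu_{f_0}$ (to move one derivative off the highest-order piece of $Q$) combined with the bounded curvature in $(\Hyp)$ should produce the schematic estimate
\begin{eqnarray*}
\left|\int_M \langle Q(h), h\rangle\, d\mu_{f_0}\right| \leq C \|h\|_\infty \left(\int_M \langle Lh, h\rangle\, d\mu_{f_0} + (1+k_0) E(t)\right).
\end{eqnarray*}
Choosing $\epsilon$ small relative to $\lambda$ and $k_0$ allows both pieces to be absorbed into the stable part, leaving $E'(t) \leq -\lambda E(t)$ on $[0,T^{*})$, hence $E(t) \leq I^2 e^{-\lambda t}$.

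To upgrade $L^2_{f_0}$ decay to $L^\infty$ decay and propagate to higher derivatives, I would combine the energy estimate with Shi-type interior estimates for (MRHF): bounded geometry of $g_0$ together with $\|h\|_\infty \leq 2\epsilon$ give uniform bounds $\|\nabla^k h(t)\|_\infty \leq C_k$ for $t \geq 1$. On a unit ball around any $p \in M$, the weight $e^{f_0}$ is comparable to the constant $e^{f_0(p)}$, so the global $L^2_{f_0}$ bound descends to a local $L^2$ bound with an $e^{-f_0(p)}$ prefactor; since $f_0$ is bounded below in both regimes of $(\Hyp)$, interpolation with the Shi bound on $\nabla h$ through a local Sobolev embedding gives $\|h(t)\|_\infty \leq C e^{-\mu t}$ for some $\mu > 0$. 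This closes the bootstrap, forces $T^{*} = +\infty$, and iterating Shi estimates produces exponential convergence in every $C^k$ norm; Lemma \ref{equ-flo} then converts the (MRHF) solution into one of the original Ricci flow.

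The main obstacle is expected to be the nonlinear closure of the weighted energy estimate: $Q(h)$ contains terms with two derivatives on $h$, and the background drift $\Li_{X^0} h$ must be handled simultaneously with the weight $e^{f_0}$ under integration by parts, without losing the sharp constant $\lambda$ coming from strict stability. A related difficulty is the $L^2_{f_0} \to L^\infty$ transfer, where $f_0$ grows very differently at infinity in the two regimes of $(\Hyp)$ --- at most linearly for steady solitons, at least quadratically for expanders --- so the local Sobolev step very likely requires the two cases to be treated separately.
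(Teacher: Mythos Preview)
Your overall architecture matches the paper's: a pointwise evolution inequality for $|h|^2$, a weighted $L^2_{f_0}$ energy decay coming from strict stability, Shi-type interior estimates, an $L^2_{f_0}\to L^\infty$ transfer, and a continuation argument to force $T^*=+\infty$. The $L^2\to L^\infty$ step you propose (local Sobolev plus interpolation against the Shi bound) is a legitimate variant of the paper's argument, which instead uses the Lipschitz bound to find a ball where $|h|\geq m(t)/4$ and compares against the energy via a \emph{weighted} Bishop--Gromov lower bound $\vol_{f_0}B(x,r)\geq c_0 r^n$; the latter is slightly more robust since it only consumes a Ricci lower bound, but both work under $(\Hyp)$. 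One structural difference: the paper constructs the solution by Dirichlet exhaustion on balls $B_{g_0}(p,R)$ with $g(t)|_{\partial B}=g_0$ and passes to the limit, which sidesteps short-time existence on the full non-compact $M$ and makes the integrations by parts in the energy estimate literally boundary-free.

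There is, however, a genuine gap at precisely the point you flag as ``the main obstacle'' but do not resolve. The dangerous part of $Q(h)$ is the second-order piece $(g^{-1}-g_0^{-1})\ast\nabla^{0,2}h$. When you integrate $\langle Q(h),h\rangle$ against $d\mu_{f_0}=e^{f_0}d\mu_{g_0}$ and move one derivative, the derivative hits the weight and you are left with a term of the form $\int\langle\div_{f_0}(g^{-1}-g_0^{-1}),\,h\ast\nabla^0 h\rangle\,d\mu_{f_0}$, where $\div_{f_0}T=\div_{g_0}T+T(\nabla^0 f_0,\cdot)$. Since $|\nabla^0 f_0|$ is unbounded (it grows like $r_p$ in the expanding case by the identity $|\nabla f_0|^2+R=f_0+\mu(g_0)$), your claimed schematic bound by $C\|h\|_\infty(\int\langle Lh,h\rangle+(1+k_0)E)$ does \emph{not} follow from bounded curvature alone. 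The paper closes this with a Koiso-type identity for compactly supported symmetric $2$-tensors,
\[
2\|\nabla^0 T\|_{L^2_{f_0}}^2=\|\Cod(T)\|_{L^2_{f_0}}^2+2\|\div_{f_0}T\|_{L^2_{f_0}}^2-2\langle\Ric_{f_0}(T),T\rangle_{L^2_{f_0}}+2\langle\Rm(g_0)\ast T,T\rangle_{L^2_{f_0}},
\]
and uses the soliton equation (which gives $\Ric_{f_0}=\Ric(g_0)-\nabla^{0,2}f_0=-\tfrac{\epsilon}{2}g_0$) to deduce $\|\div_{f_0}T\|_{L^2_{f_0}}\leq C(\|\nabla^0 T\|_{L^2_{f_0}}+\|T\|_{L^2_{f_0}})$ with $C=C(k_0)$. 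Applied to $T=g^{-1}-g_0^{-1}$, this is exactly the Hardy-type control that makes your inequality true and allows the nonlinear term to be absorbed without losing the constant $\lambda$. Without this identity (or an equivalent weighted Hardy inequality), the energy estimate does not close and the continuation argument stalls.
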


Then, we are able to go back to the (MRF) : 

\begin{theo}\label{theo-MRF}
Let $(g(t))_{t\in[0,+\infty)}$ be the solution to (MRHF) obtained in theorem \ref{theo-MRHF}. Then there exists a one parameter family of diffeomorphisms $(\psi_t)_{t\in[0,+\infty)}$ of $M$ that satisfies $\psi_0=Id_M$ and such that $(\psi_t^*g(t))_{t\in[0,+\infty)}$ is a smooth solution to (MRF) with the same initial condition. Moreover, there exists a diffeomorphism $\psi_{\infty}$ of $M$ such that 
\begin{eqnarray*}
\psi_t\rightarrow \psi_{\infty},\quad \psi_t^*g(t)\rightarrow \psi_{\infty}^*g_0,
\end{eqnarray*}
as $t$ tends to $+\infty$, where the convergence is in (global) $C^k$ norms, for any $k\in\mathbb{N}$.\\
\end{theo}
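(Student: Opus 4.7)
The plan is to invert the DeTurck trick: the extra $\Li_{V}g(t)$ term that distinguishes (MRHF) from (MRF) is to be absorbed by pulling back $g(t)$ along a suitable time-dependent family $\psi_t$ of diffeomorphisms. Concretely, I would seek $\psi_t$ as the flow $\partial_t\psi_t=W_t\circ\psi_t$, $\psi_0=\Id_M$, of a time-dependent vector field $W_t$ prescribed by the requirement that $\tilde g(t):=\psi_t^*g(t)$ solve (MRF). Substituting into $\partial_t(\psi_t^*g(t))$ and using naturality $\psi_t^*\Ric(g)=\Ric(\psi_t^*g)$ together with the pullback rule $\psi_t^*(\Li_Y g)=\Li_{\psi_t^*Y}(\psi_t^*g)$ pins down $W_t$ as an explicit combination of $V(g(t))$ and $X^0$, schematically $W_t=(\psi_t)_*X^0-X^0-V(g(t))$. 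The condition $\psi_0=\Id_M$ then forces $\tilde g(0)=g_0+h$, matching the initial data.

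Short-time existence of $\psi_t$ as a family of diffeomorphisms follows by Picard, since the ODE is quasilinear through the pushforward term but smooth in its arguments. For global existence the cornerstone is the $C^k$ exponential convergence $g(t)\to g_0$ from Theorem~\ref{theo-MRHF}: since $V(g_0)=0$ by \eqref{de-turck-vect} and $V$ depends smoothly on $g$, one gets $\|V(g(t))\|_{C^k}\leq C_ke^{-\mu t}$ for some $\mu>0$. Under the hypotheses $(\Hyp)$, this bound propagates to $\|W_t\|_{C^k}$, and the resulting time-integrability $\int_0^\infty\|W_t\|_{C^k}\,dt<\infty$ feeds into a continuation argument preventing $\psi_t$ from degenerating in finite time.

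The same integrability yields the Cauchy estimate $\|\psi_t-\psi_s\|_{C^k}\leq C\int_s^t\|W_r\|_{C^k}\,dr\to 0$ as $s,t\to\infty$, so $\psi_t\to\psi_\infty$ in $C^k$ for every $k$, with $\psi_\infty$ again a diffeomorphism of $M$. Combined with the convergence of $g(t)\to g_0$ in $C^k$ this yields $\psi_t^*g(t)\to\psi_\infty^*g_0$ in $C^k$. The PDE is then satisfied automatically, since $W_t$ was engineered precisely so that $\psi_t^*g(t)$ solves (MRF) at every time.

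The main obstacle is the global-in-space control of $\psi_t$ on the non-compact manifold $M$, most acutely in the expanding case where $|\nabla f_0|$ grows linearly with the distance to a basepoint (as enforced by $\liminf_\infty f/r_p^2>0$ in $(\Hyp)$). The cross term $(\psi_t)_*X^0-X^0$ in $W_t$ couples the ODE to the large-scale behavior of $X^0$, so one needs $\psi_t-\Id_M$ to remain uniformly small at infinity along the whole trajectory; this is where the weighted exponential convergence and the convexity assumptions $(\Hyp)$ are crucial, both to prevent the ODE from \emph{escaping to infinity} and to upgrade local to \emph{global} $C^k$ convergence in the final statement.
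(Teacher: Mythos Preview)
Your choice $W_t=(\psi_t)_*X^0-X^0-V(g(t))$ is algebraically the right one if you insist that $\psi_t^*g(t)$ solve (MRF) with the \emph{fixed} field $X^0$, but it turns the flow equation into a first-order PDE rather than an ODE: writing out $\partial_t\psi_t=W_t\circ\psi_t$ gives
\[
\partial_t\psi_t(x)=d\psi_t|_x\bigl(X^0(x)\bigr)-X^0(\psi_t(x))-V(g(t))(\psi_t(x)),
\]
which involves the spatial differential $d\psi_t$. Your appeal to ``Picard, since the ODE is quasilinear'' does not settle existence for such a transport-type system. Worse, in the expanding case $|X^0|=|\nabla^0 f_0|$ grows linearly, so the cross term $(\psi_t)_*X^0-X^0$ need not be uniformly small even when $\psi_t$ is $C^1$-close to $\Id_M$; you correctly flag this as the main obstacle but do not resolve it, and your claim that the $C^k$ bound on $V(g(t))$ ``propagates to $\|W_t\|_{C^k}$'' has no content without such control.

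The paper avoids all of this by simply taking $\psi_t$ to be the flow of $-V(g(t))$, which is explicit and independent of $\psi_t$. Since $V(g_0)=0$ and $g(t)\to g_0$ exponentially in every $C^k$, the vector field $-V(g(t))$ is uniformly bounded with exponentially decaying $C^k$ norms; global existence of the flow and the Cauchy estimate $d_{g_0}(\psi_t(x),\psi_s(x))\le\int_t^s|V(g(u))|\,du$ then give $\psi_t\to\psi_\infty$ in $C^k$ with no reference to $X^0$ at all. With this choice $\psi_t^*g(t)$ carries the Lie derivative along $(\psi_t)^*X^0$ rather than $X^0$, a point the paper glosses over; but this is immaterial for the convergence statement and for the equivalence with Ricci flow, and it is far cheaper than the cross term you introduce. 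Finally, the paper argues that $\psi_\infty$ is a diffeomorphism in two separate steps --- local invertibility from the fact that $\psi_t^*g(t)$ converges to a genuine metric, and global bijectivity by running the same estimates for the inverse flow $\psi_t^{-1}$ --- whereas your assertion ``$\psi_\infty$ again a diffeomorphism'' needs such a justification on a noncompact $M$.
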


\begin{rk}
If one is interested in weighted $L^p_{f_0}$ estimates with $p\geq 2$, one can actually prove theorem \ref{theo-MRHF} in this setting in the case of gradient steady Ricci solitons.
In fact, such approach has been investigated by Bamler \cite{Bam-Sym} for symmetric spaces : the main tools coming from harmonic analysis are the Hardy-Littlewood maximal local operator together with the Hardy-Littlewood maximal inequality which holds for weights as in the case of gradient steady Ricci solitons, since the potential function has bounded mean oscillation. Nonetheless, this approach is not applicable in the case of expanding gradient Ricci solitons because the potential function has not bounded mean oscillation...
\end{rk}

In a second part, we investigate sufficient geometric assumptions that imply $\lambda$-stability, the assumption $(\Hyp)$ being quite reasonable to assume.

We consider positively curved gradient Ricci solitons first. This choice is actually made with respect to the existing examples. Before stating the results, we describe the strategy to prove $\lambda$-stability. This is nothing more than proving that the bottom of the spectrum of the Lichnerowicz operator acting on symmetric two tensors that are square integrable with respect to the weighted measure $d\mu_f$ is positive. It is well-known that the spectrum can be decomposed into two parts : the essential spectrum and the discrete spectrum. We first analyse the essential spectrum by studying the bottom of the spectrum of the weighted laplacian acting on square integrable (in the weighted sense) functions (propositions \ref{Mun-Wan-Har-Sgs} and \ref{Mun-Wan-Har-Egs}). Then we investigate the non existence of non trivial eigenfunctions of $L$ associated to a non positive eigenvalue by proving a priori exponential decay in the spirit of Agmon \cite{Agm-Dec} and Simon \cite{Sim-Dec} : theorems \ref{Agmon-steady} and \ref{Agmon-expanding}.

\begin{itemize}
\item Concerning gradient Ricci steady solitons, the only known one with positive curvature is the Bryant soliton in dimension greater than $2$. It behaves like a paraboloid. We are able to prove the following :

\begin{theo}\label{spec-sgs-pos-cur}
Let $(M,g,\nabla f)$ be a non flat steady gradient Ricci soliton with non negative curvature operator with 
\begin{eqnarray*}
\lim_{+\infty} R=0\quad;\quad \liminf_{t\rightarrow +\infty}e^{\alpha t}\inf_{f=t}\Ric>0,
\end{eqnarray*}
for some $\alpha\in[0,1)$.
 Then the bottom of the spectrum of $L$ is positive.
\end{theo}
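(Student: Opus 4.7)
I would attack $\spec(L)$ via the decomposition $\spec(L)=\sigma_{\mathrm{ess}}(L)\sqcup\sigma_{\mathrm{disc}}(L)$, controlling each piece with a different tool.

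For the essential spectrum, my starting observation is that under the hypotheses the full curvature tensor decays at infinity: the non-negative curvature operator assumption gives the pointwise estimate $\arrowvert\Rm(g)\arrowvert\leq c_n R$, and $\lim_{+\infty} R=0$ then forces $\arrowvert\Rm(g)\arrowvert\to 0$. Hence the zero-order perturbation $-2\Rm(g)\ast$ is relatively compact with respect to $-\Delta_f$ acting on weighted $L^2$ symmetric $2$-tensors, so by Weyl's theorem $\sigma_{\mathrm{ess}}(L)=\sigma_{\mathrm{ess}}(-\Delta_f)$. I would then invoke Proposition~\ref{Mun-Wan-Har-Sgs}, whose core is the substitution $u=e^{-f/2}v$ together with the soliton identities $\Delta f=R$ and $R+\arrowvert\nabla f\arrowvert^2=C_0$, yielding the weighted Hardy-type bound
\begin{eqnarray*}
\int_M \arrowvert\nabla u\arrowvert^2\,d\mu_f\geq \int_M\Big(\frac{C_0+R}{4}\Big)u^2\,d\mu_f;
\end{eqnarray*}
a Kato inequality transfers this to symmetric $2$-tensors. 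Because the soliton is non-flat we have $C_0>0$, hence $\inf\sigma_{\mathrm{ess}}(L)\geq C_0/4>0$.

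For the discrete spectrum, I would argue by contradiction: suppose $h\neq 0$ in $L^2_{f_0}$ satisfies $Lh=\mu h$ with $\mu\leq 0$. Since $\mu$ lies strictly below $\inf\sigma_{\mathrm{ess}}(L)$, the Agmon--Simon method of Theorem~\ref{Agmon-steady} is available and produces a weighted exponential decay
\begin{eqnarray*}
\int_M \arrowvert h\arrowvert^2 e^{2\phi}\,d\mu_f<+\infty
\end{eqnarray*}
for an Agmon-type weight $\phi$ built intrinsically from $f$ and the effective Schr\"odinger potential $-\mu+2\Rm(g)\ast$. This decay legalises testing the eigenvalue equation against $h$ weighted by $e^{2\phi}$ without boundary contribution at infinity; performing that integration by parts and exploiting the soliton identity $\Hess f=\Ric$ to absorb a Lie derivative of $h$ along $\nabla f$ should produce a non-negative quadratic form in $h$ plus a strictly positive zero-order term driven by the lower bound $\Ric\geq c\,e^{-\alpha f}$. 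This strict positivity, incompatible with $\mu\leq 0$, will force $h\equiv 0$.

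The hardest step is the quantitative Agmon--Simon decay of Theorem~\ref{Agmon-steady}. The natural distance against which exponential decay is measured here is not the Riemannian one but an Agmon-type distance built intrinsically from $f$, and the decay rate produced must be tuned sharply enough that the $\Ric\geq c\,e^{-\alpha f}$ contribution truly dominates both $\arrowvert\mu\arrowvert\int\arrowvert h\arrowvert^2\,d\mu_f$ and the left-over non-negative curvature term $-2\int\langle\Rm(g)\ast h,h\rangle\,d\mu_f$. The hypothesis $\alpha\in[0,1)$ ensures that the decay of $\Ric$ along level sets of $f$ is strictly slower than the growth of the weighted measure $d\mu_f\sim e^f\,d\mu$; the borderline $\alpha=1$ is precisely where the two rates cancel and the integration-by-parts argument would collapse, which is why the strict inequality on $\alpha$ is essential.
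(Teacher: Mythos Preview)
Your treatment of the essential spectrum is essentially the paper's: the curvature decay $\arrowvert\Rm(g)\arrowvert\leq c_nR\to 0$ reduces $\sigma_{\mathrm{ess}}(L)$ to that of $-\Delta_f$, and Proposition~\ref{Mun-Wan-Har-Sgs} then gives $\inf\sigma_{\mathrm{ess}}(L)\geq \lambda(g)/4>0$.

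The discrete-spectrum step, however, has a real gap. Your plan is to test the eigenvalue equation against $he^{2\phi}$, integrate by parts, and hope that the identity $\Hess f=\Ric$ together with $\Ric\geq c\,e^{-\alpha f}$ will manufacture a strictly positive zero-order term. But no such term arises: the quadratic form coming from $\langle Lh,h\rangle$ involves $\langle\Rm(g)\ast h,h\rangle$, not $\Ric$, and a pointwise lower bound on $\Ric$ gives no control on $\Rm(g)\ast h$ for a general symmetric $2$-tensor $h$. The phrase ``absorb a Lie derivative of $h$ along $\nabla f$'' does not point to any concrete identity, and on a steady soliton the Bakry--\'Emery Ricci tensor $\Ric-\Hess f$ vanishes, so Bochner-type identities do not produce the needed positivity either. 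The mechanism by which the Ricci lower bound is supposed to force $h=0$ is simply not there.

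The paper's argument is not integral but a \emph{pointwise} comparison. The key observation you are missing is that $\Li_{\nabla f}(g)=2\Ric$ lies in $\ker L$, so one can slide $h$ against multiples of $\Ric$: on each sublevel set $M_{\leq t}$ one takes the largest $\theta_t$ with $h-\theta_t\Li_{\nabla f}(g)\geq 0$ and applies a Hamilton-type tensor maximum principle (this is exactly where the non-negative curvature operator enters) to force the touching point onto the boundary $M_t$. This yields, after doing the same for $-h$,
\begin{eqnarray*}
\sup_{M_{\leq t}}\arrowvert h\arrowvert\leq \frac{\lambda(g)}{\inf_{M_t}\Ric}\,\sup_{M_t}\arrowvert h\arrowvert.
\end{eqnarray*}
Now the Agmon estimate (Theorem~\ref{Agmon-steady}) gives the $L^{\infty}$ bound $\sup_{M_t}\arrowvert h\arrowvert\leq C_{\epsilon}e^{-(1-\epsilon)t}$, while the hypothesis gives $\inf_{M_t}\Ric\geq c\,e^{-\alpha t}$; choosing $\epsilon<1-\alpha$ sends the right-hand side to $0$, hence $h\equiv 0$. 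So the role of $\alpha<1$ is to leave room between the Agmon $L^{\infty}$ decay rate and the Ricci decay rate in this pointwise Brendle-style comparison, not in a weighted-integral balance as you suggest. The Agmon step is actually the routine part; the Brendle-type maximum principle against $\Li_{\nabla f}(g)$ is the idea your proposal is missing.
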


\begin{rk}
Note that the curvature of the cigar soliton on $\mathbb{R}^2$, discovered by Hamilton, decays precisely as $e^{-f}$. Moreover, the author has shown that this decay is essentially sharp among nonnegatively curved steady gradient Ricci solitons \cite{Der-sgs}. In that sense, the previous theorem can be seen as a gap theorem in terms of curvature decay. Nonetheless, even if theorem \ref{spec-sgs-pos-cur} discards the cigar soliton, Ma and Witt have proved its stability \cite{Ma-Wit} using the special structure of the Ricci flow in dimension $2$.
\end{rk}
We observe that this curvature condition is satisfied by the Bryant soliton too since in this case, $\liminf_{t\rightarrow+\infty}t^2\inf_{f=t}\Ric>0$. Hence,

\begin{coro}\label{thm-2}
The Bryant soliton is dynamically stable. 
\end{coro}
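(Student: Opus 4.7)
The plan is to combine Theorem \ref{spec-sgs-pos-cur} with the main stability result, Theorem \ref{theo-MRHF} (together with Theorem \ref{theo-MRF} if one wants the statement for the Ricci flow itself rather than the harmonic map heat flow). So there are essentially two geometric verifications to perform on the Bryant soliton $(M,g,\nabla f)$.

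First, I would check that the Bryant soliton satisfies the hypothesis $(\Hyp)$ in the steady case. This is classical: the Bryant soliton has positive sectional curvature (in particular $\Ric\geq 0$), is asymptotic to a paraboloid, and its scalar curvature satisfies $R(x)\sim c/r_p(x)$ as $r_p(x)\to +\infty$, so $\lim_{+\infty}R=0$. Boundedness of the full curvature tensor follows from the explicit rotationally symmetric ODE description of the Bryant soliton (or from Shi-type estimates combined with the decay of $R$ and the soliton identity $R+|\nabla f|^2=\mathrm{const}$).

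Second, I would check the hypotheses of Theorem \ref{spec-sgs-pos-cur}. Non-negativity of the curvature operator is again a classical property of the Bryant example. The condition $\lim_{+\infty}R=0$ has already been used. The key point is the Ricci lower bound along level sets of $f$: since on the Bryant soliton $f$ grows linearly in $r_p$ and $\Ric$ decays like $1/r_p^2$, one obtains $\liminf_{t\to+\infty} t^2 \inf_{f=t}\Ric >0$, as remarked by the author. This is much stronger than the required $\liminf_{t\to+\infty} e^{\alpha t}\inf_{f=t}\Ric>0$ for some $\alpha\in[0,1)$: indeed, take $\alpha=0$, so that the condition reduces to $\liminf_{t\to+\infty}\inf_{f=t}\Ric \geq 0$, which is immediate from positivity of $\Ric$ and the fact that $t^2\inf_{f=t}\Ric$ is bounded below by a positive constant.

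From Theorem \ref{spec-sgs-pos-cur}, the bottom of the spectrum of the Lichnerowicz operator $L$ of the Bryant soliton is a positive number $\lambda>0$, so by definition the soliton is strictly stable. Combined with $(\Hyp)$, Theorem \ref{theo-MRHF} then produces, for any perturbation $g$ of $g_0$ with $\|g-g_0\|_{L^2_{f_0}}<\infty$ and $\|g-g_0\|_{\infty}\leq \epsilon(n,k_0,\lambda,I)$, an immortal solution to (MRHF) that converges exponentially fast in every $C^k$ norm to $g_0$; Theorem \ref{theo-MRF} then transports this conclusion to the genuine Ricci flow, up to composition with a one-parameter family of diffeomorphisms converging to some $\psi_\infty$. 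This is the content of dynamical stability of the Bryant soliton. The proof is therefore entirely a matter of verifying the geometric hypotheses; the real mathematical work has already been done in Theorems \ref{theo-MRHF} and \ref{spec-sgs-pos-cur}, and no obstacle is expected beyond recalling known asymptotics of the Bryant soliton.
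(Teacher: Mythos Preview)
Your overall strategy is exactly the paper's: verify $(\Hyp)$ and the hypotheses of Theorem~\ref{spec-sgs-pos-cur} for the Bryant soliton, conclude strict stability, then invoke Theorems~\ref{theo-MRHF} and~\ref{theo-MRF}. The paper itself offers no more than the one-line observation preceding the corollary.

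There is, however, a genuine slip in your verification of the Ricci lower bound. The hypothesis of Theorem~\ref{spec-sgs-pos-cur} reads
\[
\liminf_{t\to+\infty}e^{\alpha t}\inf_{f=t}\Ric>0
\]
with a \emph{strict} inequality. Taking $\alpha=0$ does not work: since $\inf_{f=t}\Ric\sim c/t^2$ on the Bryant soliton, one has $\liminf_{t\to+\infty}\inf_{f=t}\Ric=0$, not $>0$. Your sentence ``the condition reduces to $\liminf_{t\to+\infty}\inf_{f=t}\Ric\geq 0$'' misstates the hypothesis, and the justification you give (that $t^2\inf_{f=t}\Ric$ is bounded below by a positive constant) in fact confirms that the $\alpha=0$ liminf vanishes.

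The fix is immediate: pick any $\alpha\in(0,1)$. From $\inf_{f=t}\Ric\geq c/t^2$ one gets $e^{\alpha t}\inf_{f=t}\Ric\geq c\,e^{\alpha t}/t^2\to+\infty$, so the hypothesis is satisfied with room to spare. This is precisely what the paper has in mind when it records $\liminf_{t\to+\infty}t^2\inf_{f=t}\Ric>0$ just before the corollary. Once this is corrected, your argument is complete and matches the paper's.
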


\item Concerning expanding gradient Ricci solitons, Bryant, in unpublished notes, has also built a one parameter family of rotational symmetric examples on $\mathbb{R}^{n}$ for $n\geq 3$ [Section $5$, Chap. $1$, \cite{Cho-Lu-Ni-I}] asymptotic to the cone $(C(\mathbb{S}^{n-1}),dr^2+(cr)^2g_{\mathbb{S}^{n-1}})$ with $c\in(0,+\infty)$. These examples have positive curvature if and only if $c\in(0,1)$. In that setting, we are able to prove the following :

\begin{theo}\label{lic-op-dis-pos-curv}
Let $(M,g,\nabla f)$ be an expanding gradient Ricci soliton with non negative curvature operator. Then the bottom of the spectrum of $L$ is positive. In particular, it is dynamically stable.
\end{theo}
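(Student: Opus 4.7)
Following the strategy outlined in the introduction, the plan is to bound the bottom of the spectrum of $L$ from below by a positive constant by treating its essential and its discrete parts separately, and then to invoke Theorem \ref{theo-MRHF}.

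First, I would verify that the hypothesis $(\Hyp)$ for expanders holds under the assumption $\Rm \geq 0$. Non-negativity of the curvature operator forces $\Ric \geq 0$, and the soliton equation $\Ric + \Hess f = g/2$ then gives $\Hess f \leq g/2$ and $0 \leq R \leq n/2$. Combining with the standard conservation identity $R + |\nabla f|^2 = f + \mathrm{const}$ produces $|\nabla f|^2 \sim f$ at infinity, hence $f(x) \sim r_p(x)^2/4$, which yields the quadratic lower bound $\liminf_{x \to +\infty} f(x)/r_p(x)^2 > 0$. The uniform bound $\sup_M |\Rm| < \infty$ for expanders with $\Rm \geq 0$ is classical via maximum principle arguments on the evolution of $|\Rm|^2$ along the self-similar Ricci flow, so $(\Hyp)$ is satisfied.

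Next, I would bound the essential spectrum of $L$ from below. Proposition \ref{Mun-Wan-Har-Egs} supplies a positive lower bound $\mu_0 > 0$ on the bottom of the essential spectrum of the weighted scalar Laplacian $-\Delta_f$ on $L^2_f$; via a Bochner-type decomposition this transfers to $-\Delta_f$ acting on symmetric 2-tensors. Since expanders with $\Rm \geq 0$ are asymptotic to a metric cone, $|\Rm|(x) \to 0$ as $r_p(x) \to +\infty$, so the zeroth-order term $-2\Rm \ast$ is a relatively compact perturbation, and Weyl's theorem gives $\sigma_{\mathrm{ess}}(L) \subset [\mu_0, +\infty)$. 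It then remains to rule out discrete eigenvalues $\mu \leq 0$, all of which lie strictly below the essential spectrum. By the Agmon-type decay Theorem \ref{Agmon-expanding}, any such eigentensor $h \in L^2_f$ must decay at a definite weighted exponential rate. Pairing the eigenvalue equation
\[
\mu \int_M |h|^2 \, d\mu_f = \int_M |\nabla h|^2 \, d\mu_f - 2\int_M \langle \Rm \ast h, h\rangle \, d\mu_f
\]
with the pointwise bound $|\langle \Rm \ast h, h\rangle| \leq C|\Rm||h|^2$ and the curvature decay $|\Rm| \to 0$ should then force $h \equiv 0$. Combined with the essential spectrum bound, this yields strict stability with $\lambda_1(L) > 0$, and Theorems \ref{theo-MRHF} and \ref{theo-MRF} give dynamical stability.

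The main obstacle is the discrete spectrum step: unlike the natural action of $\Rm$ on $2$-forms, the action $\Rm \ast$ on symmetric $2$-tensors does not inherit a definite sign from $\Rm \geq 0$ as a curvature operator, so a purely pointwise convexity argument is unavailable. The delicate balance is between the Agmon exponential decay of candidate eigentensors, the quadratic growth of $f$, and the conical decay of $|\Rm|$ provided by the asymptotic cone structure, and carrying it out rigorously may require soliton-specific identities (e.g.\ relating $L$ to the structural tensors $\Ric$ or $\Hess f$) rather than abstract spectral theory alone.
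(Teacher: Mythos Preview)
Your overall strategy---check $(\Hyp)$, control the essential spectrum, then rule out nonpositive discrete eigenvalues via Agmon decay---is the correct architecture, and matches the paper. However, the discrete spectrum step is left as an acknowledged obstacle rather than resolved, and the argument you sketch there does not close: the integrated identity
\[
\mu\int_M|h|^2\,d\mu_f=\int_M|\nabla h|^2\,d\mu_f-2\int_M\langle\Rm\ast h,h\rangle\,d\mu_f
\]
combined only with $|\Rm|\to 0$ at infinity cannot force $h\equiv 0$, because the curvature term on any fixed compact set may well dominate the gradient term, and the Agmon decay of $h$ does nothing to control that interior contribution. You are right that $\Rm\ast$ on $S^2T^*M$ has no sign under $\Rm\geq 0$, so no pointwise convexity saves this.

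The missing idea is a pointwise maximum-principle estimate in the spirit of Brendle. One uses that $\Li_{\nabla f}(g)=2\Ric+g$ lies in $\ker L$, sets $h_\theta:=h-\theta\,\Li_{\nabla f}(g)$, and on each sublevel set $M_{\leq t}:=\{f\leq t\}$ chooses the extremal $\theta_t$ making $h_{\theta_t}\geq 0$ there. If the touching point is on the boundary $M_t$ one reads off $|\theta_t|\leq C\sup_{M_t}|h|$; if it is interior, nonnegative curvature operator and Hamilton's strong maximum principle for systems force $\ker h_{\theta_t}$ to be parallel, hence (no splitting) $h_{\theta_t}\equiv 0$ on $M_{<t}$. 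Either way one obtains
\[
\sup_{M_{\leq t}}|h|\;\leq\;\bigl(1+2\sup_{M_{\leq t}}R\bigr)\sup_{M_t}|h|.
\]
Since the Agmon estimate (Corollary \ref{amorce-decay}) gives $e^{\alpha f}h\in L^\infty$ for all $\alpha\in[0,1)$, the right-hand side tends to $0$ as $t\to\infty$, whence $h\equiv 0$. This is exactly the ``soliton-specific identity'' you anticipated needing; it is the heart of the proof.

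Two smaller remarks. First, with the paper's convention $\Hess f=\Ric+\tfrac{g}{2}$ (not $\Ric+\Hess f=\tfrac{g}{2}$), so $\Ric\geq 0$ gives $\Hess f\geq\tfrac{g}{2}$ and the quadratic growth of $f$ follows; your conclusion is right but the displayed equation has the wrong sign. Second, for expanders satisfying $(\Hyp)$ the paper actually obtains $\sigma_{\mathrm{ess}}(L)=\emptyset$ by conjugating $L$ via $h\mapsto e^{f/2}h$ to a Schr\"odinger operator $-\Delta_g+V$ with $V\sim\tfrac14|\nabla f|^2\to+\infty$; your Weyl-perturbation route would also give a positive lower bound, but the harmonic-oscillator picture is both cleaner and stronger here.
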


In dimension $3$, we are able to only assume positive scalar curvature thanks to an estimate due to Anderson and Chow \cite{And-Cho}. More precisely,

\begin{theo}\label{lic-op-dis-pos-scal-curv}
Let $(M,g,\nabla f)$ be a $3$-dimensional expanding gradient Ricci soliton satisfying $(\Hyp)$ with positive scalar curvature with $$\liminf_{x\rightarrow+\infty} e^{\alpha f(x)}R(x)>0,$$ for some $\alpha\in[0,1)$. Then the bottom of the spectrum of $L$ is positive. In particular, it is dynamically stable.
\end{theo}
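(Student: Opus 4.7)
The plan is to apply the general strategy outlined between Theorems \ref{spec-sgs-pos-cur} and \ref{lic-op-dis-pos-curv}: analyze the essential spectrum of the Lichnerowicz operator $L$ by means of the weighted Laplacian bounds of Proposition \ref{Mun-Wan-Har-Egs}, and then forbid non-positive discrete $L^2_f$-eigenvalues by means of the Agmon-Simon type exponential decay provided by Theorem \ref{Agmon-expanding}. The novelty compared with the non-negatively curved case (Theorem \ref{lic-op-dis-pos-curv}) is the Anderson-Chow pinching estimate \cite{And-Cho}, which in dimension $3$ bounds the action of the curvature operator on symmetric $2$-tensors in terms of the scalar curvature alone.

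More precisely, Anderson-Chow's estimate asserts that, at any point where $R>0$, the pointwise inequality $2\langle \Rm\ast h,h\rangle\le R|h|^2$ holds for every symmetric $2$-tensor $h$. Combined with Kato's inequality $|\nabla h|^2\ge |\nabla|h||^2$, this reduces the positivity of the Rayleigh quotient of $L$ to that of the scalar Schr\"odinger-type operator $P:=-\Delta_f-R$ acting on functions in $L^2_f$:
\[
\int_M\langle Lh,h\rangle\,d\mu_f \;\ge\; \int_M\bigl(|\nabla|h||^2-R|h|^2\bigr)\,d\mu_f.
\]
To control the essential spectrum of $P$, I would exploit the soliton identity $\Delta_f R=-R-2|\Ric|^2$ (valid on any expanding gradient Ricci soliton), which exhibits $R$ itself as a positive supersolution: $(-\Delta_f-R)R=2|\Ric|^2+R(1-R)$. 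Plugging in the dimension-3 Cauchy-Schwarz inequality $|\Ric|^2\ge R^2/3$ and combining Barta's principle with Persson's characterization of the essential spectrum, the weighted spectral bound from Proposition \ref{Mun-Wan-Har-Egs}, and the geometric assumption $(\Hyp)$, one should deduce that $\lambda^{\mathrm{ess}}_1(P)>0$.

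For the absence of non-positive $L^2_f$-eigenvalues, I would argue by contradiction: if $Lh=\mu h$ with $\mu\le 0$ and $h\in L^2_f$, Theorem \ref{Agmon-expanding} gives pointwise exponential decay of $h$ at infinity at a rate governed precisely by $\alpha<1$, which legitimizes integration by parts in the weighted $L^2_f$-pairing. The resulting Rayleigh quotient inequality, combined with the previous step, forces $h\equiv 0$. The main obstacle lies in the essential spectrum step: the naive application of Barta's principle with the test function $\phi=R$ yields only $(-\Delta_f-R)R\ge 0$, i.e.\ non-negativity, in any dimension, so strict positivity really demands that the dimension-3 refinement $|\Ric|^2\ge R^2/3$ be combined with the decay information encoded in $\liminf e^{\alpha f}R>0$; this lower bound on $R$ is moreover precisely what is needed to match the decay rate entering Theorem \ref{Agmon-expanding} and close the integration-by-parts step.
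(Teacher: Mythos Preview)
Your approach rests on a misstatement of the Anderson--Chow estimate. The pointwise inequality $2\langle \Rm\ast h,h\rangle\le R|h|^2$ is \emph{not} what \cite{And-Cho} proves, and in fact it is false on $3$-manifolds with $R>0$ once some sectional curvature is negative. Concretely, using the $3$-dimensional decomposition of $\Rm$ in terms of $\Ric$, one computes for trace-free diagonal $h$ that
\[
R|h|^2-2\langle\Rm\ast h,h\rangle=2\sum_{i}K_{jk}\,h_{ii}^2,
\]
where $\{i,j,k\}=\{1,2,3\}$ and $K_{jk}$ denotes the sectional curvature of the plane $e_j\wedge e_k$. Taking $K_{12}=2$, $K_{13}=K_{23}=-1/2$ (so $R=2>0$) and $h=\mathrm{diag}(1,-1,0)$ gives a negative value. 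Hence your reduction to the scalar operator $P=-\Delta_f-R$ via Kato collapses at the very first step, and the subsequent Barta/Persson analysis of $P$, even if completed, would not control $\lambda_1(L)$.

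What Anderson--Chow actually establish is the nonnegativity of the quadratic
\[
|h|^2|\Ric|^2-2R\,\tr h\,\langle\Ric,h\rangle+2R\,\langle\Ric,h^2\rangle+\tfrac{R^2}{2}\bigl((\tr h)^2-|h|^2\bigr)\ge 0,
\]
which the paper uses (Theorem~\ref{And-Cho-est}) to obtain a \emph{maximum principle for the ratio} $|h|/R$: if $Lh=\lambda h$ with $\lambda\le 1$ then $\sup_{f\le t}|h|/R=\sup_{f=t}|h|/R$. The paper's proof then proceeds differently from yours: since the essential spectrum of $L$ is empty under $(\Hyp)$ in the expanding case, Corollary~\ref{amorce-decay} gives $|h|\le C e^{-\beta f}$ for every $\beta<1$; combining with the hypothesis $R\ge c\,e^{-\alpha f}$ for some fixed $\alpha<1$ and choosing $\beta\in(\alpha,1)$ forces $\sup_{f=t}|h|/R\to 0$, whence $h\equiv 0$. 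This in fact yields the stronger conclusion $\sigma_{\mathrm{disc}}(L)\subset(1,+\infty)$. No integral Rayleigh-quotient argument is needed; the mechanism is a pointwise comparison of $|h|$ with $R$ through the maximum principle, not a spectral reduction to a scalar Schr\"odinger operator.
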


\end{itemize}
\begin{rk}
Chih-Wei Chen and the author proved that all expanding gradient Ricci solitons with quadratic curvature decay are asymptotically conical : the asymptotic cone is a metric cone whose section is a smooth compact manifold endowed with a $C^{1,\alpha}$ metric. See \cite{Che-Der} for more details. For instance, the assumption of theorem \ref{lic-op-dis-pos-scal-curv} on the asymptotical behaviour of the scalar curvature is satisfied if the section of the asymptotic cone is a smooth positively curved sphere with curvature greater than $1$ : in that case, $\liminf_{x\rightarrow+\infty}r_p^2(x)R(x)$ is the minimum of the scalar curvature restricted to the section which is positive. 
 \end{rk}

\begin{rk}
According to Siepmann \cite{Sie-PHD}, the condition on the decay of the curvature in theorem \ref{lic-op-dis-pos-scal-curv} is essentially sharp. Indeed, Siepmann proved that the Ricci curvature decay as $e^{-\alpha f}$ for any $\alpha\in[0,1)$ in case the asymptotic cone of an expanding gradient Ricci soliton is Ricci flat.
\end{rk}

What about the stability of the Bryant expanding gradient Ricci solitons with negative curvature, i.e. the ones whose asymptotic cones are  $(C(\mathbb{S}^{n-1}),dr^2+(cr)^2g_{\mathbb{S}^{n-1}})$ with $c\in(1,+\infty)$ ?

Actually, it seems that the stability of expanders is intimately related to rigidity : Chodosh \cite{cho-EGS} proved that any positively curved expanding gradient Ricci solitons asymptotic to $(C(\mathbb{S}^{n-1}),dr^2+(cr)^2g_{\mathbb{S}^{n-1}})$ with $c\in(0,1)$ are rotationally symmetric and theorem \ref{lic-op-dis-pos-curv} implies their stability. The principle ingredient here is the maximum principle for symmetric $2$-tensors due to Hamilton : this does not hold in case of negative curvature. Nonetheless, we are able to prove that if the angle $c$ is close enough to $1$, then it is stable, the tool employed here is a Bochner method which can be seen as the counterpart of pointwise estimates due to maximum principles. There is no doubt that this method leads to rigidity as well : \cite{Der-Mod}.

\begin{theo}\label{lic-op-dis-Bochner}
Let $(M,g,\nabla f)$ be an expanding gradient Ricci soliton satisfying $(\Hyp)$ such that
\begin{eqnarray*}
\quad \inf_MR+\frac{n}{2}-2\|\Rm(g)\|_{L^{\infty}}>0.
 \end{eqnarray*}
  Then the bottom of the spectrum of $L$ is positive. In particular, it is dynamically stable.
\end{theo}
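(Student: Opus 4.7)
My plan is to produce the explicit positive constant $\lambda := \inf_M R + n/2 - 2\|\Rm(g)\|_{L^{\infty}}$ directly, by showing the integrated Lichnerowicz bound
\begin{eqnarray*}
\int_M \langle Lh,h\rangle d\mu_f \geq \lambda \int_M |h|^2 d\mu_f
\end{eqnarray*}
for every compactly supported symmetric $2$-tensor $h$. Combined with $(\Hyp)$, theorem \ref{theo-MRHF} will then upgrade strict stability to dynamical stability. Since $\int_M \langle Lh,h\rangle d\mu_f = \int_M |\nabla h|^2 d\mu_f - 2 \int_M \langle \Rm(g)\ast h,h\rangle d\mu_f$ and $|\langle \Rm(g)\ast h,h\rangle| \leq \|\Rm(g)\|_{L^{\infty}}|h|^2$ pointwise, the whole theorem reduces to the weighted Poincaré-type inequality for tensors
\begin{eqnarray}
\int_M |\nabla h|^2 d\mu_f \geq \int_M \left(R + \tfrac{n}{2}\right) |h|^2 d\mu_f. \label{bochner-tensor}
\end{eqnarray}

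To obtain (\ref{bochner-tensor}), I would take the trace of the expander soliton equation $\Hess f = \Ric + g/2$ to get $\Delta f = R + n/2$, hence $\Delta_f f = R + n/2 + |\nabla f|^2$. Integration by parts for the weighted measure $d\mu_f$ (legitimate because $h$ is compactly supported) yields
\begin{eqnarray*}
2\int_M h^{ij}\nabla^k f\,\nabla_k h_{ij}\,d\mu_f = \int_M \langle \nabla f,\nabla |h|^2\rangle d\mu_f = -\int_M \left(R + \tfrac{n}{2} + |\nabla f|^2\right)|h|^2 d\mu_f.
\end{eqnarray*}
Cauchy-Schwarz together with Young's inequality provide the pointwise estimate $|2 h^{ij}\nabla^k f\,\nabla_k h_{ij}| \leq 2|h||\nabla f||\nabla h| \leq |\nabla h|^2 + |\nabla f|^2 |h|^2$, so comparing the two expressions and cancelling the common $|\nabla f|^2 |h|^2$ term delivers exactly (\ref{bochner-tensor}).

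I do not foresee a serious obstacle. The virtue — and the novelty compared to theorems \ref{spec-sgs-pos-cur}--\ref{lic-op-dis-pos-scal-curv} — is that this is a pure Bochner/Young argument: it extracts the decisive $n/2$ gain directly from the trace of the expander equation, so it is insensitive to the sign of the sectional curvature and, as emphasized in the introduction, remains valid in the negatively curved regime where Hamilton's maximum principle for symmetric $2$-tensors breaks down. The only routine verifications are that under $(\Hyp)$ both $\|\Rm(g)\|_{L^{\infty}}$ and $\inf_M R$ are finite, so that $\lambda$ is indeed a well-defined positive real, and that (\ref{bochner-tensor}) extends from compactly supported $h$ to the natural weighted Sobolev closure by density, ensuring a genuine spectral gap for $L$.
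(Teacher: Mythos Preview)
Your proof is correct and follows essentially the same Bochner strategy as the paper: both arguments hinge on the weighted Poincar\'e inequality $\int_M(R+\tfrac{n}{2})|h|^2\,d\mu_f \le \int_M|\nabla h|^2\,d\mu_f$ combined with the crude bound $|\langle\Rm\ast h,h\rangle|\le\|\Rm\|_{L^\infty}|h|^2$. The only cosmetic difference is that the paper obtains this Poincar\'e inequality by applying the Kato inequality $|\nabla|h||\le|\nabla h|$ and then invoking Munteanu--Wang's scalar Hardy inequality (proposition~\ref{Mun-Wan-Har-Egs}), whereas you derive it directly for tensors via $\Delta_f f = R+\tfrac{n}{2}+|\nabla f|^2$, integration by parts, and Young --- a slightly cleaner route that bypasses Kato.
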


\begin{rk}
Note that this condition is satisfied if the curvature operator is bounded by a sufficiently small constant, only depending on the dimension $n$ and this is not a vacuous condition since Ricci expanders are normalized by their very definition. This is in sharp contrast with the shrinking case : Munteanu and Wang \cite{Mun-Wan-Cur} have shown that if the Ricci curvature of a shrinking gradient Ricci soliton is small enough then it is isometric to Euclidean space.
\end{rk}

\begin{rk}
One can ask if there is a finite threshold depending on the angle $c>1$ of the Bryant expanding gradient Ricci solitons concerning their rigidity/stability ?
\end{rk}
\textbf{Acknowledgements.}
The author would like to thank Gilles Carron for his comments on a first draft. This paper benefited from numerous conversations with Peter Topping, Panagiotis  Gianniotis and Felix Schulze.\\

The author is supported by the EPSRC on a Programme Grant entitled ‘Singularities of Geometric Partial Differential Equations’ (reference number EP/K00865X/1).

\section{The linear case}
This section can be skipped for a first reading. This section will not be used for the sequel. The purpose of this section is to give some feeling to the reader with these flows. We begin with a general computation concerning the evolution of the weighted divergence of a solution to the linearized case of (MRHF).

\begin{theo}
Let $(M,g,\nabla f)$ be a gradient Ricci soliton, i.e. 
\begin{eqnarray*}
\Ric+\frac{\epsilon g}{2}=\nabla^2 f,
\end{eqnarray*}
 with $\epsilon\in\{-1,0,1\}$.
Let $h$ be a symmetric $2$-tensor satisfying the heat equation $\partial_t h=\Delta_fh+2\Rm\ast h$.
Then the vector field $\div_f(h)$ satisfies 
\begin{eqnarray*}
\left(\partial_t-\Delta_f+\frac{\epsilon}{2}\right)(\div_f(h))=0.
\end{eqnarray*}
\end{theo}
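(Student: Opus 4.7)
Since $g$ and $f$ do not depend on $t$, the weighted divergence $\div_f$ commutes with $\partial_t$, so applying it to the given heat equation for $h$ produces
\[
\partial_t(\div_f h) = \div_f(\Delta_f h) + 2\,\div_f(\Rm(g)\ast h).
\]
The theorem therefore reduces to the purely static (Weitzenb\"ock-type) identity
\[
\div_f(\Delta_f h) + 2\,\div_f(\Rm(g)\ast h) - \Delta_f(\div_f h) = -\frac{\epsilon}{2}\,\div_f h.
\]

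The plan is to expand the left hand side in local coordinates and match terms by hand. I would first decompose $\Delta_f = \Delta + \nabla_{\nabla f}$ and treat the commutators $[\div,\Delta]h$ and $[\div,\nabla_{\nabla f}]h$ separately. For a symmetric $2$-tensor, two applications of the Ricci commutation formula $[\nabla_i,\nabla_j]T = R_{ij}\ast T$ produce, from $[\div,\Delta]h$, a contribution of the form $\Ric\ast \div h$, two identical contributions of the form $\Rm\ast\nabla h$ (symmetric under the two successive commutations), and a divergence-of-Ricci piece which the contracted second Bianchi identity $\nabla^p R_{pj} = \tfrac{1}{2}\nabla_j R$ reduces to a scalar-curvature gradient. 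The commutator $[\div,\nabla_{\nabla f}]h$, together with the conversion $\div_f h = \div h + \iota_{\nabla f}h$ coming from the weighted measure, generates a $\Hess(f)\ast h$ piece plus lower-order pieces proportional to $\div_f h$.

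Next I would expand $2\,\div_f(\Rm(g)\ast h)$ via the product rule. The $\Rm\ast\nabla h$ portion cancels the analogous $\Rm\ast\nabla h$ terms from $[\div,\Delta]h$, while the $\nabla\Rm\ast h$ portion is simplified by the second Bianchi identity $\nabla^i R_{ipqj} = \nabla_q R_{pj} - \nabla_j R_{pq}$ and combines with the $\nabla\Ric$ terms above. What survives is a purely tensorial combination of Ricci and Hessian contractions on $h$ and on $\div_f h$. Here the soliton equation $\Ric = \nabla^2 f - \tfrac{\epsilon}{2}g$ — equivalently, the structural fact that $\nabla^2 f - \Ric$ is a \emph{scalar} multiple of $g$ — collapses the remainder to precisely $-\tfrac{\epsilon}{2}\,\div_f h$. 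Along the way one also uses the differentiated soliton identity $\nabla_k R_{ij} - \nabla_j R_{ik} = R_{kjil}\nabla^l f$, obtained by commuting two covariant derivatives of the soliton equation, in order to match $\nabla\Ric$ with $\nabla^3 f$ contributions.

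The main obstacle is the careful index bookkeeping: each commutator and each Bianchi identity produces several curvature-type tensors, and one must check that the entire sum is a scalar multiple of $\div_f h$ with no residual Ricci- or Hessian-type piece. The fact that this scalar equals $-\epsilon/2$, independently of dimension or background curvature, is not accidental — it is forced by the precise normalization of the soliton equation, and is the reason the identity holds on any gradient Ricci soliton but fails on a generic Riemannian manifold.
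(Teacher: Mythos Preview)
Your approach is correct and essentially the same as the paper's: the paper splits $\div_f h = \div h + h(\nabla f)$ and computes the evolution of each piece separately, which amounts to exactly the commutator calculation you outline, using the same ingredients (Ricci commutation, first and second Bianchi, and the soliton identities $\div\Ric + \Ric(\nabla f)=0$ and $\div\Rm = \Rm(\cdot,\cdot,\nabla f,\cdot)$). The only difference is bookkeeping---you group terms as $[\div,\Delta]$, $[\div,\nabla_{\nabla f}]$, etc., while the paper tracks $\partial_t(\div h)$ and $\partial_t(h(\nabla f))$ in parallel---but the underlying identities and cancellations are identical.
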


\begin{proof}
First, note that
\begin{eqnarray*}
\partial_t(h(\nabla f))_i&=&(\partial_th_{ik})\nabla_kf=\Delta_fh_{ik}\nabla_kf+2(\Rm\ast h)_{ik}\nabla_kf\\
&=&\Delta(h_{ik}\nabla_kf)+\nabla_jh_{ik}\nabla_kf\nabla_jf+2(\Rm\ast h)_{ik}\nabla_kf\\
&&-2\nabla_lh_{ik}\nabla_l\nabla_kf-h_{ik}\Delta(\nabla_kf)\\
&=&\Delta(h(\nabla f)_i)+\nabla_j(h(\nabla f)_i)\nabla_jf-h_{ik}\nabla_j\nabla_kf\nabla_jf+2(\Rm\ast h)_{ik}\nabla_kf\\
&&-2\nabla_lh_{ik}\nabla_l\nabla_kf-h_{ik}\Delta(\nabla_kf)\\
&=&\Delta_f(h(\nabla f)_i)-\frac{\epsilon}{2}h(\nabla f)_i-2h(\Ric(\nabla f))_i+2(\Rm\ast h)_{ik}\nabla_kf\\
&&-2\nabla_lh_{ik}\nabla_l\nabla_kf-h_{ik}(\nabla_k\Delta f+\Ric(\nabla f)_k)\\
&=&\Delta_f(h(\nabla f)_i)-\frac{\epsilon}{2}h(\nabla f)_i-2h(\Ric(\nabla f))_i+2(\Rm\ast h)_{ik}\nabla_kf\\
&&-\epsilon\div h-2\nabla_lh_{ik}\Ric_{lk}-h(\nabla R)_i,
\end{eqnarray*}
Hence, by the soliton identities \ref{sol-id-egs},
\begin{eqnarray*}
\partial_t(h(\nabla f))=\Delta_f(h(\nabla f))-\frac{\epsilon}{2}h(\nabla f)+2(\Rm\ast h)(\nabla f)-\epsilon\div h-2\nabla h\ast\Ric,
\end{eqnarray*}
where $(\Rm\ast h)(\nabla f)_i:=\Rm_{ijlk}h_{jl}\nabla_kf$ and $(\nabla h\ast\Ric)_i=\nabla_lh_{ik}\Ric_{lk}$.

Now,
\begin{eqnarray*}
\partial_t\div h =\div(\Delta_fh+2\Rm\ast h).
\end{eqnarray*}
First of all,
\begin{eqnarray*}
\div(\nabla_{\nabla f}h)&=&\nabla_i(\nabla_kf\nabla_kh_{\cdot i})=\nabla_i\nabla_kf\nabla_kh_{\cdot i}+\nabla_kf\nabla_i\nabla_kh_{\cdot i}\\
&=&\frac{\epsilon\div h}{2}+\Ric_{ik}\nabla_kh_{\cdot i}+\nabla_{\nabla f}(\div h)-\nabla_kf(\Rm^p_{ik\cdot}h_{pi} +\Rm^p_{iki}h_{p\cdot})\\
&=&\frac{\epsilon\div h}{2}+\Ric\ast\nabla h+\nabla_{\nabla f}(\div h)+h(\Ric(\nabla f))-(\Rm\ast h)(\nabla f)
\end{eqnarray*}
Also,
\begin{eqnarray*}
\div\Delta h&=&\nabla_i\nabla_k\nabla_kh_{\cdot i}=\Delta(\div h)-\nabla_k(\Rm^p_{ik\cdot}h_{pi} +\Rm^p_{iki}h_{p\cdot})+[\nabla_i,\nabla_k]\nabla_kh_{\cdot i}\\
&=&\Delta(\div h)+\nabla_k(\Ric_{kp}h_{p\cdot}-(\Rm\ast h)_{k\cdot})+[\nabla_i,\nabla_k]\nabla_kh_{\cdot i}\\
&=&\Delta(\div h)+h(\div\Ric)+\Ric\ast\nabla h-\div(\Rm\ast h)+[\nabla_i,\nabla_k]\nabla_kh_{\cdot i}.
\end{eqnarray*}
Note that 
\begin{eqnarray*}
[\nabla_i,\nabla_k]\nabla_kh_{\cdot i}&=&-\Rm^p_{ikk}\nabla_ph_{\cdot i}-\Rm^p_{iki}\nabla_kh_{\cdot p}-\Rm^p_{ik\cdot}\nabla_kh_{pi}\\
&=&-\Ric_{ip}\nabla_ph_{\cdot i}+\Ric_{kp}\nabla_kh_{\cdot p}-\Rm^p_{ik\cdot}\nabla_kh_{pi}\\
&=&-\Rm^p_{ik\cdot}\nabla_kh_{pi},
\end{eqnarray*}
and, by the soliton identities,
\begin{eqnarray*}
\div(\Rm\ast h)&=&\nabla_i(\Rm_{\cdot kli}h_{kl})=\div(\Rm)_{\cdot kl}h_{kl}+\Rm_{\cdot kli}\nabla_ih_{kl}\\
&=&\Rm_{\cdot kjl}\nabla_jfh_{kl}+\Rm_{\cdot kli}\nabla_ih_{kl}\\
&=&-(\Rm\ast h)(\nabla f)+\Rm_{\cdot kli}\nabla_ih_{kl}.
\end{eqnarray*}
Therefore, $[\nabla_i,\nabla_k]\nabla_kh_{\cdot i}=-\Rm^p_{ik\cdot}\nabla_kh_{pi}.$\\

Finally, again, by the soliton identities and the first Bianchi identity,
\begin{eqnarray*}
\partial_t(\div_fh)&=&\Delta_f(\div_fh)-\frac{\epsilon\div_fh}{2}+h(\div \Ric+\Ric(\nabla f))\\
&&+\div(\Rm\ast h)+[\nabla_i,\nabla_k]\nabla_kh_{\cdot i}+\Rm\ast h(\nabla f)\\
&=&\Delta_f(\div_fh)-\frac{\epsilon\div_fh}{2}.
\end{eqnarray*}

\end{proof}

With the help of the Gronwall lemma, one can prove the following :

\begin{theo}
Let $(M,g,\nabla f)$ be a gradient Ricci soliton, i.e. 
\begin{eqnarray*}
\Ric+\frac{\epsilon g}{2}=\nabla^2 f,
\end{eqnarray*}
 with $\epsilon\in\{-1,0,1\}$ which is strictly stable.\\
  
Let $(h_t)_{t\in[0,T]}$, with $T>0$, be a one-parameter family of $2$-symmetric tensor satisfying the heat equation $\partial_t h_t=\Delta_fh_t+2\Rm\ast h_t$ such that, for any $t\in[0,T]$,
\begin{eqnarray*}
h_t\in L^2_{f}\quad\mbox{and}\quad \div_{f}h_0\in L^2_{f}.
\end{eqnarray*}

Then $h_t$ and the vector field $\div_fh_t$ satisfy for any $t\in[0,T]$,
\begin{eqnarray*}
\|\div_fh_t\|^2_{L^2_{f}}&\leq& e^{-\epsilon t}\|\div_fh_0\|^2_{L^2_{f}}\\
\|h_t\|^2_{L^2_{f}}&\leq&\left(\| h_0\|^2_{L^2_{f}}+\|\div_fh_0\|^2_{L^2_{f}}\int_0^te^{2(\lambda-\epsilon)u}du\right)e^{-2\lambda t}
\end{eqnarray*}
\end{theo}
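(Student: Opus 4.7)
The plan is to differentiate the squared weighted $L^2_f$ norms of $h_t$ and $\div_{f} h_t$, perform integration by parts in the measure $d\mu_f$, invoke the strict stability assumption, and conclude via Gronwall's inequality.

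First I would handle the divergence term. Set $u_t := \div_{f} h_t$. By the previous theorem, $u_t$ solves $\partial_t u_t = \Delta_f u_t - \frac{\epsilon}{2} u_t$. Pairing with $u_t$ and using the self-adjointness of $\Delta_f$ in $L^2_f$, I obtain
\begin{eqnarray*}
\frac{d}{dt}\|u_t\|^2_{L^2_f}=-2\int_M |\nabla u_t|^2\, d\mu_f-\epsilon\|u_t\|^2_{L^2_f}\leq -\epsilon\|u_t\|^2_{L^2_f},
\end{eqnarray*}
and Gronwall's lemma yields the first claim.

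Second, for $\|h_t\|^2_{L^2_f}$, I would use the hypothesis $\partial_t h_t=\Delta_f h_t+2\Rm\ast h_t=-Lh_t$ together with integration by parts to compute
\begin{eqnarray*}
\frac{d}{dt}\|h_t\|^2_{L^2_f}=-2\int_M \langle Lh_t,h_t\rangle\, d\mu_f \leq -2\lambda \|h_t\|^2_{L^2_f},
\end{eqnarray*}
where the last inequality is strict stability applied to $h_t$, extended from compactly supported tensors to $h_t\in L^2_f$ by a density argument. Gronwall's lemma then gives $\|h_t\|^2_{L^2_f}\leq e^{-2\lambda t}\|h_0\|^2_{L^2_f}$; the stated bound follows a fortiori since the added quantity $e^{-2\lambda t}\|\div_{f}h_0\|^2_{L^2_f}\int_0^t e^{2(\lambda-\epsilon)u}du$ is non-negative. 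Alternatively, one can run a coupled Duhamel argument where the pointwise inequality $\frac{d}{dt}\|h_t\|^2_{L^2_f}\leq -2\lambda\|h_t\|^2_{L^2_f}+\|\div_f h_t\|^2_{L^2_f}$ is combined with the already-proven decay of $\|\div_f h_t\|^2_{L^2_f}$, which produces exactly the stated Duhamel-type form.

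The main technical obstacle is justifying the integration by parts in the non-compact weighted setting and extending the strict stability inequality from compactly supported tensors to $h_t\in L^2_f$. I would handle this by the standard cutoff procedure: approximate with $\chi_R h_t$, where $\chi_R$ is a smooth cutoff on $B(p,R)$ with $|\nabla\chi_R|\leq C/R$, and check that the error integrals arising from moving $\chi_R$ past $\nabla$ vanish as $R\to+\infty$ thanks to the $L^2_f$-integrability of $h_t$ and $\nabla h_t$. The bound on $\nabla h_t$ in $L^2_f$ follows from parabolic regularity applied to the heat equation $\partial_t h_t=-Lh_t$, using the assumed $L^2_f$ control on $h_t$ and the bounded curvature hypothesis from $(\Hyp)$. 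The same regularity guarantees that $t\mapsto\|h_t\|^2_{L^2_f}$ and $t\mapsto\|\div_f h_t\|^2_{L^2_f}$ are differentiable with derivatives equal to the pointwise integrals computed above, so that Gronwall can be applied rigorously.
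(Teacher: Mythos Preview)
Your proposal is correct and matches the paper's approach: the paper's proof is the single sentence ``With the help of the Gronwall lemma, one can prove the following,'' and you have carried this out in detail by differentiating the $L^2_f$-norms, integrating by parts, invoking the evolution equation for $\div_f h_t$ from the preceding theorem together with strict stability, and applying Gronwall. Your observation that the direct Gronwall argument already yields the sharper bound $\|h_t\|^2_{L^2_f}\leq e^{-2\lambda t}\|h_0\|^2_{L^2_f}$, so that the divergence contribution in the stated estimate is in fact superfluous, is valid since strict stability as defined in the paper applies to all compactly supported symmetric $2$-tensors rather than only divergence-free ones.
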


\begin{rk}
It seems that, in the case of the (MRHF) in the expanding case, the $L^2_{f}$-decay of $\div_{f_0}h_t$ is independent of $\lambda$. Nonetheless, the author was not able to prove it in general.
\end{rk}

\section{Existence}
Fix now once for all a background gradient Ricci soliton $(M,g_0,\nabla^0f_0)$ satisfying $(\Hyp)$.\\
 
We begin by investigating the $C^0$-evolution of the difference $g(t)-g_0$ : 

\begin{lemma}\label{pt-est}
Let $g(\cdot)$ be a solution to the Ricci harmonic map heat flow on $(0,T)$ which is $\epsilon$-close to $g_0$. Then,
\begin{eqnarray*}
\frac{\partial}{\partial t}\arrowvert h\arrowvert^2&\leq& g^{-1}\ast\nabla^0\nabla^0\arrowvert h\arrowvert^2+X^0\cdot\arrowvert h\arrowvert^2-2(1-\epsilon)\arrowvert \nabla^0h\arrowvert^2\\
&&+4\Rm(g_0)(h,h)+4\epsilon\arrowvert\Rm(g_0)(h,h)\arrowvert,
\end{eqnarray*}
where 
\begin{eqnarray*}
&&h:=g-g_0,\\ 
&&g^{-1}\ast\nabla^0\nabla^0\arrowvert h\arrowvert^2:=g^{ij}\nabla^0_i\nabla^0_j\arrowvert h\arrowvert^2,\\
&&\Rm(g_0)(h,h):=\Rm(g_0)_{iklj}h_{kl}h_{ij}.
\end{eqnarray*}
\end{lemma}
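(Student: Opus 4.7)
The plan is to derive the evolution equation satisfied by $h := g-g_0$ directly from (MRHF), convert it to reaction--diffusion form via the DeTurck identity, then compute $\partial_t|h|^2$ and absorb the remaining nonlinear terms using the pointwise closeness $\|h\|_\infty \leq \epsilon$.

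Step 1. Since $g_0$ is a gradient Ricci soliton, the identity $-2\Ric(g_0)-\epsilon g_0+\Li_{X^0}(g_0)=0$ holds; by \eqref{de-turck-vect}, one also has $V(g_0)=0$. Subtracting this vanishing quantity from the (MRHF) for $g(t)$ and using the linearity of $\Li_{X^0}$ in the metric argument, I get
\[
\partial_t h = -2\bigl(\Ric(g)-\Ric(g_0)\bigr)-\epsilon h+\Li_{X^0}(h)+\Li_{V(g)}(g).
\]

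Step 2. The classical Ricci--DeTurck identity---the very reason for introducing $V(g)$---expands
\[
-2\bigl(\Ric(g)-\Ric(g_0)\bigr)_{ij}+\bigl(\Li_{V(g)}(g)\bigr)_{ij}=g^{ab}\nabla^0_a\nabla^0_b h_{ij}+(\Rm(g_0)\ast h)_{ij}+Q(g,g^{-1},\nabla^0 h)_{ij},
\]
where $Q$ is a quadratic form in $\nabla^0 h$ with coefficients controlled by $g$ and $g^{-1}$, and $\Rm(g_0)\ast h$ is a linear-in-$h$ expression built from $\Rm(g_0)$, $g$, $g_0$ and their inverses. On the other hand, the soliton equation $\nabla^0 X^0=\Ric(g_0)+(\epsilon/2)\,g_0$ gives
\[
\bigl(\Li_{X^0}(h)\bigr)_{ij}=X^{0,k}\nabla^0_k h_{ij}+\epsilon h_{ij}+h_{kj}\Ric(g_0)^k{}_i+h_{ik}\Ric(g_0)^k{}_j,
\]
so the $\epsilon h$ from $-\epsilon h$ and the one from $\Li_{X^0}(h)$ cancel exactly, and the surviving linear-in-$h$ curvature pieces will, at $g=g_0$, recombine into the usual Lichnerowicz pairing.

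Step 3. Test against $h$ using the metric $g_0$: $\partial_t|h|^2=2\langle h,\partial_t h\rangle_{g_0}$. The second-order piece yields
\[
2g^{ab}\langle h,\nabla^0_a\nabla^0_b h\rangle_{g_0}=g^{ab}\nabla^0_a\nabla^0_b|h|^2-2g^{ab}\langle\nabla^0_a h,\nabla^0_b h\rangle_{g_0};
\]
the drift pairs to $X^0\cdot|h|^2$; and the undifferentiated curvature contribution evaluated at $g=g_0$ is the Lichnerowicz pairing $4\Rm(g_0)(h,h)$ appearing in the definition of $L$. Smallness $\|h\|_\infty\leq\epsilon$ absorbs the remaining discrepancies: $g^{ab}-g_0^{ab}=O(\epsilon)$ produces the $(1-\epsilon)$ coefficient in front of $|\nabla^0 h|^2$; the scalar $\langle h,Q(\nabla^0 h)\rangle$ is bounded pointwise by $|h|\,|\nabla^0 h|^2\leq\epsilon|\nabla^0 h|^2$ and folds into the same budget; and the gap between the curvature pairing at $g$ and at $g_0$ is pointwise bounded by $\epsilon|\Rm(g_0)|\,|h|^2$, which yields the $4\epsilon|\Rm(g_0)(h,h)|$ error term after enlarging the universal constant.

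The main obstacle is the algebraic bookkeeping in Step 2: one must verify that the linear-in-$h$ curvature pieces produced by the DeTurck expansion of $-2\Ric(g)+\Li_{V(g)}(g)$, together with the $\Ric(g_0)$-contributions coming from $\Li_{X^0}(h)$, assemble into exactly the Lichnerowicz pairing $4\Rm(g_0)(h,h)$ rather than leaving a leftover $\Ric(g_0)$-type term; and one must check that every discrepancy caused by $g\neq g_0$ is sharply of the form $O(\epsilon)\cdot(\text{leading term})$, so that the $(1-\epsilon)$ coefficient in front of $|\nabla^0 h|^2$ is genuinely attained.
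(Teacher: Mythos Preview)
Your plan is correct and matches the paper's approach: expand (MRHF) via the DeTurck/Shi identity to get $\partial_t g_{ij}=g^{ab}\nabla^0_a\nabla^0_b g_{ij}+(\text{curvature})+(\nabla^0 g)^{*2}+\Li_{X^0}(g)-\epsilon g$, compute $\Li_{X^0}(h)$ using the soliton equation (your Step~2 Lie-derivative identity is exactly what the paper writes), pair with $h$, and absorb the nonlinear remainders with $\|h\|_\infty\le\epsilon$. The one piece you flag as ``the main obstacle'' is precisely where the paper does the real work: to see that the curvature terms combine into $4\Rm(g_0)(h,h)$ plus an $O(\epsilon)$ error, the paper freezes a point, picks a $g_0$-orthonormal frame in which $g$ is diagonal with eigenvalues $\lambda_i$, and expands $g^{kl}g_{ip}\Rm(g_0)_{jklp}h_{ij}=\sum_{i,k}\Rm(g_0)_{ikki}\lambda_k^{-1}\lambda_i(\lambda_i-1)$ explicitly; this diagonalization trick is what makes the $\Ric(g_0)$ cross-terms from $\Li_{X^0}(h)$ cancel cleanly against the $\Ric(g_0)$ pieces coming out of the curvature expression, leaving only $-2\Rm(g_0)(h,h)+O(\epsilon)\Rm(g_0)*h^{*2}$.
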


\begin{proof}
We do the proof in the case of an expanding gradient Ricci soliton ($\epsilon=1$).
According to Shi \cite{Shi-Def},
\begin{eqnarray*}
\frac{\partial}{\partial t}g_{ij}&=&g^{ab}\nabla^0_a\nabla^0_bg_{ij}-g^{kl}g_{ip}\Rm(g_0)_{jklp}-g^{kl}g_{jp}\Rm(g_0)_{iklp}\\
&&+(\nabla^0g\ast\nabla^0g)_{ij}+\Li_{X^0}(g)_{ij}-g_{ij},
\end{eqnarray*}
where, if $A$ and $B$ are two tensors, $A\ast B$ means some linear combination of contractions of $A\otimes B$.
Therefore,
\begin{eqnarray*}
\frac{\partial}{\partial t}\arrowvert h\arrowvert^2&=&2\left\langle\frac{\partial}{\partial t}g,h\right\rangle\\
&=&2\langle g^{-1}\ast\nabla^0\nabla^0g-\Rm(g_0)\ast g^{-1}\ast g,h\rangle\\
&&-2\langle g-\Li_{X^0}(g),h\rangle+\langle \nabla^0g\ast\nabla^0g,h\rangle\\
&\leq&g^{-1}\ast\nabla^0\nabla^0\arrowvert h\arrowvert^2-2(1-\epsilon)\arrowvert\nabla^0 h\arrowvert^2\\
&&-2\langle g-\Li_{X^0}(g),h\rangle-2\langle\Rm(g_0)\ast g^{-1}\ast g,h\rangle,
\end{eqnarray*}
where $(\Rm(g_0)\ast g^{-1}\ast g)_{ij}:=g^{kl}g_{ip}\Rm(g_0)_{jklp}+g^{kl}g_{jp}\Rm(g_0)_{iklp}$. Now, using the soliton equation satisfied by $g_0$,
\begin{eqnarray*}
\langle g-\Li_{X^0}(g),h\rangle=\arrowvert h\arrowvert^2-\langle\Li_{X^0}(h),h\rangle-2\langle\Ric(g_0),h\rangle.
\end{eqnarray*}

Moreover, as 
\begin{eqnarray*}
\Li_{X^0}(h)&=&\nabla^0_{X^0}h+\frac{1}{2}(\Li_{X^0}(g_0)\otimes h+h\otimes\Li_{X^0}(g_0)),\\
&=&\nabla^0_{X^0}h+h+\Ric(g_0)\otimes h+h\otimes \Ric(g_0),
\end{eqnarray*}
we get 
\begin{eqnarray*}
2\langle g-\Li_{X^0}(g),h\rangle=-X^0\cdot\arrowvert h\arrowvert^2-4\langle\Ric(g_0)\otimes h,h\rangle-4\langle\Ric(g_0),h\rangle.
\end{eqnarray*}
Hence,
\begin{eqnarray*}
\frac{\partial}{\partial t}\arrowvert h\arrowvert^2&\leq& g^{-1}\ast\nabla^0\nabla^0\arrowvert h\arrowvert^2-2(1-\epsilon)\arrowvert\nabla^0 h\arrowvert^2-2\langle\Rm(g_0)\ast g^{-1}\ast g,h\rangle\\
&&+4\langle\Ric(g_0),h\rangle+X^0\cdot\arrowvert h\arrowvert^2+2\langle\Li_{X^0}(g_0)\otimes h,h\rangle\\
&\leq&g^{-1}\ast\nabla^0\nabla^0\arrowvert h\arrowvert^2+X^0\cdot\arrowvert h\arrowvert^2-2(1-\epsilon)\arrowvert\nabla^0 h\arrowvert^2\\
&&+4\langle\Ric(g_0)\otimes h,h\rangle+4\langle\Ric(g_0),h\rangle-2\langle\Rm(g_0)\ast g^{-1}\ast g,h\rangle.\\
\end{eqnarray*}
Finally, let us compute the curvature term in coordinates at a fixed point and a fixed time such that $({g_0}_{ij})=(\delta_{ij})$ and $(g_{ij})=(\lambda_i\delta_{ij})$ where $\lambda_i$ are  positive numbers.
\begin{eqnarray*}
\langle\Rm(g_0)\ast g^{-1}\ast g,h\rangle&=&2\Rm(g_0)_{ikki}\lambda_k^{-1}\lambda_i(\lambda_i-1)\\
&=&2\Rm(g_0)_{ikki}(\lambda_k^{-1}-1)\lambda_i(\lambda_i-1)\\
&&+2\Rm(g_0)_{ikki}\lambda_i(\lambda_i-1)\\
&=&-2\Rm(g_0)(\lambda_k-1)(\lambda_i-1)\frac{\lambda_i}{\lambda_k}\\
&&+2\Rm(g_0)_{ikki}(\lambda_i-1)^2+2\Rm(g_0)_{ikki}(\lambda_i-1)\\
&=&-2\Rm(g_0)(h,h)+\Rm(g_0)\ast h^{\ast 3}\\
&&+2\langle\Ric(g_0)\otimes h,h\rangle+2\langle\Ric(g_0),h\rangle.
\end{eqnarray*}

Summing it up, we get
\begin{eqnarray*}
\frac{\partial}{\partial t}\arrowvert h\arrowvert^2&\leq& g^{-1}\ast\nabla^0\nabla^0\arrowvert h\arrowvert^2+X^0\cdot\arrowvert h\arrowvert^2\\
&&-2(1-\epsilon)\arrowvert\nabla^0 h\arrowvert^2+4\Rm(g_0)(h,h)+4\epsilon\arrowvert\Rm(g_0)(h,h)\arrowvert.
\end{eqnarray*}
\end{proof}

We use this previous differential inequality to solve Dirichlet problems of (MRHF) on geodesic balls of $(M,g_0)$.
\begin{coro}\label{a-priori-est-dir}
Let $(g(t))_{t\in[0,T)}$ with $0<T<\infty$, be a solution of (MRHF) on $B_{g_0}(p,R)$ with $$g(t)\mid_{\partial B_{g_0}(p,R)}=g_0\mid_{\partial B_{g_0}(p,R)}.$$ Let $\delta$ be a positive number. Then there exists $\epsilon=\epsilon(n,T,\delta,k_0))>0$ such that $$\|g(0)-g_0\|_{L^{\infty}(B_{g_0}(p,R))}\leq \epsilon$$ implies
\begin{eqnarray*}
\|g(t)-g_0\|_{L^{\infty}(B_{g_0}(p,R)\times [0,T))}\leq \delta. 
\end{eqnarray*}
\end{coro}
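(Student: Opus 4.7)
The plan is a parabolic maximum principle argument applied to $u := |g-g_0|^2_{g_0}$, combined with a bootstrap/continuity argument to keep the closeness hypothesis of Lemma~\ref{pt-est} valid throughout $[0,T)$.

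From Lemma~\ref{pt-est}, using $|\Rm(g_0)| \leq k_0$ to bound $|\Rm(g_0)(h,h)| \leq k_0 u$, and dropping the non-positive gradient term $-2(1-\epsilon)|\nabla^0 h|^2$, one obtains the pointwise inequality
\begin{eqnarray*}
\partial_t u \leq g^{ij}\nabla^0_i\nabla^0_j u + X^0 \cdot \nabla^0 u + C k_0 u
\end{eqnarray*}
(valid on any space--time region where the lemma applies, for some universal $C$), where the second-order part is uniformly elliptic as soon as $\|g-g_0\|_\infty \leq 1/2$. The key observation is that the $R$-dependent size of $X^0 = \nabla^{g_0}f_0$ on the ball and the size of $g^{ij}$ only enter in the first- and second-order coefficients, and so do not contribute to the supremum bound furnished by the maximum principle.

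Let $\epsilon_0$ be the smallness threshold required by Lemma~\ref{pt-est}, set $\delta_0 := \min(\delta, \epsilon_0, 1/2)$, and define
\begin{eqnarray*}
T^\ast := \sup\{\, t \in [0,T) : \|g(s)-g_0\|_{L^\infty(B_{g_0}(p,R))} \leq \delta_0\ \text{for all}\ s \in [0,t]\,\}.
\end{eqnarray*}
The Dirichlet condition forces $u \equiv 0$ on $\partial B_{g_0}(p,R) \times [0,T)$, so applying the standard parabolic maximum principle to $e^{-Ck_0 t} u$ on $B_{g_0}(p,R) \times [0, T^\ast)$ yields
\begin{eqnarray*}
\sup_{B_{g_0}(p,R)\times[0,T^\ast)} u \leq e^{C k_0 T} \sup_{B_{g_0}(p,R)} u(\cdot, 0) \leq e^{C k_0 T}\epsilon^2.
\end{eqnarray*}

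Choosing $\epsilon = \epsilon(n, T, \delta, k_0) > 0$ with $\epsilon \cdot e^{Ck_0 T/2} < \delta_0$ forces $u < \delta_0^2$ strictly on $B_{g_0}(p,R) \times [0, T^\ast)$, which by continuity rules out $T^\ast < T$ and gives $\|g(t)-g_0\|_\infty \leq \delta$ on the whole interval. The main delicate point lies in the dependence of constants: one must verify that the bootstrap threshold $\epsilon_0$ and the coefficient $Ck_0$ both depend only on $n$ and $k_0$, so that the drift term $X^0$ and the metric coefficients — which are the only $R$-dependent objects appearing in the parabolic operator — do not contaminate the maximum principle's exponential factor. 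This is precisely what Lemma~\ref{pt-est} and the zero-order nature of the curvature term $C k_0 u$ guarantee.
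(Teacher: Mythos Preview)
Your proof is correct and follows the same maximum-principle-plus-bootstrap route as the reference the paper cites (Lemma~2.3 of \cite{Sch-Sch-Sim}): the paper's own proof consists solely of that citation, and your argument spells out exactly what that lemma does in this setting. Your remark that the drift $X^0$ and the metric coefficients $g^{ij}$ are first- and second-order and hence do not enter the exponential factor is the key point, and your continuity argument closing the bootstrap at $T^\ast$ is clean.
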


\begin{proof}
This is exactly lemma $2.3$ of \cite{Sch-Sch-Sim}.
\end{proof}

Now, if we manage to prove a priori that our solution is sufficiently close to the background metric $g_0$, this solution will exist for all time. This is the content of the following theorem which is word for word (its content and the proof) theorem $2.4$ of \cite{Sch-Sch-Sim}.
\begin{defn}
A solution to (MRHF) $(g(t))_{t\in[0,T)}$ is \textbf{$\delta$-maximal} if either $T$ is infinite and $(g(t))_{t\in[0,+\infty)}$ is $\delta$-close to $g_0$ either we can extend the solution on $M\times [0,T+\tau)$ with $\tau=\tau(n,\sup_M\arrowvert\Rm(g)\arrowvert)$ positive and $\|g(T)-g_0\|_{L^{\infty}(M)}=\delta.$
\end{defn}

\begin{theo}\label{delta-max-sol}
There exists a positive constant $\tilde{\delta}=\tilde{\delta}(n,k_0)$ such that the following holds.
Let $0<\epsilon<\delta\leq\tilde{\delta}$. Then every metric $g(0)$ $\epsilon$-close to $g_0$ has a $\delta$-maximal solution $g(t)_{t\in[0,T(g_0))}$ with $T(g_0)$ positive which is $\delta$-close for all $t\in[0,T(g_0))$. 
\end{theo}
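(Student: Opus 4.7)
The plan is to construct the $\delta$-maximal solution by an exhaustion procedure: solve Dirichlet problems for (MRHF) on an increasing sequence of geodesic balls in $(M,g_0)$, use the pointwise estimate of Lemma \ref{pt-est} and the $L^\infty$ conclusion of Corollary \ref{a-priori-est-dir} to propagate $\delta$-closeness, and pass to a global limit via $g_0$-Shi-type interior derivative estimates. Once a $\delta$-close solution is available on some interval, define $T(g_0)$ as the supremum of times of $\delta$-closeness and verify the dichotomy in the definition of $\delta$-maximality.

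\textbf{Step 1 (Approximating Dirichlet solutions and global limit).} Fix an exhaustion $B_{g_0}(p,R_k)\uparrow M$. On each ball solve (MRHF) with initial data $g(0)$ and boundary value $g_0$; this is a strictly parabolic problem thanks to the DeTurck modification, the additional terms from $X^0$ and $\epsilon g$ being lower order with coefficients locally bounded by $(\Hyp)$. Standard parabolic theory provides a smooth short-time Dirichlet solution $g_k(t)$. Apply Corollary \ref{a-priori-est-dir}: for $\epsilon$ small enough, each $g_k$ is $\delta$-close to $g_0$ on $B_{g_0}(p,R_k)\times[0,T)$ for any prescribed $T$. $g_0$-Shi estimates applied to $h_k:=g_k-g_0$ then yield uniform $C^m_{\mathrm{loc}}$ bounds on $M\times[0,T)$, using only $k_0=\sup_M|\Rm(g_0)|$ and the $L^\infty$-smallness of $h_k$. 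Arzel\`a--Ascoli and a diagonal extraction produce a smooth solution $g(t)$ of (MRHF) on $M\times[0,S)$ with $\|g(t)-g_0\|_\infty\le\delta$ and $S>0$.

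\textbf{Step 2 (Choice of $\tilde\delta$ and definition of $T(g_0)$).} Choose $\tilde\delta=\tilde\delta(n,k_0)$ so small that $\|h\|_\infty\le\tilde\delta$ guarantees uniform equivalence of $g$ and $g_0$ and lets the curvature term $4\Rm(g_0)(h,h)$ in Lemma \ref{pt-est} be controlled by $4k_0|h|^2$, allowing the weak maximum principle on $|h|^2$ to be applied (the drift $X^0\!\cdot\!|h|^2$ is a transport term compatible with the maximum principle). Set
\[
T(g_0):=\sup\bigl\{T>0:\ g(t)\text{ is a smooth solution on }[0,T)\text{ with }\|g(t)-g_0\|_\infty<\delta\bigr\}.
\]
Since $\epsilon<\delta$ we have $T(g_0)>0$ by Step 1.

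\textbf{Step 3 ($\delta$-maximality).} If $T(g_0)=+\infty$ the first alternative of $\delta$-maximality holds. Otherwise, by continuity of $t\mapsto\|g(t)-g_0\|_\infty$ together with the definition of $T(g_0)$ as a supremum, $\|g(T(g_0))-g_0\|_\infty=\delta$. At time $T(g_0)$ the $g_0$-Shi estimates of Step 1 and the uniform equivalence $g(T(g_0))\sim g_0$ give a curvature bound $\sup_M|\Rm(g(T(g_0)))|\le C(n,k_0,\delta)$; restarting Shi short-time existence for (MRHF) from $g(T(g_0))$ extends the flow to $[T(g_0),T(g_0)+\tau)$ with $\tau=\tau(n,\sup_M|\Rm(g(T(g_0)))|)>0$, which is the second alternative.

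The main technical point is Step 1: producing a \emph{global} short-time solution on the noncompact manifold $M$ that is simultaneously $L^\infty$-close to $g_0$. The $L^\infty$-closeness is guaranteed by the maximum principle applied to the scalar inequality in Lemma \ref{pt-est} together with the Dirichlet a priori estimate of Corollary \ref{a-priori-est-dir}; the crucial observation that makes everything compatible with the noncompactness of $M$ is that the drift $X^0$ enters as a first-order transport term with no sign obstruction, so its lack of uniform bound on $M$ does not break the scalar maximum principle used in Corollary \ref{a-priori-est-dir}. All remaining arguments are routine adaptations of the Schn\"urer--Schulze--Simon scheme \cite{Sch-Sch-Sim} to the present weighted setting.
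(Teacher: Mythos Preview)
Your proposal is correct and follows essentially the same route as the paper, which simply refers to Theorem~2.4 of \cite{Sch-Sch-Sim}: Dirichlet exhaustion, the a priori $L^\infty$ bound of Corollary~\ref{a-priori-est-dir}, $g_0$-Shi interior estimates for compactness, and a continuity argument for the $\delta$-maximal dichotomy. One small clarification: in Step~1 your phrasing ``for $\epsilon$ small enough \dots\ for any prescribed $T$'' reverses the logical dependence in Corollary~\ref{a-priori-est-dir} (where $\epsilon$ depends on $T$), but since you only use this to get \emph{some} $S>0$ the argument is unaffected; also note that the Dirichlet initial data must be cut off near $\partial B_{g_0}(p,R_k)$ to match the boundary value $g_0$, a standard step in the Schn\"urer--Schulze--Simon scheme that you implicitly defer to.
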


With the help of the a priori estimates of corollary \ref{a-priori-est-dir}, a useful corollary for the sequel consists in the following :
\begin{coro}\label{amorce-tps-inf}
With the same assumptions as in theorem \ref{delta-max-sol}, if $T>0$ is given, and if $\epsilon(n,T,\delta,k_0)$ is small enough, then the solution $(g(t))_{t\in[0,T_{g_0}+\tau)}$ satisfies $T_{g_0}\geq T$.
\end{coro}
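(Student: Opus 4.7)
The plan is to argue by contradiction. Suppose that, for any choice of $\epsilon$, one has $T_{g_0} < T$. By the very definition of a $\delta$-maximal solution, this forces
\begin{eqnarray*}
\|g(T_{g_0})-g_0\|_{L^{\infty}(M)}=\delta.
\end{eqnarray*}
So it suffices to show that, by choosing $\epsilon$ sufficiently small in terms of $n,T,\delta,k_0$, one can actually guarantee $\|g(t)-g_0\|_{L^{\infty}(M)}<\delta$ on all of $[0,T_{g_0}]$, yielding a contradiction.

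To get this uniform bound, I would first revisit Lemma \ref{pt-est}. Throughout $[0,T_{g_0})$ the solution is $\delta$-close to $g_0$, so the matrix $g^{ij}$ is uniformly equivalent to $g_0^{ij}$, the curvature coefficient $\Rm(g_0)$ is bounded by $k_0$, and the drift $X^0=\nabla^{0}f_0$ has norm bounded (steady case) or at most linear in the $g_0$-distance (expanding case, since $|\nabla f_0|^2=f_0+C$). Setting $u:=|g-g_0|^2_{g_0}$, the differential inequality of Lemma \ref{pt-est} reduces to a bounded-coefficient parabolic inequality of the form
\begin{eqnarray*}
\partial_t u\leq g^{ij}\nabla^0_i\nabla^0_j u+X^0\cdot u+C(n,k_0)u
\end{eqnarray*}
on $M\times[0,T_{g_0})$.

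The main step, and the place where Corollary \ref{a-priori-est-dir} really enters, is to upgrade this local pointwise inequality to a global supremum bound on the complete, non-compact manifold $M$. Two natural routes are available. The first is to apply a weak maximum principle of Karp--Li type: since $u$ is a bounded subsolution, the drift has at most linear growth, and the zeroth order coefficient is a uniform constant, one obtains $\sup_Mu(t)\leq e^{C(n,k_0)t}\sup_Mu(0)\leq e^{C(n,k_0)T}\epsilon^2$. The second, more in the spirit of the paper, is to exhaust $M$ by balls $B_{g_0}(p,R)$, replace the initial data with $\chi_R(g(0)-g_0)+g_0$ for a cutoff $\chi_R$ vanishing near $\partial B_{g_0}(p,R)$, solve the corresponding Dirichlet problem, apply Corollary \ref{a-priori-est-dir} on each ball (which is precisely tailored to this setting and provides the dependence of $\epsilon$ on $T$), and finally pass to the limit $R\to+\infty$, identifying the limit with $g(t)$ via uniqueness.

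Once the bound $\sup_Mu(t)\leq e^{C(n,k_0)T}\epsilon^2$ is in hand on $[0,T_{g_0}]$, it is enough to pick $\epsilon=\epsilon(n,T,\delta,k_0)>0$ so small that $e^{C(n,k_0)T}\epsilon^2<\delta^2$ to reach the desired contradiction with $\|g(T_{g_0})-g_0\|_{L^{\infty}(M)}=\delta$. The main obstacle, in my view, is the global step: checking that the maximum principle (or the exhaustion--uniqueness argument) is genuinely applicable in the expanding case, where $|X^0|$ is unbounded and one really relies on linear growth of the drift together with the boundedness of $u$ inherited from the $\delta$-maximality.
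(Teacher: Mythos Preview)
Your proposal is essentially correct and, in its second route, matches the paper's approach exactly. The paper does not spell out a proof of the corollary; it simply states that the result follows ``with the help of the a priori estimates of Corollary~\ref{a-priori-est-dir}''. The solution of Theorem~\ref{delta-max-sol} is constructed as a limit of Dirichlet solutions on exhausting balls (this is the content of the reference to \cite{Sch-Sch-Sim}), so Corollary~\ref{a-priori-est-dir} applies directly to each approximant and the $\delta$-closeness bound passes to the limit --- precisely your second route, and the contradiction with $\|g(T_{g_0})-g_0\|_{L^\infty}=\delta$ follows.

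Your first route (a Karp--Li type maximum principle on the full manifold) is a legitimate alternative but, as you yourself flag, requires extra care in the expanding case since $|X^0|$ is unbounded. It is not the paper's argument: the paper sidesteps this issue entirely by working on compact balls where the maximum principle is elementary, and this is one place where the Dirichlet exhaustion genuinely pays off.
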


\section{Convergence}
Again, consider a background gradient Ricci soliton $(M,g_0,\nabla^0f_0)$ satisfying $(\Hyp)$.\\
If $(g(t))_{t\in[0,T)}$ is close enough to $g_0$, we consider the truncated $L_{f_0}^2$ norm of the difference $h(t):=g(t)-g_0$ as a possible Lyapunov function as in \cite{Sch-Sch-Sim}.\\

\begin{prop}\label{Lya-fct}
Assume $(M,g_0,\nabla^0f_0)$ is strictly stable. Let $(g(t))_{t\in[0,T)}$ be a solution to (MRHF) on $B_{g_0}(p,R)$ with $g(t)=g_0$ on $S_{g_0}(p,R)\times[0,T)$. Assume that $\|g-g_0\|_{L^{\infty}(B_{g_0}(p,R)\times[0,T))}\leq \epsilon\leq \epsilon(\lambda,k_0)$. Then we have
\begin{eqnarray*}
\int_{B_{g_0}(p,R)}\arrowvert g(t)-g_0\arrowvert^2d\mu_{f_0}\leq e^{-\tilde{\lambda}t}\int_{B_{g_0}(p,R)}\arrowvert g(0)-g_0\arrowvert^2d\mu_{f_0},
\end{eqnarray*}
where $\tilde{\lambda}=\tilde{\lambda}(\epsilon)<2\lambda$ such that $\lim_{\epsilon\rightarrow +\infty}\tilde{\lambda}(\epsilon)=2\lambda$. 
\end{prop}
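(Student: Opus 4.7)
The plan is to take $\Phi(t) := \int_{B_{g_0}(p,R)} |h|_{g_0}^2 \, d\mu_{f_0}$, with $h := g(t) - g_0$, as a Lyapunov function, differentiate it along (MRHF), isolate the linearization of the right-hand side at $g_0$ (which is exactly $-L$), and absorb the nonlinear corrections into the decay rate furnished by strict stability. The Dirichlet condition $g(t) = g_0$ on $S_{g_0}(p,R)$ is what makes this work: every weighted integration by parts is boundary-free, and the extension of $h$ by zero outside $B_{g_0}(p,R)$ is an admissible test tensor in the strict stability inequality.

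Concretely, the soliton identity $-2\Ric(g_0) - \epsilon g_0 + \Li_{X^0}(g_0) = 0$ together with $V(g_0) = 0$ rewrites (MRHF) as $\partial_t h = -Lh + N(h)$, where the nonlinear remainder obeys
\[
|N(h)| \leq C(n, k_0) \left( |h| \, |(\nabla^0)^2 h| + |\nabla^0 h|^2 + |h|^2 \right).
\]
Then
\[
\dot{\Phi} = -2\int \langle h, Lh \rangle \, d\mu_{f_0} + 2 \int \langle h, N(h)\rangle \, d\mu_{f_0},
\]
and strict stability applied to $h$ (extended by zero) yields $-2\int \langle h, Lh\rangle \, d\mu_{f_0} \leq -2\lambda \Phi$. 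After one weighted integration by parts to remove the second derivative in $N(h)$, organized around the weighted divergence $\div_{f_0}$ so that no uncontrolled $\nabla^0 f_0$-weight survives, and after using $|h| \leq \epsilon$ pointwise to extract one factor of $\epsilon$, the nonlinear contribution is controlled by
\[
\left| 2 \int \langle h, N(h)\rangle \, d\mu_{f_0} \right| \leq C(n, k_0) \, \epsilon \int_{B_{g_0}(p,R)} \left( |\nabla^0 h|^2 + |h|^2 \right) d\mu_{f_0}.
\]

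To close the argument, the $|\nabla^0 h|^2$-term is absorbed using the identity $\int \langle h, Lh\rangle \, d\mu_{f_0} = \int (|\nabla^0 h|^2 - 2\langle \Rm(g_0) \ast h, h\rangle) \, d\mu_{f_0}$, which together with $\sup_M |\Rm(g_0)| \leq k_0$ gives
\[
\int |\nabla^0 h|^2 \, d\mu_{f_0} \leq \int \langle h, Lh\rangle \, d\mu_{f_0} + 2k_0 \int |h|^2 \, d\mu_{f_0}.
\]
Substituting, collecting terms, and choosing $\epsilon = \epsilon(\lambda, k_0)$ small enough yields $\dot{\Phi} \leq -\tilde{\lambda}(\epsilon) \Phi$ with $\tilde{\lambda}(\epsilon) = (2 - C\epsilon)\lambda - C'\epsilon < 2\lambda$ and $\tilde{\lambda}(\epsilon) \to 2\lambda$ as $\epsilon \to 0$; Gronwall's inequality then finishes the proof. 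The principal obstacle is the second-derivative nonlinearity $(g^{-1} - g_0^{-1}) \ast (\nabla^0)^2 h$ inside $N(h)$: after integration by parts, this is precisely the term producing the $|\nabla^0 h|^2$-contribution that must be absorbed into the Lichnerowicz energy, and this absorption is what forces the decay rate to degenerate slightly from $2\lambda$ to $\tilde{\lambda}(\epsilon)$. A secondary bookkeeping point is organizing the weighted integrations by parts so that no boundary term or uncontrolled $\nabla^0 f_0$-weighted piece survives.
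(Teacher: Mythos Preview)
Your overall strategy matches the paper's: differentiate the weighted $L^2$-norm, isolate the Lichnerowicz quadratic form, absorb the nonlinear error as $\epsilon$ times the Dirichlet energy, and close with strict stability plus Gronwall. The decomposition $\partial_t h = -Lh + N(h)$ is equivalent to the paper's route via the pointwise inequality of Lemma~\ref{pt-est}.

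The gap is precisely the point you label a ``secondary bookkeeping point''. After one weighted integration by parts, the second-derivative nonlinearity $(g^{-1}-g_0^{-1})\ast(\nabla^0)^2 h$ produces the term
\[
-\int_{B_{g_0}(p,R)}\bigl\langle\div_{f_0}(g^{-1}-g_0^{-1}),\,\nabla^0\arrowvert h\arrowvert^2\bigr\rangle\,d\mu_{f_0},
\]
and $\div_{f_0}(g^{-1}-g_0^{-1})$ contains the contribution $(g^{-1}-g_0^{-1})(\nabla^0 f_0,\cdot)$, which carries an \emph{explicit} factor of $\nabla^0 f_0$. Since $\arrowvert\nabla^0 f_0\arrowvert$ is unbounded in the expanding case (it grows linearly), simply ``organising around $\div_{f_0}$'' does \emph{not} make this weight disappear: the naive estimate gives only $C\epsilon\int\arrowvert\nabla^0 f_0\arrowvert\,\arrowvert h\arrowvert\,\arrowvert\nabla^0 h\arrowvert\,d\mu_{f_0}$, which is not controlled by $\epsilon\int(\arrowvert\nabla^0 h\arrowvert^2+\arrowvert h\arrowvert^2)\,d\mu_{f_0}$.

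The missing ingredient is a Koiso-type identity (the paper invokes it explicitly): for compactly supported symmetric $2$-tensors $T$,
\[
2\|\nabla^0 T\|_{L^2_{f_0}}^2=\|\Cod(T)\|_{L^2_{f_0}}^2+2\|\div_{f_0}T\|_{L^2_{f_0}}^2+\|T\|_{L^2_{f_0}}^2+2\langle\Rm(g_0)\ast T,T\rangle_{L^2_{f_0}},
\]
which yields $\|\div_{f_0}T\|_{L^2_{f_0}}^2\leq\|\nabla^0 T\|_{L^2_{f_0}}^2+c(k_0)\|T\|_{L^2_{f_0}}^2$. This is a genuine cancellation in the Bakry--\'Emery structure, not bookkeeping: it is what allows you to replace the unbounded pointwise weight $\arrowvert\nabla^0 f_0\arrowvert$ by an $L^2_{f_0}$-bound involving only $\nabla^0 h$ and $h$. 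Applying it to $T=g^{-1}-g_0^{-1}$ (which is $O(\epsilon)$-close to $-h$), followed by Cauchy--Schwarz and Young, gives exactly the error bound you assert. With this identity inserted, your argument is complete and coincides with the paper's.
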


\begin{proof}
These estimates follow by integrating the pointwise estimates of lemma \ref{pt-est} and by using integration by parts :
\begin{eqnarray*}
&&\frac{d}{dt}\int_{B_{g_0}(p,R)}\arrowvert h\arrowvert^2d\mu_{f_0}\leq\int_{B_{g_0}(p,R)}g^{-1}\ast\nabla^0\nabla^0\arrowvert h\arrowvert^2+X^0\cdot\arrowvert h\arrowvert^2d\mu_{f_0}\\
&&+\int_{B_{g_0}(p,R)}-2(1-\epsilon)\arrowvert \nabla^0h\arrowvert^2+4\Rm(g_0)(h,h)+4\epsilon\arrowvert\Rm(g_0)(h,h)\arrowvert d\mu_{f_0}\\
&&\leq\int_{B_{g_0}(p,R)}\Delta_{f_0}\arrowvert h\arrowvert^2+\left(g^{-1}-g_0^{-1}\right)\ast\nabla^0\nabla^0\arrowvert h\arrowvert^2d\mu_{f_0}\\
&&\int_{B_{g_0}(p,R)}-2(1-\epsilon)\arrowvert \nabla^0h\arrowvert^2+4\Rm(g_0)(h,h)+4\epsilon\arrowvert\Rm(g_0)(h,h)\arrowvert d\mu_{f_0}\\
&&\leq-2(1-\epsilon)\int_{B_{g_0}(p,R)}\arrowvert \nabla^0h\arrowvert^2-2\Rm(g_0)(h,h)d\mu_{f_0}\\
&& +c_0\epsilon\int_{B_{g_0}(p,R)}\arrowvert h\arrowvert^2d\mu_{f_0}-\int_{B_{g_0}(p,R)}\langle\div_{f_0}(g^{-1}),\nabla^0\arrowvert h\arrowvert^2\rangle d\mu_{f_0},
\end{eqnarray*}
where $c_0=c(k_0).$
To get rid of the divergence term, as the vector field $\nabla^0 f_0$ is not bounded, we cannot simply bound this term by the norm of the covariant derivative of $h$. Therefore, we need the following identity inspired by the work of Koiso \cite{koi-ein} :

\begin{eqnarray*}
2<\nabla^0 T,\nabla^0 T>_{L^2_{f_0}}&=&<\Cod(T),\Cod(T)>_{L^2_{f_0}}+2<\div_{f_0}(T),\div_{f_0}(T)>_{L^2_{f_0}}\\
&&-2<\Ric_{f_0}(T),T>_{L^2_{f_0}}+2<\Rm(g_0)\ast T,T>_{L^2_{f_0}}\\
&=&\|\Cod(T)\|^2_{L_{f_0}^2}+2\|\div_{f_0}(T)\|^2_{L_{f_0}^2}\\
&&+\|T\|^2_{L^2_{f_0}}+2<\Rm(g_0)\ast T,T>_{L^2_{f_0}},
\end{eqnarray*}
for any $T\in C_0^{\infty}(M,S^2T^*M)$ where $\Cod(T)_{ijk}:=\nabla^0_iT_{jk}-\nabla^0_jT_{ik}$ is the Codazzi tensor associated to $T$. Hence the $L^2_{f_0}$-norm of the divergence of $T$ is bounded by the $L^2_{f_0}$-norm of $T$ and $\nabla^0 T$. Therefore, by applying the Young inequality,

\begin{eqnarray*}
\frac{d}{dt}\int_{B_{g_0}(p,R)}\arrowvert h\arrowvert^2d\mu_{f_0}&\leq&-(2-c_0\epsilon)\int_{B_{g_0}(p,R)}\arrowvert \nabla^0h\arrowvert^2-2\Rm(g_0)(h,h)d\mu_{f_0}\\
&&+c_0\epsilon\int_{B_{g_0}(p,R)}\arrowvert h\arrowvert^2d\mu_{f_0}.
\end{eqnarray*}

Now, as $(M,g_0,\nabla^0f_0)$ is strictly stable, and if $\epsilon$ is small enough, we get : 

\begin{eqnarray*}
&&\frac{d}{dt}\int_{B_{g_0}(p,R)}\arrowvert h\arrowvert^2d\mu_{f_0}\leq-\left((2-c_0\epsilon)\lambda-c_0\epsilon\right)\int_{B_{g_0}(p,R)}\arrowvert h\arrowvert^2d\mu_{f_0},
\end{eqnarray*}
with $(2-c_0\epsilon)\lambda-c_0\epsilon>0$.

\end{proof}

Since we are building a solution with the help of Dirichlet exhaustions, the previous estimate extends to $M\times[0,T)$ : 

\begin{coro}
Assume $(M,g_0,\nabla^0f_0)$ is strictly stable. Let $T>0$. Assume $g(0)\in L_{f_0}^2(M,S^2T^*M)$. 

Then there exists $\epsilon_0=\epsilon_0(n,T,k_0,\lambda)$ such that if $g(0)$ is $\epsilon_0$-close to $g_0$ then a solution to (MRHF) $(g(t))_{t\in[0,T)}$ on $M$ exists, is $\epsilon$-close with $\epsilon$ as in proposition \ref{Lya-fct} and satisfies
\begin{eqnarray*}
\|g(t)-g_0\|^2_{L_{f_0}^2(M,S^2T^*M)}\leq e^{-\tilde{\lambda}t}\|g(0)-g_0\|^2_{L_{f_0}^2(M,S^2T^*M)},
\end{eqnarray*}
for all $t\in[0,T)$ with $\tilde{\lambda}$ defined in proposition \ref{Lya-fct}.
\end{coro}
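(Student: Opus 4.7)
The plan is to prove the corollary by a standard Dirichlet exhaustion argument, reducing to the already-established ball estimate of Proposition \ref{Lya-fct}. Fix a point $p\in M$, a sequence $R_j\nearrow+\infty$, and construct for each $j$ a smooth initial datum $g_j(0)$ on $B_{g_0}(p,R_j)$ which coincides with $g(0)$ on $B_{g_0}(p,R_j-1)$, equals $g_0$ in a neighbourhood of $\partial B_{g_0}(p,R_j)$, and is a convex interpolation between the two in the collar. If $\|g(0)-g_0\|_{L^\infty}\leq\epsilon_0$, the same bound holds for $g_j(0)$, and moreover $\|g_j(0)-g_0\|_{L_{f_0}^2(B_{g_0}(p,R_j))}\leq\|g(0)-g_0\|_{L_{f_0}^2(M)}$.

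By Corollary \ref{amorce-tps-inf}, provided $\epsilon_0=\epsilon_0(n,T,k_0,\lambda)$ is sufficiently small, the Dirichlet problem for (MRHF) on $B_{g_0}(p,R_j)\times[0,T)$ with initial value $g_j(0)$ and boundary value $g_0$ on $\partial B_{g_0}(p,R_j)\times[0,T)$ admits a smooth solution $g_j(t)$ which is $\epsilon$-close to $g_0$ throughout, where $\epsilon$ is as in Proposition \ref{Lya-fct}. Apply Proposition \ref{Lya-fct} on each ball to obtain
\begin{eqnarray*}
\int_{B_{g_0}(p,R_j)}\arrowvert g_j(t)-g_0\arrowvert^2\,d\mu_{f_0}\leq e^{-\tilde\lambda t}\|g(0)-g_0\|^2_{L_{f_0}^2(M)},
\end{eqnarray*}
uniformly in $j$ and $t\in[0,T)$. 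Next, the $L^\infty$ bound $\arrowvert g_j-g_0\arrowvert\leq\epsilon$ together with Shi-type interior estimates (applicable since $\arrowvert\Rm(g_0)\arrowvert$ is bounded by $(\Hyp)$) yields locally uniform bounds on $\nabla^{0,k}g_j(t)$ on compact subsets of $M\times(0,T)$. A diagonal extraction then produces a subsequence converging in $C_{\mathrm{loc}}^\infty$ to a smooth solution $g(t)$ of (MRHF) on $M\times[0,T)$, $\epsilon$-close to $g_0$, with initial value $g(0)$.

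Finally, the weighted $L^2$ estimate descends to the limit: for any compact $K\subset M$ and $j$ large enough that $K\subset B_{g_0}(p,R_j)$,
\begin{eqnarray*}
\int_K\arrowvert g(t)-g_0\arrowvert^2\,d\mu_{f_0}=\lim_{j\to\infty}\int_K\arrowvert g_j(t)-g_0\arrowvert^2\,d\mu_{f_0}\leq e^{-\tilde\lambda t}\|g(0)-g_0\|^2_{L_{f_0}^2(M)},
\end{eqnarray*}
and exhausting $M$ by compact sets gives the claim. The main subtlety is the construction of the cut-off initial data: one must ensure that the interpolation in the annulus $B_{g_0}(p,R_j)\setminus B_{g_0}(p,R_j-1)$ stays $\epsilon_0$-close in $L^\infty$ and does not increase the global $L_{f_0}^2$ norm, which is routine but requires some care because no uniform bound on $d\mu_{f_0}$ of the annulus is available a priori; however, since the interpolation only uses values of $g(0)$ and $g_0$, the pointwise inequality $\arrowvert g_j(0)-g_0\arrowvert\leq\arrowvert g(0)-g_0\arrowvert$ suffices.
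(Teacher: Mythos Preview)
Your argument is correct and is precisely the Dirichlet exhaustion that the paper has in mind; the paper does not spell out a proof but simply remarks, immediately before the corollary, that ``since we are building a solution with the help of Dirichlet exhaustions, the previous estimate extends to $M\times[0,T)$''. Your write-up makes explicit the construction of the cut-off initial data, the compactness/extraction step (for which the paper's Lemma~\ref{int-est} provides the interior derivative bounds), and the passage to the limit in the weighted $L^2$ inequality, all of which are implicit in the reference to \cite{Sch-Sch-Sim}. One small point of bookkeeping: Corollary~\ref{amorce-tps-inf} as stated in the paper concerns the global solution on $M$, whereas what you actually need for the ball solutions $g_j$ is the a priori $L^\infty$ estimate of Corollary~\ref{a-priori-est-dir} together with short-time existence for the Dirichlet problem; the content is the same, but the citation would be cleaner.
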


Such an exponential decay of the $L_{f_0}^2$-norm of $g(t)-g_0$ implies an exponential decay of the supremum norm of $g(t)-g_0$. Before stating this result, we need to uniformly control the covariant derivatives of $g(t)$ in time :
\begin{lemma}\label{int-est}
Let $(g(t))_{t\in[0,T)}$ be a solution of (MRHF) on $M$ such that $g(t)$ is $\epsilon$-close to $g_0$ for any $t\in[0,T)$ with $\epsilon\leq\epsilon(n,k_0).$ Then,
\begin{eqnarray*}
\sup_M\arrowvert\nabla^{0,j}g(t)\arrowvert\leq\frac{c(n,j,k_0)}{t^j},
\end{eqnarray*}
for all $t\in(0,\min\{1,T\}].$
\end{lemma}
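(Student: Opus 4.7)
The plan is to establish Shi-style interior higher-derivative estimates for (MRHF), which in DeTurck form is strictly parabolic with principal symbol $g^{ab}\nabla^0_a\nabla^0_b$. Since $\nabla^0 g_0 = 0$, for $j \geq 1$ one has $\nabla^{0,j} g = \nabla^{0,j} h$ with $h := g - g_0$, so it suffices to bound $\nabla^{0,j} h$ pointwise. The argument proceeds by induction on $j$, the case $j=0$ being the hypothesis $\|h\|_\infty \leq \epsilon$.

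Refining the computation of lemma \ref{pt-est}, I would first record the evolution of $h$ schematically as
$$\partial_t h = g^{-1}\ast\nabla^0\nabla^0 h + X^0\cdot h + \nabla^0 h \ast \nabla^0 h + \Rm(g_0)\ast h + (\mbox{lower order}).$$
Differentiating $j$ times, commuting covariant derivatives past $g^{ab}\nabla^0_a\nabla^0_b$ (which produces additional terms involving $\nabla^{0,i}\Rm(g_0)$ for $i \leq j$, all controlled by constants depending only on $n, i, k_0$ by applying standard Shi estimates to the fixed background $g_0$), and then contracting with $\nabla^{0,j} h$, a Kato-type manipulation yields
$$\partial_t |\nabla^{0,j} h|^2 \leq g^{ab}\nabla^0_a\nabla^0_b|\nabla^{0,j} h|^2 + X^0\cdot |\nabla^{0,j} h|^2 - 2(1-c\epsilon)|\nabla^{0,j+1} h|^2 + P_j,$$
where $P_j$ is a polynomial combination of $|\nabla^{0,i} h|$ for $0 \leq i \leq j$ with coefficients depending only on $n, j, k_0$.

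I would then apply the classical Bernstein-Shi auxiliary function technique. With exponents $\alpha_j$ tuned to yield the stated rate, set
$$F_j := t^{\alpha_j}|\nabla^{0,j} h|^2 + A\, t^{\alpha_{j-1}}|\nabla^{0,j-1} h|^2,$$
with $A = A(n, j, k_0)$ large. The coercive term $-A\, t^{\alpha_{j-1}}|\nabla^{0,j} h|^2$ arising from $A\, t^{\alpha_{j-1}}\partial_t|\nabla^{0,j-1} h|^2$ absorbs the $\alpha_j\, t^{\alpha_j -1}|\nabla^{0,j} h|^2$ produced when the time derivative falls on the factor $t^{\alpha_j}$. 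Inserting the inductive bounds on $\nabla^{0,i} h$ for $i < j$ and using Young's inequality to dominate the cubic terms in $P_j$, $F_j$ satisfies a parabolic inequality of the form
$$\partial_t F_j \leq g^{ab}\nabla^0_a\nabla^0_b F_j + X^0\cdot F_j + CF_j + C,$$
with $C = C(n, j, k_0)$, on $t \in (0, \min\{1, T\}]$.

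The main obstacle is the application of a maximum principle on the noncompact manifold $M$. Since $\|\Rm(g_0)\|_\infty \leq k_0$ and $(\Hyp)$ yields bounded geometry (in particular a uniform positive lower injectivity radius bound on sublevels of $f_0$), and $g(t)$ remains $\epsilon$-close to $g_0$, one can either apply the Omori-Yau maximum principle directly to $F_j$ on $(M, g(t))$ (the drift $X^0$ being the gradient of a function with controlled Hessian), or one can localize: multiply $F_j$ by a smooth cutoff $\chi$ supported in $B_{g_0}(p, R)$, apply the parabolic weak maximum principle to $\chi F_j$ on $B_{g_0}(p, R) \times [0, T]$ using corollary \ref{a-priori-est-dir} to control the boundary contributions, and pass $R \to \infty$. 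Either route gives $\sup_M F_j(t) \leq c(n, j, k_0)$, which yields the desired uniform pointwise bound on $\nabla^{0,j} g$ and closes the induction.
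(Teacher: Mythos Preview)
Your overall strategy---Shi/Bernstein interior estimates plus localization---is the same as the paper's. The paper, following Simon more literally, uses for $j=1$ the multiplicative auxiliary $F := t\,\phi\,|\nabla^0 g|^2_{g_0}$ with $\phi := a + \sum_k \lambda_k^m(t)$ (the $\lambda_k$ being the eigenvalues of $g$ with respect to $g_0$), which produces a $-F^2/(2t)$ term and hence a quadratic inequality at a maximum; your additive $F_j$ is an acceptable variant of this and is not the issue.

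The genuine gap is in the localization on the noncompact $M$, and this is precisely the point the paper isolates as the only new difficulty beyond Simon's argument: the drift $X^0 = \nabla^0 f_0$ is \emph{unbounded} in the expanding case. Omori--Yau is not available as you state it, since you do not know a priori that $F_j$ is bounded on $M$---that is exactly what you are proving. Your reference to Corollary~\ref{a-priori-est-dir} for ``boundary contributions'' is also misplaced: that corollary gives $C^0$ closeness in Dirichlet problems, not derivative control, and $\chi F_j$ vanishes on the boundary anyway. What you actually need at a maximum of $\chi F_j$ is a uniform-in-$R$ bound on $\langle X^0,\nabla^0\chi\rangle$ and on $g^{-1}\ast\nabla^{0,2}\chi$, and you have not supplied one. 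The paper resolves this by taking the cutoff adapted to the potential function itself, $\eta(x):=\tilde\eta\bigl(\sqrt{f_0(x)}/r\bigr)$: then
\[
\langle\nabla^0 f_0,\nabla^0\eta\rangle=\frac{\tilde\eta'}{r}\cdot\frac{|\nabla^0 f_0|^2}{2\sqrt{f_0}},
\]
which is bounded uniformly in $r$ by the soliton identity $|\nabla f_0|^2\leq f_0+C$; likewise $\nabla^{0,2}\eta$ is uniformly controlled because $\nabla^{0,2}f_0=\Ric(g_0)+g_0/2$ is bounded. With this cutoff the polynomial inequality at the max of $\eta F$ has coefficients depending only on $n,k_0,t,1/r$, and one lets $r\to\infty$.
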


\begin{proof}
The proof is almost the same as the proof of theorem $4.3$ in \cite{Sim-C0-Def}. The only thing that could cause trouble is the presence of the vector field $\nabla^0 f_0$ that is unbounded in the expanding case. But it turns out that it does not by considering adequate cut-off functions built with the help of the potential function. To convince the reader, we reproduce the proof for the first covariant derivative in the expanding case.
 \begin{itemize}
\item First of all, by using commutation formula and identities on gradient Ricci solitons, one has on $M\times [0,T]$,
\begin{eqnarray*}
\partial_t\arrowvert\nabla^0g\arrowvert_{g_0}^2&\leq& g^{-1}\ast\nabla^{0,2}\arrowvert\nabla^0g\arrowvert_{g_0}^2+<\nabla^0 f_0,\nabla^0\arrowvert\nabla^0g\arrowvert_{g_0}^2>\\
&&+c_0\arrowvert\nabla^0g\arrowvert_{g_0}^2+c_0+c(n)\arrowvert\nabla^0g\arrowvert_{g_0}^4,
\end{eqnarray*}
where $c_0=c(n,k_0)$.\\
\item Secondly, define $\phi:=a+\sum_{k=1}^n\lambda_k^m(t)$ where $\lambda_k(t)$ are the eigenvalues of $g(t)$ and $m$ is an integer. Adapting the computations of Shi \cite{Shi-Def}, one gets on $M\times [0,T]$,
\begin{eqnarray*}
\partial_t\phi\leq g^{-1}\ast\nabla^{0,2}\phi+<\nabla^0f_0,\nabla^0\phi>-\frac{\phi^2}{2}+c_0,
\end{eqnarray*}
with suitable $a$ and $m$ depending on $n$ and $k_0$.\\
\item Hence, if $F(t,x):=t\phi(t,x)\arrowvert\nabla^0g\arrowvert_{g_0}^2(t,x)$, we get
\begin{eqnarray*}
\partial_tF\leq g^{-1}\ast\nabla^{0,2}F+<\nabla^0f_0,\nabla^0F>-\frac{F^2}{2t}+c_0t+\frac{F}{t},
\end{eqnarray*}
on $M\times (0,T]$.\\
\item Let $\eta:M\rightarrow[0,1]$ be a smooth compactly supported function on $M$ defined by : $\eta(x):=\tilde{\eta}(\sqrt{f_0(x)}/r)$ with $r>0$ and $\tilde{\eta}:[0,+\infty[\rightarrow[0,1]$ is a smooth compactly supported function such that
\begin{eqnarray*}
\tilde{\eta}\arrowvert_{[0,1/2]}\equiv 1,\quad \tilde{\eta}\arrowvert_{[1,+\infty[}\equiv 0,\quad \tilde{\eta}'\leq 0,\quad \frac{\tilde{\eta}'^2}{\tilde{\eta}}\leq c,\quad \tilde{\eta}''\geq -c,
\end{eqnarray*}
where $c$ is a universal constant.
Now,
\begin{eqnarray*}
\nabla^0 \eta=\frac {\tilde{\eta}'}{r}\nabla^0 \sqrt{f_0},\quad\nabla^{2,0}\eta=\frac{\tilde{\eta}''}{r^2}\nabla^0 \sqrt{f_0}\otimes\nabla^0 \sqrt{f_0}+\frac{\tilde{\eta}'}{r}\nabla^{0,2} \sqrt{f_0}.
\end{eqnarray*}
Therefore, by considering a point $(t_0,x_0)$ where the function $\eta F$ attains its maximum on $M\times[0,T]$, using the relations 
\begin{eqnarray*}
\nabla^0(\eta F)(t_0,x_0)=0, \quad \partial_t(\eta F)(t_0,x_0)\geq 0,\quad g^{-1}\ast\nabla^{0,2}(\eta F)(t_0,x_0)\leq 0,
\end{eqnarray*}
with the help of the previous differential inequality, one gets a polynomial of degree $2$ in $(\eta F)(t_0,x_0)$ with coefficients depending on $n$, $k_0$, $t$, $1/r$ and $<\nabla^0f_0,\nabla^0\eta>$ which is bounded since $f_0$ grows at most quadratically on an expanding gradient Ricci soliton. Hence the result if we restrict time to be lower than $\min\{1,T\}$.
\end{itemize}
 The higher covariant derivative estimates can be proved in a similar way : see lemma $4.2$ and theorem $4.3$ of \cite{Sim-C0-Def}.
\end{proof}

Now, we have
\begin{theo}\label{sup-est}
Assume $(M,g_0,\nabla^0f_0)$ is strictly stable. 
Assume $(g(t))_{t\in[0,T)}$ is a solution to (MRHF) with 
\begin{eqnarray*}
\|g(0)-g_0\|_{L^2_{f_0}(M,S^2T^*M)}=:I<+\infty,\quad \|g(0)-g_0\|_{L^{\infty}(M,S^2T^*M)}\leq \epsilon,
\end{eqnarray*}
 with $\epsilon$ as in proposition \ref{Lya-fct}. Then,
\begin{eqnarray*}
\|g(t)-g_0\|_{L^{\infty}(M,S^2T^*M)}\leq C(n,I,\lambda,k_0) \exp\left(-\frac{\tilde{\lambda}t}{n+2}\right),
\end{eqnarray*}
for $t\in[0,T)$.
\end{theo}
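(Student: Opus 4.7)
The plan is to upgrade the exponential $L^2_{f_0}$-decay $\|h(t)\|_{L^2_{f_0}}^2\leq I^2e^{-\tilde\lambda t}$ (furnished by the preceding corollary), with $h(t):=g(t)-g_0$, into a pointwise exponential decay via a local interpolation between the $L^2_{f_0}$-norm and uniform $C^1$-bounds. The exponent $1/(n+2)$ in the decay rate arises from the standard Gagliardo-Nirenberg inequality
\begin{eqnarray*}
\|u\|_{\infty}^{n+2}\leq C\,\|u\|_{L^2(B)}^{2}\,\|\nabla u\|_{\infty}^{n}
\end{eqnarray*}
applied on a geodesic ball $B$ of radius $r\sim\|u\|_{\infty}/\|\nabla u\|_{\infty}$.

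First I would invoke Lemma \ref{int-est} with initial time shifted to $t-1$ in order to obtain a uniform first-order bound $\|\nabla^0 h(t)\|_{\infty}\leq K=K(n,k_0)$ for every $t\geq 1$ (the range $t\in[0,1]$ being absorbed into the final constant by the initial $\epsilon$-closeness). Fixing such a $t$, I would select $x_0\in M$ nearly achieving $M_t:=\sup_M|h(t)|$ and set $r:=M_t/(2K)$, so that the first-order bound gives $|h(t)|\geq M_t/2$ throughout $B_{g_0}(x_0,r)$. Bishop--Gromov volume comparison (using $\sup_M|\Rm(g_0)|\leq k_0$) then yields $\vol_{g_0}B_{g_0}(x_0,r)\geq c(n,k_0)r^n$, while the soliton identities combined with $(\Hyp)$ give the pointwise bound $|\nabla^0 f_0|^2\leq C(1+f_0)$ and hence the oscillation estimate
\begin{eqnarray*}
\inf_{B_{g_0}(x_0,r)}f_0\geq f_0(x_0)-Cr\sqrt{1+f_0(x_0)}.
\end{eqnarray*}

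Combining these three ingredients, the interpolation reads
\begin{eqnarray*}
I^2 e^{-\tilde\lambda t}\geq \int_{B_{g_0}(x_0,r)}|h|^2 e^{f_0}\,d\mu_{g_0}\geq \frac{M_t^2}{4}\,c(n,k_0)\,r^n\,e^{f_0(x_0)-Cr\sqrt{1+f_0(x_0)}}.
\end{eqnarray*}
Substituting $r=M_t/(2K)$ and using $M_t\leq\delta$ (Theorem \ref{delta-max-sol}) to keep $r$ uniformly small, I would then observe that the quantity $-f_0(x_0)+Cr\sqrt{1+f_0(x_0)}$ is bounded above uniformly on $M$: $f_0$ is bounded below, and in the expanding case the quadratic growth of $f_0$ beats the linear-in-$\sqrt{f_0}$ correction at infinity. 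This produces $M_t^{n+2}\leq C(n,k_0,I,\lambda)\,e^{-\tilde\lambda t}$, which yields the desired decay after taking an $(n+2)$-th root.

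The main obstacle will be the handling of the weight $e^{f_0}$, absent in the unweighted Euclidean/hyperbolic analogue of \cite{Sch-Sch-Sim}: one has to choose the interpolation ball small enough that the oscillation of $f_0$ does not spoil the lower bound on the weighted volume. The crucial algebraic input is that $|\nabla^0 f_0|$ grows at most like $\sqrt{f_0}$ in the expanding case whereas $f_0$ itself grows quadratically, so the weight variation remains subdominant at infinity; in the steady case this is even easier since $|\nabla^0 f_0|$ is bounded.
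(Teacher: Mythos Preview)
Your strategy is the paper's strategy: exponential $L^2_{f_0}$-decay, uniform $C^1$-bound from Lemma \ref{int-est}, a near-maximizing point $x_0$, a ball of radius $r\sim M_t$ on which $|h|\geq M_t/2$, and a lower bound on the weighted volume of that ball giving $M_t^{n+2}\lesssim e^{-\tilde\lambda t}$. The argument and the exponent $1/(n+2)$ are exactly as in the paper.

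There is one genuine gap. Your sentence ``Bishop--Gromov volume comparison (using $\sup_M|\Rm(g_0)|\leq k_0$) then yields $\vol_{g_0}B_{g_0}(x_0,r)\geq c(n,k_0)r^n$'' is not justified: a two-sided curvature bound does \emph{not} give a lower volume bound for balls that is uniform in the center (injectivity radius may collapse at infinity). Bishop--Gromov only gives monotonicity of the volume ratio, and the paper uses it in that form: comparing $B_{g_0}(x_0,r)$ with $B_{g_0}(x_0,r_p(x_0)+1)\supset B_{g_0}(p,1)$ yields
\[
\vol_{g_0}B_{g_0}(x_0,r)\;\geq\; c\, r^n\,\Phi(r_p(x_0)),
\]
where $\Phi$ decays polynomially in the steady case ($\Ric\geq 0$) and exponentially in the expanding case ($\Ric\geq -K_0$). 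It is precisely the weight $e^{f_0(x_0)}$ that kills this extra factor: by $(\Hyp)$ and Lemma \ref{gro-pot-fct}, $f_0$ grows linearly (steady) resp.\ quadratically (expanding) in $r_p$, so $e^{f_0(x_0)}\Phi(r_p(x_0))$ is bounded below. In your write-up you invoke the growth of $f_0$ only to control its own oscillation $Cr\sqrt{1+f_0(x_0)}$ on the small ball; you must also use it to absorb $\Phi(r_p(x_0))^{-1}$. Once you make this correction, the rest of your argument goes through verbatim and matches the paper's proof.
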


\begin{proof}
Such estimates clearly holds for some interval $[0,\tau]$ where $\tau=\tau(n,\epsilon)$. By the interior estimates given by lemma \ref{int-est}, there exists a constant $c=c(n,\lambda,k_0)$ such that
\begin{eqnarray}\label{bd-cov-der}
\arrowvert\nabla^0g(t)\arrowvert_{g_0}\leq c,
\end{eqnarray}
for $t\in[\tau,T)$.
Fix $t\in[\tau,+\infty)$ and consider $m(t):=\|g(t)-g_0\|_{L^{\infty}(M,S^2T^*M)}$. Choose a point $x(t)\in M$ such that 
\begin{eqnarray*}
\frac{1}{2}\|g(t)-g_0\|_{L^{\infty}(M,S^2T^*M)}\leq\|g(t)-g_0\|_{g_0}(x(t)).
\end{eqnarray*}
Because of the bound (\ref{bd-cov-der}), we have 
\begin{eqnarray*}
\frac{1}{4}\|g(t)-g_0\|_{L^{\infty}(M,S^2T^*M)}\leq\|g(t)-g_0\|_{g_0}(y),
\end{eqnarray*}
for any $y\in B_{g_0}(x(t),m(t)/(4c))$. Therefore,
\begin{eqnarray*}
\vol_{f_0}B_{g_0}(x(t),m(t)/(4c))m(t)^2\leq 4Ie^{-\tilde{\lambda}t}.
\end{eqnarray*}
In the expanding case, $f$ grows quadratically in the distance to a fixed point $p$ in $M$ by $(\Hyp)$.
If $m(t)/4c\geq 1$, then, by the Bishop-Gromov theorem, if $\Ric_{g_0}\geq -(n-1)K_0$ on $M$,
\begin{eqnarray*}
&&\vol_{f_0}B_{g_0}(x(t),m(t)/(4c))\geq \vol_{f_0}B_{g_0}(x(t),1)\\
&&\geq e^{f_0(x(t))-\sqrt{f_0(x(t))}-c_0}\vol_{g_0}B_{g_0}(x(t),1)\\
&&\geq e^{f_0(x(t))-\sqrt{f_0(x(t))}-c_0}\frac{\vol_{-K_0}B_{-K_0}(0,1)}{\vol_{-K_0}B_{-K_0}(0,r_p(x(t))+1)}\vol_{g_0}B_{g_0}(p,1)\\
&&\geq c_0>0,
\end{eqnarray*}
where $c_0=c(n,g_0)$, since the volume $\ln\vol_{-K_0}B_{-K_0}(0,r_p(x(t))+1)$ grows at most linearly in the distance to a fixed point $p$ in $M$.

If $m(t)/4c\leq 1$, then a similar argument shows that,
\begin{eqnarray*}
\vol_{f_0}B_{g_0}(x(t),m(t)/(4c))\geq c_0(m(t))^n.
\end{eqnarray*}
Finally, in any cases,
\begin{eqnarray*}
m(t)\leq c_0 I \exp\left(-\frac{\tilde{\lambda}t}{n+2}\right),
\end{eqnarray*}
for any $t\in[\tau,T)$.

A similar argument works in the steady case with non negative Ricci curvature ($K_0=0$) with the help of lemma \ref{gro-pot-fct}.

\end{proof}

We are now in a position to prove long time existence and exponential convergence to $g_0$ :

\begin{theo}\label{conv-MRHF}
Assume $(M,g_0,\nabla^0f_0)$ is strictly stable.
For any $I>0$, there exists $\epsilon(n,I,k_0,\lambda)>0$ such that the following holds. If $g(0)$ satisfies  
\begin{eqnarray*}
\|g(0)-g_0\|_{L^2_{f_0}(M,S^2T^*M)}\leq I\quad \mbox{and}\quad \|g(0)-g_0\|_{L^{\infty}(M,S^2T^*M)}\leq \epsilon,
\end{eqnarray*}
 then there exists a solution to (MRHF) such that
\begin{eqnarray*}
\|g(t)-g_0\|_{L^{\infty}(M,S^2T^*M)}\leq C(n,I,\lambda,k_0) \exp\left(-\frac{\tilde{\lambda}t}{2(n+2)}\right),
\end{eqnarray*}
for any time $t$. Moreover, for any $k\in\mathbb{N}$,
\begin{eqnarray*}
\|g(t)-g_0\|_{C^k(M,S^2T^*M)}\leq C(n,k,I,\lambda,k_0)e^{-\lambda_kt},
\end{eqnarray*}
where $\lambda_k\in\left(0,\tilde{\lambda}/(n+2)\right)$ is arbitrary. 
\end{theo}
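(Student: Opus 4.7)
The plan is to run a continuity/bootstrap argument that combines short-time existence from Theorem \ref{delta-max-sol} and its quantitative refinement Corollary \ref{amorce-tps-inf} with the exponential $L^{\infty}$ decay of Theorem \ref{sup-est}, using the latter to self-improve the $L^{\infty}$ closeness that is needed in order to keep extending the solution indefinitely. The only mildly delicate point is the hierarchy of small constants: the $L^{\infty}$ estimate of Theorem \ref{sup-est} produces a constant $C_0=C(n,I,\lambda,k_0)$ that depends on the initial weighted $L^2_{f_0}$-bound $I$ but, crucially, \emph{not} on the length of the time interval, so once we secure enough time for the exponential factor to take over, the continuity argument closes on itself.

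Concretely, I would first fix $\delta_0=\delta_0(n,k_0,\lambda)$ smaller than both the threshold $\tilde{\delta}(n,k_0)$ of Theorem \ref{delta-max-sol} and the closeness threshold of Proposition \ref{Lya-fct}, so that $\delta_0$ is admissible both for $\delta$-maximal existence and for running the weighted $L^2_{f_0}$ Lyapunov estimate. With $C_0$ as above, I then pick $T_1>0$ so large that $C_0\exp(-\tilde{\lambda}T_1/(n+2))\leq \delta_0/4$, and apply Corollary \ref{amorce-tps-inf} with this $T_1$ to obtain $\epsilon_1=\epsilon(n,T_1,\delta_0,k_0)$ such that any initial datum $\epsilon_1$-close to $g_0$ produces a $\delta_0$-maximal solution that exists and remains $\delta_0$-close on $[0,T_1]$. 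The final $\epsilon(n,I,k_0,\lambda)$ claimed in the theorem is then $\min(\epsilon_1,\delta_0)$.

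With $L^{\infty}$-closeness by $\delta_0$ on $[0,T_1]$ and $L^2_{f_0}$-closeness by $I$, Proposition \ref{Lya-fct} and then Theorem \ref{sup-est} both apply and yield $\|g(T_1)-g_0\|_{L^{\infty}}\leq \delta_0/4$. I then define
\begin{eqnarray*}
T^{\star}:=\sup\bigl\{T\geq T_1 : (MRHF)\text{ has a solution on }[0,T]\text{ with }\|g(t)-g_0\|_{L^{\infty}}\leq \delta_0\bigr\},
\end{eqnarray*}
and argue $T^{\star}=+\infty$. If, by contradiction, $T^{\star}<\infty$, Theorem \ref{delta-max-sol} extends the solution slightly beyond $T^{\star}$, while Theorem \ref{sup-est} applied on $[0,T^{\star}]$ gives $\|g(T^{\star})-g_0\|_{L^{\infty}}\leq C_0\exp(-\tilde{\lambda}T^{\star}/(n+2))\leq \delta_0/4$; thus there is strictly more room than $\delta_0$ at $T^{\star}$, contradicting maximality. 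Hence the solution is immortal, and Theorem \ref{sup-est} now holds for all $t\geq 0$, giving global exponential decay of $\|g(t)-g_0\|_{L^{\infty}}$ at rate $\tilde{\lambda}/(n+2)$, which in particular implies the rate $\tilde{\lambda}/(2(n+2))$ stated in the theorem.

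For the $C^k$ estimate, I combine the uniform a priori bound $\sup_M|\nabla^{0,j}g(t)|_{g_0}\leq c(n,j,k_0)$ from Lemma \ref{int-est} (which becomes time-uniform for $t\geq 1$ thanks to the global $L^{\infty}$-closeness just established) with the exponential $L^{\infty}$ decay of $h(t):=g(t)-g_0$, via a standard Gagliardo--Nirenberg interpolation
\begin{eqnarray*}
\|\nabla^{0,k}h(t)\|_{L^{\infty}}\leq c\,\|h(t)\|_{L^{\infty}}^{1-k/N}\,\|\nabla^{0,N}h(t)\|_{L^{\infty}}^{k/N},
\end{eqnarray*}
for any $N>k$; choosing $N$ large enough that $(1-k/N)\tilde{\lambda}/(n+2)>\lambda_k$ yields the desired $C^k$ decay at any prescribed rate $\lambda_k\in(0,\tilde{\lambda}/(n+2))$. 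The main obstacle is precisely the sequencing of constants: one must make sure that the short-time window $[0,T_1]$ guaranteed by Corollary \ref{amorce-tps-inf} lasts long enough for the $L^{\infty}$ estimate of Theorem \ref{sup-est} to strictly dominate the closeness threshold $\delta_0$, and that the $I$-dependence (and not time-dependence) of $C_0$ is what enables the continuity step. Once this is arranged, each previous result is plugged in more or less as a black box.
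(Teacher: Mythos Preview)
Your proposal is correct and follows essentially the same route as the paper: you combine Corollary \ref{amorce-tps-inf} to reach a large enough time $T_1$, then invoke Theorem \ref{sup-est} so that the exponential $L^{\infty}$ decay beats the closeness threshold and, via the $\delta$-maximal mechanism of Theorem \ref{delta-max-sol}, forces $T=+\infty$; the $C^k$ decay then comes from interpolating the $L^{\infty}$ decay against the uniform derivative bounds of Lemma \ref{int-est}. The only cosmetic difference is that the paper iterates the two-step interpolation $\|\nabla^0 T\|_{L^{\infty}}^2\leq c\,\|T\|_{L^{\infty}}(\|\nabla^{0,2}T\|_{L^{\infty}}+\|\nabla^0 T\|_{L^{\infty}})$ rather than your one-shot Gagliardo--Nirenberg inequality, but both yield the same conclusion.
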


\begin{proof}
Again, this is exactly the proof of theorem $3.4$ of \cite{Sch-Sch-Sim}. In view of theorem \ref{delta-max-sol}, the only thing to check is that this solution $(g(t))_{t\in[0,T)}$ to (MRHF) we built is $\tilde{\delta}$-close to $g_0$ for any time with the notations of theorem \ref{delta-max-sol}. If $T>0$ is given, we can find some $\epsilon(n,T,k_0)>0$ such that  $(g(t))_{t\in[0,T]}$ is $\tilde{\delta}$-close to $g_0$. Now, if $T$ is large enough, $g(t)$ remains $\tilde{\delta}$-close to $g_0$ for $t\geq T$ by theorem \ref{sup-est}. Hence the existence of the solution for any time. The $L^{\infty}$-estimate follows directly by theorem \ref{sup-est}.

By standard $L^{\infty}$-interpolation inequalities and interior estimates of lemma \ref{int-est}, one can prove the convergence of this solution to $g_0$ in the smooth sense. More precisely, one has to iterate the following inequality :
\begin{eqnarray*}
\|\nabla^{0}T\|^2_{L^{\infty}(M)}\leq c_0 \|T\|_{L^{\infty}(M)}(\|\nabla^{0,2}T\|_{L^{\infty}(M)}+\|\nabla^{0}T\|_{L^{\infty}(M)})
\end{eqnarray*}
for any tensor $T$.

\end{proof}

\section{Back to the Modified Ricci flow}
We are able to go back to the (MRF) by pulling back the solution obtained in theorem \ref{conv-MRHF} by an appropriate one-parameter family of diffeomorphisms of $M$.

\begin{proof}[Proof of theorem \ref{theo-MRF}]
Again, we follow the steps of the proof of theorem $4.1$ of \cite{Sch-Sch-Sim}.
\begin{itemize}
\item First, define $(\psi_t)_t$ to be the flow of the time-dependent vector field $-V(g(t))$ defined in (\ref{de-turck-vect}). By its very definition, $(\psi_t^*g(t))_t$ solves the (MRF) with initial condition $g(0)$. This flow is defined on $[0,+\infty)$ since the vector field $-V(g(t))$ is decaying exponentially in time.
\item Again, as $-V(g(t))$ is decaying exponentially in time., we have for $0\leq t\leq s$, and $x\in M$,
\begin{eqnarray*}
d_{g_0}(\psi_t(x),\psi_{s}(x))\leq\int_t^s\arrowvert V(g(u))\arrowvert_{g_0}(x)du\leq c\int_t^se^{-\lambda_1 u}du.
\end{eqnarray*}
where $c=c(n,\lambda,k_0)$ is independent of $x\in M$. Therefore, there exists a continuous map $\psi_{\infty}:M\rightarrow M$ such that
\begin{eqnarray*}
d_{g_0}(\psi_t(x),\psi_{\infty}(x))\leq c\int_{t}^{+\infty}e^{-\lambda_1 u}du.
\end{eqnarray*}
To prove higher regularity and global $C^k$ convergence, we use the fact that the higher covariant derivatives of the vector field $V$ are exponentially decaying in time uniformly in space.
\item Proving that $\psi_{\infty}$ is actually a diffeomorphism is a bit more involved : 
\begin{itemize}
\item $\psi_{\infty}$ is a local diffeomorphism. Indeed, if $\tilde{g}(t):=\psi_t^*g(t)$ then $(\tilde{g}(t))_t$ solves the (MRF) and $\partial_t\tilde{g}(t)$ decay exponentially  in time uniformly in space. Therefore, $\tilde{g}(t)$ smoothly converges to a smooth metric $g_{\infty}$ on $M$. In particular, on a fixed geodesic ball for $g_0$, and $t$ large enough,
\begin{eqnarray*}
\frac{1}{2}g_{\infty}\leq \psi_t^*g(t)\leq 2\psi_t^*g_0.
\end{eqnarray*}
By letting $t$ go to $+\infty$, we get the result.
\item $\psi_{\infty}$ is bijective. Indeed, it suffices to apply the above arguments to the inverse flow $(\psi_t^{-1})_t$ to prove the existence of a smooth map $\tilde{\psi}_{\infty}:M\rightarrow M$ such that for any $x$ in $M$, $\tilde{\psi}_{\infty}\circ\psi_{\infty}(x)=x=\psi_{\infty}\circ\tilde{\psi}_{\infty}(x)$.
\end{itemize}   
\end{itemize}
\end{proof}

\section{Spectrum of Laplace operators}
\subsection{Preliminaries}
Let $(M,g,\nabla f)$ be a steady or an expanding gradient Ricci soliton satisfying $(\Hyp)$. Let $E$ be a geometric tensor bundle over $M$ in the sense of \cite{Lee-Hyp} : we will only consider the cases where $E$ is either the trivial line bundle over $M$ either the space of symmetric two tensors. We will denote by $\,^{f}H_{k}^{p}(M,E)$ the Banach space of all locally integrable sections $h$ of $E$ such that $\nabla^ih$ is in $L^2(M,E\otimes T^iM)$ for $0\leq i\leq k$, with the norm
\begin{eqnarray*}
 \,^{f}\| h\|_{k,p}:=\left(\sum_{i=0}^k\int_M\arrowvert\nabla^ih\arrowvert^2d\mu_f \right)^{1/2}.
\end{eqnarray*}
We also denote $\,^{f}{\overset{\circ}{H_{k}^p}}(M,E)$ the completion of the set of smooth compactly supported sections of $E$, $C_{0}^{\infty}(M,E)$, in $\,^{f}H_{k}^{p}(M,E)$. Finally, let's introduce the following a priori subspaces of $\,^{f}H_{2}^{2}(M,E)$ as in \cite{Gri-Boo} : $\,^{f}W^{2}(M,E)$ denotes the Hilbert space of all locally integrable sections $h\in\,^{f}H_{1}^{2}(M,E)$ of $E$ such that $\Delta_fh$ is in $L_f^2(M,E)=\,^{f}H_{0}^{2}(M,E)$, with the norm
\begin{eqnarray*}
 \,^{f}\| h\|_{W^2}:= \,^{f}\|h\|_{1,2}+\,^{f}\|\Delta_f h\|_{0,2},
\end{eqnarray*}
and we define $$\,^{f}{\overset{\circ}{W^2}}(M,E)=\left\{h\in\,^{f}{\overset{\circ}{H_{1}^2}}(M,E)\quad|\quad \Delta_fh\in L_f^2(M,E)\right\}$$ equipped with the induced norm denoted by $\,^{f}\| \cdot\|_{\overset{\circ}{W^2}}.$

Firstly, we recall a fundamental result of Grigor'yan [Theorem $4.6$ , \cite{Gri-Boo}] on extension of weighted operators on weighted manifolds (we quote it in the case of gradient Ricci solitons) :
\begin{theo}(Grigor'yan)
Let $(M,g,\nabla f)$ be a gradient Ricci soliton. Then the operator $-\Delta_f|_{C_0^{\infty}(M,E)}$ admits a unique self-adjoint extension to $\,^{f}{\overset{\circ}{W^2}}(M,E)$ whose domain is contained in $\,^{f}{\overset{\circ}{W^2}}(M,E)$. 
\end{theo}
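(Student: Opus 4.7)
The plan is to invoke the Friedrichs extension theorem for non-negative symmetric operators on a Hilbert space. The first step is to verify that $-\Delta_f$ is symmetric and non-negative on $C_{0}^{\infty}(M,E)$ with respect to the weighted inner product of $L_f^2(M,E)$. Writing $d\mu_f=e^fd\mu(g)$ and noting that $\Delta_f u = e^{-f}\div(e^f\nabla u)$ on functions (with the analogous twisted rough-Laplacian formula on the geometric tensor bundle $E$), a single integration by parts yields
\begin{equation*}
-\int_M\langle\Delta_f u,v\rangle\,d\mu_f=\int_M\langle\nabla u,\nabla v\rangle\,d\mu_f=:Q(u,v)
\end{equation*}
for all $u,v\in C_0^{\infty}(M,E)$. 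This shows that $-\Delta_f|_{C_0^{\infty}}$ is symmetric and non-negative, and that the quadratic form $Q$ is closable, with closure $\bar Q$ having form domain $\,^{f}{\overset{\circ}{H_{1}^2}}(M,E)$ by definition of the latter.

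The second step is to produce a self-adjoint extension $A_F$ by the standard Friedrichs construction applied to $\bar Q$. This extension is characterized by the rule that $u\in D(A_F)$ iff $u\in\,^{f}{\overset{\circ}{H_{1}^2}}(M,E)$ and there exists $w\in L_f^2(M,E)$ with $\bar Q(u,v)=\langle w,v\rangle_{L_f^2}$ for every $v\in\,^{f}{\overset{\circ}{H_{1}^2}}(M,E)$; in that case $A_F u = w$. I would then identify $D(A_F)$ with $\,^{f}{\overset{\circ}{W^2}}(M,E)$. For the inclusion $D(A_F)\subset\,^{f}{\overset{\circ}{W^2}}(M,E)$, testing the identity $\bar Q(u,v)=\langle w,v\rangle_{L_f^2}$ against $v\in C_0^{\infty}(M,E)$ forces $\Delta_f u=-w$ distributionally, hence in $L_f^2$. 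For the reverse inclusion, given $u\in\,^{f}{\overset{\circ}{W^2}}(M,E)$, I would approximate $u$ in $\,^{f}H_1^2$ by a sequence $u_n\in C_0^{\infty}(M,E)$ and pass to the limit in the identity $Q(u_n,v)=-\int_M\langle\Delta_f u_n,v\rangle\,d\mu_f$ (obtained by IBP, valid since $v\in C_0^{\infty}$) to conclude that $\bar Q(u,v)=-\int_M\langle\Delta_f u,v\rangle\,d\mu_f$ extends continuously in $v$ to $L_f^2$, so $A_F u=-\Delta_f u$.

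The third step is uniqueness, which is automatic from Friedrichs theory: $A_F$ is the only self-adjoint extension of $-\Delta_f|_{C_{0}^{\infty}}$ whose domain lies inside the form domain $\,^{f}{\overset{\circ}{H_{1}^2}}(M,E)$. Since $\,^{f}{\overset{\circ}{W^2}}(M,E)\subset\,^{f}{\overset{\circ}{H_{1}^2}}(M,E)$ by definition, any self-adjoint extension whose domain is contained in $\,^{f}{\overset{\circ}{W^2}}(M,E)$ has domain inside the form domain and therefore coincides with $A_F$.

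The main technical point — not really an obstacle, but the one place where care is needed — is the distributional IBP that identifies $D(A_F)=\,^{f}{\overset{\circ}{W^2}}(M,E)$; here the gradient Ricci soliton structure plays no special role beyond guaranteeing that the weight $e^f$ is smooth and strictly positive, so that $d\mu_f$ and $d\mu(g)$ define locally equivalent measures and the distributional notion of $\Delta_f u\in L_f^2$ coincides with the $L^2$ notion on compact subsets.
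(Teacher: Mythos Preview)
The paper does not supply its own proof of this statement: it is quoted verbatim as Theorem~4.6 from Grigor'yan's book on weighted manifolds, so there is nothing in the paper to compare your argument against. Your Friedrichs-extension approach is the standard one (and is essentially what Grigor'yan does): symmetry and non-negativity of $-\Delta_f$ on $C_0^\infty$ give a closable form with form domain $\,^{f}{\overset{\circ}{H_1^2}}$, the Friedrichs extension $A_F$ is the unique self-adjoint extension with domain inside that form domain, and its operator domain is identified with $\,^{f}{\overset{\circ}{W^2}}$.

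One small point worth tightening in your reverse inclusion $\,^{f}{\overset{\circ}{W^2}}\subset D(A_F)$: you approximate $u$ by $u_n\in C_0^\infty$ in $\,^{f}H_1^2$ and then write ``pass to the limit in $Q(u_n,v)=-\int\langle\Delta_f u_n,v\rangle\,d\mu_f$'', but $\Delta_f u_n$ need not converge in $L^2_f$ under $H^1_f$-convergence of $u_n$. The clean way is to integrate by parts once more (both $u_n$ and $v$ are in $C_0^\infty$) to get $Q(u_n,v)=-\int\langle u_n,\Delta_f v\rangle\,d\mu_f$, pass to the limit in $u_n$, and then use that $\Delta_f u\in L^2_f$ distributionally to rewrite the limit as $-\int\langle\Delta_f u,v\rangle\,d\mu_f$; density of $C_0^\infty$ in the form domain then finishes the argument. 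This is a cosmetic fix, not a genuine gap.
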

 
 Actually, we are able to precise what is $\,^{f}{\overset{\circ}{W^2}}(M,E)$  :
 \begin{prop}
 Let $(M,g,\nabla f)$ be a steady or an expanding gradient Ricci soliton satisfying $(\Hyp)$. Then
 \begin{enumerate}
 \item $\,^{f}H_{k}^{p}(M,E)=\,^{f}{\overset{\circ}{H_{k}^p}}(M,E)$, for any nonnegative finite integers $k,p$,
 \item $\,^{f}W^2(M,E)=\,^{f}{\overset{\circ}{W^2}}(M,E),$
 \item$\,^{f}H_{2}^{2}(M,E)=\,^{f}{\overset{\circ}{W^2}}(M,E)$.
 \end{enumerate}
 \end{prop}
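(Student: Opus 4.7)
The plan is to establish (1) by a cutoff-and-mollification argument adapted to the weighted measure $d\mu_f$, then derive (2) and (3) from a weighted Bochner identity. For (1), the essential step is the construction of compactly supported cutoffs with controlled weighted gradients. In the steady case, the soliton identity $R + |\nabla f|^2 = \mathrm{const.}$ combined with $(\Hyp)$ gives $|\nabla f|$ uniformly bounded; in the expanding case, $|\nabla f|^2 = f - R + \mathrm{const.}$ together with $(\Hyp)$ yields $|\nabla\sqrt{f}|$ uniformly bounded outside a compact set. Let $\tilde\chi:[0,+\infty)\to[0,1]$ be a standard cutoff equal to $1$ on $[0,1/2]$ and vanishing outside $[0,1]$, and set $\chi_R := \tilde\chi(f/R)$ in the steady case, $\chi_R := \tilde\chi(\sqrt f/R)$ in the expanding case. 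Then $\sup_M|\nabla^j\chi_R|_g \to 0$ as $R\to +\infty$ for every $j\geq 1$. Given $h\in\,^{f}H_k^p(M,E)$, expanding $\nabla^j(\chi_R h - h)$ via the Leibniz rule and applying dominated convergence shows $\|\chi_R h - h\|_{k,p}\to 0$; a subsequent mollification step, performed in harmonic coordinates (which exist at a uniform scale by the bounded curvature assumption in $(\Hyp)$), smooths $\chi_R h$ without destroying the estimate.

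For (2) and (3), the key tool is the weighted Weitzenb\"ock identity on sections of the geometric bundle $E$. Integrating by parts against $d\mu_f$, using the soliton relation $\Hess f = \Ric + \epsilon g/2$ and the uniform bound $\sup_M|\Rm(g)|\leq k_0$ from $(\Hyp)$, yields the a priori estimate
\[
\|\nabla^2 h\|^2_{L^2_f}\leq \|\Delta_f h\|^2_{L^2_f} + C(n,k_0)\bigl(\|\nabla h\|^2_{L^2_f} + \|h\|^2_{L^2_f}\bigr)
\]
for every $h\in C_0^\infty(M,E)$, where the constant absorbs the Bakry-\'Emery tensor $\Ric_f = 2\Ric + \epsilon g/2$ and the bounded curvature endomorphism appearing in the Weitzenb\"ock remainder. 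By (1) this extends to every $h\in\,^{f}\overset{\circ}{W^2}(M,E)$ and gives $\,^{f}\overset{\circ}{W^2}\subset\,^{f}H_2^2$. The reverse inclusion $\,^{f}H_2^2\subset\,^{f}\overset{\circ}{W^2}$ is obtained by writing $\Delta_f h = \Delta h + \nabla_{\nabla f}h$ and reusing the same Bochner identity to control the drift term in $L^2_f$, combined with (1) at $k=2$. This proves (3); then (2) follows from the chain $\,^{f}\overset{\circ}{W^2}\subset\,^{f}W^2\subset\,^{f}H_2^2 = \,^{f}\overset{\circ}{W^2}$, the middle inclusion being the global Bochner estimate applied to $\chi_R h$ and passed to the limit via (1).

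The main difficulty lies in the expanding case, where $\nabla f$ grows linearly in the distance and thereby obstructs the pointwise inclusion $\,^{f}H_2^2\hookrightarrow\,^{f}W^2$: from $\nabla h\in L^2_f$ alone one cannot conclude that $\nabla_{\nabla f}h\in L^2_f$. The Bochner identity circumvents this by reassembling the drift, after integration by parts against $d\mu_f$, into the Bakry-\'Emery tensor $\Ric_f$, which for expanders contains the bounded-below summand $g/2$ that absorbs the sign. The same principle dictates the cutoff choice $\tilde\chi(\sqrt f/R)$ rather than one based on the distance function: thanks to $|\nabla f|^2\lesssim f$, one checks that $|\nabla f|\,|\nabla\chi_R|\lesssim 1$ on the transition region $\{R/2\leq \sqrt f\leq R\}$, so the drift does not spoil the density argument in (1). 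In the steady case with $|\nabla f|$ already bounded no such subtlety arises, so both cases reduce to the same scheme under $(\Hyp)$.
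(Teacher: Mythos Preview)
Your approach is essentially the same as the paper's: cutoffs built from the potential function for part (1), and the weighted Bochner formula for part (3). Two minor points are worth noting. First, the paper uses $\phi_k:=\psi(f/k)$ uniformly in both the steady and expanding cases (the scaling by $k$ already makes $\nabla^i\phi_k$ bounded on the annulus $\{k\leq f\leq 2k\}$ even when $|\nabla f|\sim\sqrt f$), whereas you introduce the variant $\tilde\chi(\sqrt f/R)$ in the expanding case; both choices work. Second, the Weitzenb\"ock remainder for the Bochner formula applied to $\nabla h$ contains a $\nabla\Rm(g)\ast h$ term, so to obtain a constant depending only on $n$ and $k_0$ you need a bound on $\nabla\Rm(g)$; the paper invokes Shi's estimates for this (available since the soliton generates a Ricci flow with bounded curvature), and you should make this explicit rather than absorbing it into ``the bounded curvature endomorphism''. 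Finally, observe that with the paper's definitions, item (2) is an immediate consequence of (1) at $k=1$, $p=2$, since $\,^{f}\overset{\circ}{W^2}$ is defined as the set of $h\in\,^{f}\overset{\circ}{H_1^2}$ with $\Delta_f h\in L^2_f$; your chain argument through (3) is correct but more elaborate than necessary.
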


\begin{proof}
Let $\psi:\mathbb{R}\rightarrow[0,1]$ be a smooth function such that $\psi(t)=1$ if $t\leq 1$ and $\psi(t)=0$ if $t\geq 2$. Define $$\phi_k(x):=\psi\left(\frac{f(x)}{k}\right),$$
for $k>0$ and $x\in M$. Because of $(\Hyp)$, $\phi_k$ is smooth with compact support in $\{f\leq 2k\}$. Moreover, by the soliton identities \ref{sol-id-egs} and \ref{sol-id-sgs}, one can check that $\nabla^i\phi_k$, for $i>0$, is bounded on $M$ with compact support in $\{k\leq f\leq 2k\}$. 
\begin{enumerate}
\item Let $h\in \,^{f}H_{k}^{p}(M,E)$. Then $(\phi_k\cdot h)_k$ converges in $ \,^{f}H_{k}^{p}(M,E)$ by the previous remarks. For any $k\in\mathbb{N}^*$, $\phi_k\cdot h$ has compact support. Therefore, by a convolution argument, one can approximate, for each $k$, $\phi_k\cdot h$ by a smooth compactly supported tensor $h_k$ in the usual unweighted Sobolev space $H_{k}^{p}(M,E)$ hence in the weighted Sobolev space $\,^{f}H_{k}^{p}(M,E)$ since the weighted and unweighted norms  are uniform on fixed compact sets of $M$.
\item The same previous argument applies by using the following remark : $$\Delta_f\phi_k=\Delta_g\phi_k+\psi'\frac{\arrowvert\nabla f\arrowvert^2}{k},$$ is bounded by the soliton identities \ref{sol-id-egs} and \ref{sol-id-sgs} and has compact support in $\{k\leq f\leq 2k\}$.
\item Finally, according to what we proved, it suffices to prove $\,^{f}\overset{\circ}{H_{2}^{2}}(M,E)=\,^{f}{\overset{\circ}{W^2}}(M,E)$. This is achieved with the help of Bochner formulas : 
\begin{eqnarray*}
\Delta_f\arrowvert\nabla h\arrowvert^2=2\arrowvert\nabla^2h\arrowvert^2+2<\nabla\Delta_fh,\nabla h>+<\nabla\Rm(g)\ast h+\nabla h\ast \Rm(g),\nabla h>,
\end{eqnarray*}
for any $h\in C^{\infty}(M,E)$. If $h\in C_0^{\infty}(M,E)$, then an integration by part gives :
\begin{eqnarray*}
\,^{f}\|\Delta_f h\|_{0,2}=\,^{f}\|\nabla^2 h\|_{0,2}+\,^{f}<\nabla\Rm(g)\ast h+\nabla h\ast \Rm(g),\nabla h>_{0,2},
\end{eqnarray*}
which gives the expected result since $\Rm(g)$ is bounded by assumption $(\Hyp)$ and so is $\nabla\Rm(g)$ by Shi's estimates \cite{Shi-Def}.

\end{enumerate}
\end{proof}

\subsection{Spectrum of steady gradient Ricci solitons}
\subsubsection{Spectrum of $\Delta_f$}

We begin by establishing bounds on the bottom of the laplacian $\Delta_f$ on a non trivial steady gradient Ricci soliton. We recall first a useful tool to bound the spectrum of a weighted laplacian on a Riemannian manifold from below as soon as a positive eigenfunction exists : this is a simple modification of lemma $7.6$ of \cite{Lee-Hyp} which uses integration by part only.

\begin{lemma}\label{bd-spec-pos}
 Let $(M,g)$ be a Riemannian manifold endowed with a $C^1$ potential function $f$ on $M$. If there exists a positive $C^2$ function $\phi$ on $M$ such that $\Delta_f\phi\leq-\lambda \phi$ for a real number $\lambda$, then $\lambda_1(\Delta_f)\geq\lambda$.
\end{lemma}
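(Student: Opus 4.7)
The plan is to run the classical Allegretto--Piepenbrink substitution, adapted to the weighted setting. By the variational characterization of $\lambda_1(\Delta_f)$ together with the density of $C_0^{\infty}(M)$ in $\,^{f}{\overset{\circ}{W^2}}(M)$ established in the preceding proposition, it is enough to prove
\[
\int_M |\nabla\psi|^2\,d\mu_f\geq\lambda\int_M\psi^2\,d\mu_f
\]
for every $\psi\in C_0^{\infty}(M)$.

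The key trick is to factor $\psi=\phi u$ with $u:=\psi/\phi$; since $\phi$ is $C^2$ and strictly positive and $\psi$ is compactly supported, $u$ is a legitimate element of $C_0^{\infty}(M)$. First I would expand
\[
|\nabla(\phi u)|^2=\phi^2|\nabla u|^2+u^2|\nabla\phi|^2+2u\phi\,\langle\nabla u,\nabla\phi\rangle,
\]
and eliminate the cross term by using the weighted product rule
\[
\div_{f}(u^2\phi\,\nabla\phi)=u^2\phi\,\Delta_f\phi+u^2|\nabla\phi|^2+2u\phi\,\langle\nabla u,\nabla\phi\rangle,
\]
where $\div_{f}X:=\div X+\langle\nabla f,X\rangle$ is the divergence adapted to the measure $d\mu_f=e^f\,d\mu$, so that $\div_{f}(\nabla\phi)=\Delta_f\phi$. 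Substitution yields the pointwise identity
\[
|\nabla(\phi u)|^2=\phi^2|\nabla u|^2-u^2\phi\,\Delta_f\phi+\div_{f}(u^2\phi\,\nabla\phi).
\]

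After integrating against $d\mu_f$ the divergence term drops by compact support, and the hypothesis $\Delta_f\phi\leq -\lambda\phi$, together with $\phi>0$ and $u^2\geq 0$, gives
\[
\int_M |\nabla\psi|^2\,d\mu_f\geq \int_M\phi^2|\nabla u|^2\,d\mu_f+\lambda\int_M u^2\phi^2\,d\mu_f\geq\lambda\int_M\psi^2\,d\mu_f,
\]
which is the desired inequality.

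There is really no obstacle: the argument consists of one algebraic substitution and one integration by parts, and the soliton structure plays no role whatsoever --- consistently with the fact that the lemma is phrased for an arbitrary $C^1$ potential on a general Riemannian manifold. The only minor point worth flagging is that the substitution $u=\psi/\phi$ must produce an admissible test function, which is immediate from the $C^2$ regularity and positivity of $\phi$.
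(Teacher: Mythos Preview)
Your argument is correct and is precisely the integration-by-parts computation the paper has in mind (the paper does not spell out a proof but refers to Lee's Lemma~7.6, which is exactly this Barta/Allegretto--Piepenbrink substitution). Two cosmetic remarks: since $\phi$ is only $C^2$, the quotient $u=\psi/\phi$ lies in $C_0^2(M)$ rather than $C_0^\infty(M)$, which is still ample regularity for the divergence theorem you use; and the appeal to the ``preceding proposition'' on density is both unnecessary and out of scope here, since the lemma is stated for an arbitrary weighted manifold and the variational characterization $\lambda_1(\Delta_f)=\inf_{\psi\in C_0^\infty,\psi\neq 0}\int_M|\nabla\psi|^2\,d\mu_f\big/\int_M\psi^2\,d\mu_f$ holds in that generality by definition of the Friedrichs extension.
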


  The following is a simple adaptation of \cite{Mun-Wan} :

\begin{prop}\label{Mun-Wan-Har-Sgs}
Let $(M,g,\nabla f)$ be a steady gradient Ricci soliton. Then, for any compactly supported functions $\phi$ and any $\alpha\in(0,1]$,
\begin{eqnarray*}
\int_{M}\left(\alpha^2R+\lambda(g)\alpha(1-\alpha)\right)\phi^2d\mu_f\leq\int_{M}\arrowvert\nabla\phi\arrowvert^2d\mu_f.
\end{eqnarray*}
\end{prop}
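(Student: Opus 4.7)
The plan is to exploit two standard identities satisfied by a steady gradient Ricci soliton---the trace $\Delta f=R$ of the soliton equation $\nabla^2 f=\Ric$, and the conservation law $R+\arrowvert\nabla f\arrowvert^2\equiv \lambda(g)$ (constant)---together with a single weighted integration by parts and a Young inequality with a tunable parameter.

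First, I would rewrite the scalar-curvature term using $R=\Delta f$ and integrate by parts against the weighted measure $d\mu_f=e^fd\mu(g)$ to obtain
\begin{eqnarray*}
\int_M \phi^2 R\, d\mu_f = -2\int_M \phi\langle \nabla\phi,\nabla f\rangle\, d\mu_f - \int_M \phi^2\arrowvert\nabla f\arrowvert^2\, d\mu_f.
\end{eqnarray*}
Next, I would apply the Young inequality $\arrowvert 2\phi\langle\nabla f,\nabla\phi\rangle\arrowvert\leq \alpha \phi^2\arrowvert\nabla f\arrowvert^2 + \alpha^{-1}\arrowvert\nabla\phi\arrowvert^2$ (valid for any $\alpha>0$) to control the cross term, and then multiply the resulting inequality by $\alpha$, arriving at
\begin{eqnarray*}
\alpha\int_M \phi^2 R\, d\mu_f + \alpha(1-\alpha)\int_M \phi^2\arrowvert\nabla f\arrowvert^2\, d\mu_f \leq \int_M \arrowvert\nabla\phi\arrowvert^2\, d\mu_f.
\end{eqnarray*}
Finally, substituting $\arrowvert\nabla f\arrowvert^2=\lambda(g)-R$ from the soliton conservation law collects the coefficient of $\phi^2 R$ into $\alpha-\alpha(1-\alpha)=\alpha^2$ and leaves the remaining term as $\alpha(1-\alpha)\lambda(g)\phi^2$, which is precisely the asserted estimate.

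There is no serious obstruction. The identity $R+\arrowvert\nabla f\arrowvert^2\equiv\lambda(g)$ is exactly what is needed to convert the $\arrowvert\nabla f\arrowvert^2$-term produced by the integration by parts into the announced linear combination $\alpha^2R+\lambda(g)\alpha(1-\alpha)$, and the restriction $\alpha\in(0,1]$ is what makes the Young weight match the algebra while keeping the sign structure clean (in particular, the $\arrowvert\nabla f\arrowvert^2$ term may be dropped harmlessly only once $\alpha\leq 1$). Compact support of $\phi$ ensures that no boundary contributions appear in the integration by parts, even though the measure $e^fd\mu(g)$ may blow up at infinity in the steady case.
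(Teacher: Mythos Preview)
Your proof is correct. The paper takes a slightly different route: it computes directly that
\[
\Delta_f\bigl(e^{-\alpha f}\bigr)=\bigl(-\alpha\Delta f-\alpha(1-\alpha)\arrowvert\nabla f\arrowvert^2\bigr)e^{-\alpha f}=-\bigl(\alpha^2 R+\alpha(1-\alpha)\lambda(g)\bigr)e^{-\alpha f},
\]
so that $e^{-\alpha f}$ is a positive (super)solution for $-\Delta_f$ with potential $\alpha^2 R+\alpha(1-\alpha)\lambda(g)$, and then invokes the Barta-type Lemma~\ref{bd-spec-pos} to conclude. Your argument bypasses that lemma: you perform the weighted integration by parts on $\int\phi^2\Delta f\,d\mu_f$ by hand and absorb the cross term with Young's inequality tuned to the parameter $\alpha$. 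The two are secretly the same computation---your Young inequality becomes an equality precisely when $\phi$ is a multiple of $e^{-\alpha f}$, which is exactly the paper's test function---but your packaging is more self-contained, while the paper's makes the role of the explicit barrier $e^{-\alpha f}$ transparent. One minor remark: your final parenthetical about ``dropping the $\arrowvert\nabla f\arrowvert^2$ term'' is unnecessary, since you substitute $\arrowvert\nabla f\arrowvert^2=\lambda(g)-R$ exactly rather than discarding it; in fact your computation (like the paper's) goes through for any $\alpha>0$, the range $\alpha\in(0,1]$ being merely the one where the resulting coefficient $\alpha(1-\alpha)\lambda(g)$ is nonnegative and hence useful.
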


\begin{proof}
Indeed, by the soliton identities \ref{sol-id-sgs},
\begin{eqnarray*}
\Delta_f(e^{-\alpha f})&=&\left(-\alpha\Delta f-\alpha(1-\alpha)\arrowvert\nabla f\arrowvert^2\right)e^{-f}=\left(-\alpha^2\Delta f-\alpha(1-\alpha)\lambda(g)\right)e^{-f}\\
&=&-\alpha(R+(1-\alpha)\lambda(g))e^{-\alpha f}.
\end{eqnarray*}
Then the proposition follows by applying lemma \ref{bd-spec-pos} to $\phi:=e^{-\alpha f}$.
\end{proof}

One can actually prove by hand that the Bryant soliton is strictly stable with the help of proposition \ref{Mun-Wan-Har-Sgs} :

\begin{prop}
Let $n\geq 3$.The $n$-dimensional Bryant soliton is $c$-stable where $c=c(n,\lambda(g))>0$.
\end{prop}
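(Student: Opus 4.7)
The strategy combines the weighted Hardy-type inequality of Proposition~\ref{Mun-Wan-Har-Sgs} (via Kato) with a pointwise estimate on $\Rm*$ that exploits the geometric structure of the Bryant soliton.

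\textbf{Step 1 (Kato + Mun--Wan--Har for tensors).} Let $h\in C_{0}^{\infty}(M,S^{2}T^{*}M)$. The Kato inequality $|\nabla|h||\le|\nabla h|$ holds a.e. on $\{|h|>0\}$, and after a standard regularization (replacing $|h|$ by $\sqrt{|h|^{2}+\delta^{2}}$ and letting $\delta\to 0$) we may apply Proposition~\ref{Mun-Wan-Har-Sgs} to $\phi=|h|$. For every $\alpha\in(0,1]$ this gives
\begin{eqnarray*}
\int_{M}|\nabla h|^{2}\,d\mu_{f}\;\ge\;\int_{M}\bigl(\alpha^{2}R+\alpha(1-\alpha)\lambda(g)\bigr)|h|^{2}\,d\mu_{f}.
\end{eqnarray*}

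\textbf{Step 2 (curvature term).} In an orthonormal frame diagonalizing $h$ with eigenvalues $(\lambda_{i})_{i=1}^{n}$, one has
\begin{eqnarray*}
\langle\Rm(g)* h,h\rangle=\sum_{i,k}K(e_{i},e_{k})\lambda_{i}\lambda_{k}.
\end{eqnarray*}
On the Bryant soliton the curvature operator is nonnegative and the metric is rotationally symmetric, so the only nonzero sectional curvatures are the radial one $K_{\mathrm{rad}}=-\phi''/\phi$ and the tangential one $K_{\mathrm{tan}}=(1-(\phi')^{2})/\phi^{2}$, both controlled by the scalar curvature $R=2(n-1)K_{\mathrm{rad}}+(n-1)(n-2)K_{\mathrm{tan}}$. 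From this explicit block structure, one extracts a pointwise bound
\begin{eqnarray*}
2\,|\langle\Rm(g)* h,h\rangle|\;\le\;\mu(n)\,R\,|h|^{2},
\end{eqnarray*}
for some dimensional constant $\mu(n)\in(0,1)$. (The generic bound $\mu(n)=O(n)$ obtained from crude diagonalisation is \emph{not} sufficient; the gain $\mu(n)<1$ relies on the rotational block decomposition of $\Rm$ acting on $\Sym^{2}(T^{*}M)$ together with the nonnegativity of every block.)

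\textbf{Step 3 (conclusion).} Subtracting the two estimates,
\begin{eqnarray*}
\int_{M}\langle Lh,h\rangle\,d\mu_{f}\;\ge\;\int_{M}\bigl((\alpha^{2}-\mu(n))R+\alpha(1-\alpha)\lambda(g)\bigr)|h|^{2}\,d\mu_{f}.
\end{eqnarray*}
Choose $\alpha:=\sqrt{\mu(n)}\in(0,1)$. Since $R\ge 0$ on the Bryant soliton, the $R$-term is nonnegative and we obtain
\begin{eqnarray*}
\int_{M}\langle Lh,h\rangle\,d\mu_{f}\;\ge\;\sqrt{\mu(n)}\bigl(1-\sqrt{\mu(n)}\bigr)\lambda(g)\int_{M}|h|^{2}\,d\mu_{f},
\end{eqnarray*}
which is the desired strict stability with $c=c(n,\lambda(g))>0$ (recall $\lambda(g)=R+|\nabla f|^{2}>0$ is the soliton constant, positive on any non-flat steady soliton).

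\textbf{Main obstacle.} The delicate point is Step~2: the generic estimate $|\langle\Rm* h,h\rangle|\le C(n)R|h|^{2}$ obtained merely from $\Rm\ge 0$ has $C(n)$ of order $n$, too large to beat the coefficient $\alpha^{2}\le 1$ coming from Proposition~\ref{Mun-Wan-Har-Sgs}. The gain $\mu(n)<1$ is essentially a consequence of the warped product structure of the Bryant soliton, and this is where rotational symmetry is genuinely used.
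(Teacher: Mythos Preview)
Your overall strategy (Kato plus Proposition~\ref{Mun-Wan-Har-Sgs}) is the same as the paper's, but Step~2 contains a genuine gap: the claimed uniform bound $2|\langle\Rm*h,h\rangle|\le\mu(n)\,R\,|h|^2$ with $\mu(n)<1$ is \emph{false} on the Bryant soliton when $n=3$. Writing $a$ for the radial sectional curvature and $b$ for the tangential one, the diagonal test tensor $h=\mathrm{diag}(0,1,\dots,1)$ satisfies
\[
\frac{2\langle\Rm*h,h\rangle}{R\,|h|^2}
=\frac{2(n-2)b}{2(n-1)a+(n-1)(n-2)b}\cdot\frac{n-1}{n-1}
=\frac{2(n-2)b}{(n-1)\bigl(2a+(n-2)b\bigr)}.
\]
For $n=3$ this equals $b/(2a+b)$. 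On the Bryant paraboloid $\phi\sim c\sqrt{r}$, so $a=-\phi''/\phi\sim r^{-2}$ while $b=(1-\phi'^2)/\phi^2\sim r^{-1}$, hence $a/b\to 0$ and the ratio above tends to $1$ at infinity. Thus no dimensional constant $\mu(3)<1$ can work, and your choice $\alpha=\sqrt{\mu(n)}$ yields only the trivial bound $\int\langle Lh,h\rangle\,d\mu_f\ge 0$ in the limit. (Even for $n\ge 4$ you have not verified the off-diagonal blocks, but the $n=3$ case already breaks the argument as stated.)

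The paper avoids this obstruction by not seeking a single $\alpha$. It combines \emph{two} applications of Proposition~\ref{Mun-Wan-Har-Sgs}: with $\alpha=1$ one gets, via the sharper pointwise inequality $R|h|^2-2\Rm(h,h)\ge c_1(n)\,a\,|h|^2$ (this is exactly the computation of Proposition~\ref{prop-stab-rot-sym}), a lower bound that is \emph{positive on every compact set} since $a>0$ there; with $\alpha=1/2$ one gets $\tfrac{\lambda(g)}{4}-c_2(n)|\Rm|$, which is \emph{positive outside a compact set} since $|\Rm|\to 0$. A convex combination of the two inequalities then produces a uniform positive lower bound. The point is that the sharp pointwise estimate degenerates like $a$ (not like $R$), and one needs the second inequality to cover the region where $a\ll b$.
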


\begin{proof}
As in the proof of proposition \ref{prop-stab-rot-sym}, we have, for $n\geq 3$,
\begin{eqnarray*}
R\arrowvert h\arrowvert^2-2\Rm(h,h)\geq2\sqrt{n-1}(\sqrt{n-1}-1)a\arrowvert h\arrowvert^2=:c_1(n)a\arrowvert h\arrowvert^2.
\end{eqnarray*}
Hence, by the Hardy inequality \ref{Mun-Wan-Har-Sgs} for $\alpha=1$,
\begin{eqnarray*}
\int_{M}\arrowvert \nabla h\arrowvert^2-2\Rm(h,h)d\mu_f&\geq&\int_{M}c_1(n)a\arrowvert h\arrowvert^2d\mu_f,
\end{eqnarray*}
and for $\alpha=1/2$,
\begin{eqnarray*}
\int_{M}\arrowvert \nabla h\arrowvert^2-2\Rm(h,h)d\mu_f&\geq&\int_{M}\left(\frac{\lambda(g)}{4}-c_2(n)\arrowvert\Rm\arrowvert\right)\arrowvert h\arrowvert^2d\mu_f.
\end{eqnarray*}
Outside a compact set $K$, $\frac{\lambda(g)}{4}-c_2(n)\arrowvert\Rm\arrowvert\geq\frac{\lambda(g)}{8}$ and $\inf_{K}a>0$. Define 
\begin{eqnarray*}
A:=\max\left(0,\frac{1}{2}-\frac{\inf_{K}\left(\frac{\lambda(g)}{4}-c_2(n)\arrowvert\Rm\arrowvert\right)}{\inf_{K}c_1(n)a}\right). 
\end{eqnarray*}
Therefore, by the very definition of $A$,
\begin{eqnarray*}
&&\int_{M}\arrowvert \nabla h\arrowvert^2-2\Rm(h,h)d\mu_f\geq\\
&&\frac{1}{1+A}\int_{M}\left(\frac{\lambda(g)}{4}-c_2(n)\arrowvert\Rm\arrowvert+Ac_1(n)a\right)\arrowvert h\arrowvert^2d\mu_f\\
&&\geq\frac{1}{1+A}\int_{K}\left(\inf_{K}\left(\frac{\lambda(g)}{4}-c_2(n)\arrowvert\Rm\arrowvert\right)+A\inf_{K}c_1(n)a\right)\arrowvert h\arrowvert^2d\mu_f\\
&&+\frac{\lambda(g)}{8(1+A)}\int_{M\setminus K}\arrowvert h\arrowvert ^2d\mu_f\\
&&\geq\frac{1}{1+A}\int_{K}\frac{\inf_{K}c_1(n)a}{2}\arrowvert h\arrowvert^2d\mu_f+\frac{\lambda(g)}{8(1+A)}\int_{M\setminus K}\arrowvert h\arrowvert ^2d\mu_f\\
&&\geq c(n,\lambda(g))\int_{M}\arrowvert h\arrowvert^2d\mu_f,
\end{eqnarray*}
where $$c(n,\lambda(g)):=\min\left(\frac{\inf_{K}c_1(n)a}{2(1+A)},\frac{\lambda(g)}{8(1+A)}\right).$$
\end{proof}

We pursue our investigation of the discrete spectrum of $\Delta_f$ on functions. The following identity is a simple adaptation of the arguments of \cite{Don-Gar} :

\begin{lemma}\label{Id-spec}
Let $(M,g)$ be a Riemannian manifold. Let $\phi$, $\psi$ and $f$ be smooth functions on $M$. Then,
\begin{eqnarray*}
\div_f\left(2<\nabla_{\nabla \phi}\psi,\nabla \psi>-\arrowvert\nabla \psi\arrowvert^2\nabla\phi\right)&=&2<\nabla_{\nabla\phi}\psi,\Delta_f\psi>\\
&&+2\nabla^2\phi(\nabla \psi,\nabla \psi)-\arrowvert\nabla \psi\arrowvert^2\Delta_f\phi.
\end{eqnarray*}
\end{lemma}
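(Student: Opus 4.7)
The identity is a purely computational one, so the plan is just to expand both sides carefully. First I would interpret the vector field on the left-hand side as
\begin{eqnarray*}
X := 2\langle\nabla\phi,\nabla\psi\rangle\nabla\psi - |\nabla\psi|^2\nabla\phi,
\end{eqnarray*}
and use the standard identity $\div_f(X) = \div(X) + \langle \nabla f,X\rangle$ to reduce to computing the ordinary (unweighted) divergence of $X$, and then adding the $f$-correction at the end.

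Next, I would apply the Leibniz rule $\div(uY) = u\,\div Y + \langle \nabla u,Y\rangle$ term by term. For the first piece this gives
\begin{eqnarray*}
\div\bigl(2\langle\nabla\phi,\nabla\psi\rangle\nabla\psi\bigr) = 2\langle\nabla\phi,\nabla\psi\rangle\Delta\psi + 2\langle\nabla\langle\nabla\phi,\nabla\psi\rangle,\nabla\psi\rangle.
\end{eqnarray*}
Expanding $\nabla_i\langle\nabla\phi,\nabla\psi\rangle = \nabla_i\nabla_k\phi\,\nabla_k\psi + \nabla_k\phi\,\nabla_i\nabla_k\psi$, the inner product with $\nabla\psi$ produces precisely the term $\nabla^2\phi(\nabla\psi,\nabla\psi)$ plus $\tfrac{1}{2}\langle\nabla\phi,\nabla|\nabla\psi|^2\rangle$. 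For the second piece,
\begin{eqnarray*}
\div(|\nabla\psi|^2\nabla\phi) = |\nabla\psi|^2\Delta\phi + \langle\nabla|\nabla\psi|^2,\nabla\phi\rangle.
\end{eqnarray*}
The two occurrences of $\langle\nabla\phi,\nabla|\nabla\psi|^2\rangle$ cancel exactly, leaving
\begin{eqnarray*}
\div(X) = 2\langle\nabla\phi,\nabla\psi\rangle\Delta\psi + 2\nabla^2\phi(\nabla\psi,\nabla\psi) - |\nabla\psi|^2\Delta\phi.
\end{eqnarray*}

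Finally, I would add the weight term
\begin{eqnarray*}
\langle\nabla f,X\rangle = 2\langle\nabla\phi,\nabla\psi\rangle\langle\nabla f,\nabla\psi\rangle - |\nabla\psi|^2\langle\nabla f,\nabla\phi\rangle
\end{eqnarray*}
and regroup it with the corresponding unweighted Laplacians. This converts $\Delta\psi$ into $\Delta_f\psi$ in the first term and $\Delta\phi$ into $\Delta_f\phi$ in the third, while the Hessian term $\nabla^2\phi(\nabla\psi,\nabla\psi)$ is untouched. This yields the stated identity.

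There is no real obstacle here; the only thing to be careful about is the bookkeeping that ensures the two $\tfrac{1}{2}\langle\nabla\phi,\nabla|\nabla\psi|^2\rangle$ contributions (one from differentiating $\langle\nabla\phi,\nabla\psi\rangle$ along $\nabla\psi$, one from expanding $\div(|\nabla\psi|^2\nabla\phi)$) cancel, which is exactly the reason for the factor $2$ in front of the first term in $X$.
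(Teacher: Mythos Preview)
Your proof is correct and is exactly the ``routine computation'' the paper alludes to (the paper's own proof consists of the single line ``Routine computations''). The only slight imprecision is in your closing remark: after the factor $2$ is applied, the cross term from the first piece is the full $\langle\nabla\phi,\nabla|\nabla\psi|^2\rangle$, not $\tfrac{1}{2}$ of it, and it cancels the full $\langle\nabla|\nabla\psi|^2,\nabla\phi\rangle$ from the second piece; your computation itself already reflects this correctly.
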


\begin{proof}
Routine computations.
\end{proof}
The following corollary will not be used for the sequel but is interesting in itself :

\begin{coro}
Let $(M,g,\nabla f)$ be a steady gradient Ricci soliton with nonnegative Ricci curvature, and positive Ricci curvature at one point. Then $\sigma_{disc}(\Delta_f)=\emptyset.$
\end{coro}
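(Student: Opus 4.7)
The approach is a Donnelly--Garofalo-type $L^2$ argument applying the identity of Lemma~\ref{Id-spec} with the specific choice $\phi=f$. In the steady case the soliton identities yield $\nabla^2 f = \Ric$ and, by Hamilton's constancy identity $R + \arrowvert\nabla f\arrowvert^2 = \lambda(g)$, $\Delta_f f = \lambda(g)$ is a constant. Substituting $\phi=f$ in Lemma~\ref{Id-spec} therefore reduces it to
\[
\div_f\bigl(2\langle\nabla f,\nabla\psi\rangle\nabla\psi - \arrowvert\nabla\psi\arrowvert^2\nabla f\bigr) = 2\langle\nabla f,\nabla\psi\rangle\Delta_f\psi + 2\Ric(\nabla\psi,\nabla\psi) - \lambda(g)\arrowvert\nabla\psi\arrowvert^2.
\]

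Suppose for contradiction that a nonzero $\psi\in L^2_f(M)$ satisfies $\Delta_f\psi = -\mu\psi$ for some $\mu\in\mathbb{R}$. By the domain proposition proven just above, $\psi\in\,^{f}\overset{\circ}{W^2}(M)$, so $\nabla\psi\in L^2_f$ and, by standard integration by parts in $\,^{f}\overset{\circ}{W^2}$, $\int_M\arrowvert\nabla\psi\arrowvert^2\,d\mu_f = \mu\int_M\psi^2\,d\mu_f$. Multiply the displayed identity by a cutoff $\chi_R$, equal to $1$ on $B(p,R)$, supported in $B(p,2R)$, with $\arrowvert\nabla\chi_R\arrowvert\le C/R$, and integrate against $d\mu_f$. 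Hamilton's identity $\arrowvert\nabla f\arrowvert^2 = \lambda(g)-R \le \lambda(g)$ makes $\nabla f$ uniformly bounded, so the boundary term $-\int\langle\nabla\chi_R,X\rangle\,d\mu_f$ (where $X$ is the vector field inside $\div_f$) is controlled by $C'R^{-1}\int_M\arrowvert\nabla\psi\arrowvert^2\,d\mu_f$ and vanishes as $R\to\infty$. On the right, inserting $\Delta_f\psi=-\mu\psi$ and integrating by parts with the help of $\div_f(\nabla f)=\lambda(g)$, the first term passes in the limit to $\mu\lambda(g)\int_M\psi^2\,d\mu_f$; this cancels against $-\lambda(g)\int_M\arrowvert\nabla\psi\arrowvert^2\,d\mu_f = -\lambda(g)\mu\int_M\psi^2\,d\mu_f$, leaving
\[
\int_M\Ric(\nabla\psi,\nabla\psi)\,d\mu_f = 0.
\]

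Since $\Ric\ge 0$, this forces $\Ric(\nabla\psi,\nabla\psi)\equiv 0$. At the distinguished point $p_0$, positive definiteness of $\Ric(p_0)$ extends to a neighborhood, so $\nabla\psi\equiv 0$ there. Real-analyticity of $\psi$ (an elliptic eigenfunction on the real-analytic soliton manifold) then forces $\psi\equiv\mathrm{const}$ on the connected manifold $M$, and the eigenvalue equation forces $\mu=0$. Finally, under the standing $(\Hyp)$ of this section one has $R\to 0$ at infinity, so $\arrowvert\nabla f\arrowvert\to\sqrt{\lambda(g)}>0$ (the strict positivity of $\lambda(g)$ following from $R>0$ at one point, itself a consequence of Hamilton's strong maximum principle applied to the scalar curvature under Ricci flow); hence $f$ grows linearly at infinity and the weighted volume $\int_M e^f\,d\mu$ is infinite. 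A nonzero constant cannot therefore lie in $L^2_f$, contradicting the existence of $\psi$.

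The main technical subtlety is the cutoff-integration-by-parts step and the vanishing of the boundary contribution as $R\to\infty$; this exploits crucially the boundedness of $\arrowvert\nabla f\arrowvert$, a feature special to the steady case. An analogous statement in the expanding setting would be significantly more delicate since there $\arrowvert\nabla f\arrowvert$ grows linearly, forcing the use of a different cutoff adapted to $\sqrt{f}$ (as done elsewhere in the paper).
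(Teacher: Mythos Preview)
Your argument is correct and follows the same route as the paper: apply Lemma~\ref{Id-spec} with $\phi=f$, use the steady soliton identities $\nabla^2 f=\Ric$ and $\Delta_f f=\lambda(g)$, integrate, and obtain $\int_M\Ric(\nabla\psi,\nabla\psi)\,d\mu_f=0$, then conclude via nonnegativity of $\Ric$, strict positivity at a point, and real-analyticity of eigenfunctions. You are in fact more careful than the paper on two points---justifying the integrations by parts via a cutoff exploiting $\arrowvert\nabla f\arrowvert\le\sqrt{\lambda(g)}$, and explicitly ruling out nonzero constants in $L^2_f$ (the paper passes directly from $\nabla\psi=0$ on an open set to ``$\psi\equiv 0$'').
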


\begin{proof}
Assume there exists $\lambda\in\mathbb{R}$ such that $\Delta_f\psi=-\lambda\psi$ with $\psi\in L^2_f$.
Integrate the identity given by \ref{Id-spec} to $\phi=f$, and $\psi$. Then,
\begin{eqnarray*}
\int_{M}2\nabla^2f(\nabla\psi,\nabla\psi)d\mu_f&=&\int_{M}\arrowvert\nabla \psi\arrowvert^2\Delta_ff-2<\nabla f,\nabla\psi>\Delta_f\psi d\mu_f\\
&=&\int_{M}\arrowvert\nabla \psi\arrowvert^2\Delta_ff+2\lambda<\nabla f,\nabla\psi>\psi d\mu_f\\
&=&\int_{M}\arrowvert\nabla \psi\arrowvert^2\Delta_ff+\lambda<\nabla f,\nabla\psi^2> d\mu_f\\
&=&\int_{M}(\arrowvert\nabla \psi\arrowvert^2-\lambda\psi^2)\Delta_ff\\
&=&\lambda(g)\int_{M}\arrowvert\nabla \psi\arrowvert^2-\lambda\psi^2 d\mu_f\\
&=&0.
\end{eqnarray*}
 Therefore, $\nabla\psi=0$ on a neighborhood of a point in $M$. As $g$ and $f$ are analytic by \cite{Ban-Ana}, $\psi$ is analytic too, hence $\psi\equiv0$ everywhere.
\end{proof}

\subsubsection{Spectrum of the Lichnerowicz operator}\label{Spec-Lic-op-sgs}

We first begin by analysing the essential spectrum of the Lichnerowicz operator $L$ when the curvature tends to zero at infinity.

\begin{prop}\label{ess-spec-sgs}
Let $(M,g,\nabla f)$ be a steady gradient Ricci soliton with curvature going to zero at infinity, i.e.
\begin{eqnarray*}
\lim_{+\infty}\arrowvert\Rm(g)\arrowvert=0.
\end{eqnarray*}
Then $\sigma_{ess}(L)\subset\left[\lambda_1(\Delta_f),+\infty\right)$.
\end{prop}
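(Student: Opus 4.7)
My plan is to establish the inclusion $\sigma_{ess}(L) \subset [\lambda_1(\Delta_f),+\infty)$ via a Persson-type localization of the essential spectrum, reducing the question on symmetric two-tensors to the bottom of the scalar weighted Laplacian through Kato's inequality. More precisely, I will exploit the fact that the essential spectrum only sees the behaviour of $L$ at infinity, where the curvature perturbation $-2\Rm(g)\ast$ becomes negligible.

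\textbf{Step 1 (Persson-type formula).} I would first recall (or quickly re-derive) that for a self-adjoint, semibounded extension of a Schrödinger-type operator on a geometric bundle,
\begin{equation*}
\inf\sigma_{ess}(L)=\sup_{K\Subset M}\,\lambda_1^K(L),\qquad \lambda_1^K(L):=\inf_{h\in C_0^{\infty}(M\setminus K,\Sym^2T^*M)}\frac{\int_M\langle Lh,h\rangle\,d\mu_f}{\|h\|_{L^2_f}^2}.
\end{equation*}
The self-adjointness of $-\Delta_f$ on $\Sym^2T^*M$ is provided by Grigor'yan's theorem quoted in the previous subsection, and the bounded perturbation $-2\Rm(g)\ast$ remains self-adjoint by assumption $\Hyp$.

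\textbf{Step 2 (curvature is small at infinity).} Fix $\varepsilon>0$. By the hypothesis $\lim_{+\infty}|\Rm(g)|=0$, there is a compact set $K_\varepsilon\Subset M$ such that $|\Rm(g)|\leq\varepsilon$ outside $K_\varepsilon$. For $h\in C_0^\infty(M\setminus K_\varepsilon,\Sym^2T^*M)$, an integration by parts and the pointwise bound $|\langle\Rm(g)\ast h,h\rangle|\leq c_n|\Rm(g)||h|^2$ give
\begin{equation*}
\int_M\langle Lh,h\rangle\,d\mu_f\ \geq\ \int_M|\nabla h|^2\,d\mu_f\ -\ c_n\varepsilon\int_M|h|^2\,d\mu_f.
\end{equation*}

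\textbf{Step 3 (Kato's inequality and reduction to the scalar Laplacian).} I now invoke Kato's inequality $|\nabla h|\geq|\nabla|h||$ pointwise a.e., which is standard for tensor sections and remains valid in the weighted setting (it is a pointwise statement, unaffected by the weight). Since $h\in C_0^{\infty}$, the function $|h|$ is Lipschitz with compact support, hence lies in $\,^{f}{\overset{\circ}{H_1^2}}(M)$. The Rayleigh quotient characterization of $\lambda_1(\Delta_f)$ on functions (valid by density of $C_0^\infty$ in $\,^{f}{\overset{\circ}{H_1^2}}(M)$, as established in the preceding proposition) yields
\begin{equation*}
\int_M|\nabla h|^2\,d\mu_f\ \geq\ \int_M|\nabla|h||^2\,d\mu_f\ \geq\ \lambda_1(\Delta_f)\int_M|h|^2\,d\mu_f.
\end{equation*}
Combining with Step 2, $\lambda_1^{K_\varepsilon}(L)\geq\lambda_1(\Delta_f)-c_n\varepsilon$. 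Letting $\varepsilon\to 0$ and appealing to Step 1 gives $\inf\sigma_{ess}(L)\geq\lambda_1(\Delta_f)$, which is the claim.

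\textbf{Main obstacle.} The delicate points are not in the arithmetic but in the functional-analytic setup: verifying that a Persson-type formula is available for $L$ acting on the bundle $\Sym^2T^*M$ with the weighted measure $d\mu_f$ (one could alternatively bypass this by running a direct Weyl-sequence argument, using weak-$L^2_f$ convergence of an approximate eigentensor $h_k\rightharpoonup 0$ combined with local Rellich compactness to force $\int\langle\Rm(g)\ast h_k,h_k\rangle d\mu_f\to 0$), and the justification that Kato's inequality truly reduces the problem to the scalar spectral gap — here the weight is harmless because Kato's inequality is pointwise and because $|h|$ inherits the compact support of $h$, so that the unweighted density arguments proved in the previous proposition apply.
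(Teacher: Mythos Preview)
Your argument is correct and follows essentially the same route as the paper: isolate a compact set outside of which $|\Rm(g)|\leq\varepsilon$, use Kato's inequality to bound $\int_M|\nabla h|^2\,d\mu_f\geq\lambda_1(\Delta_f)\int_M|h|^2\,d\mu_f$ for compactly supported tensors, and conclude that the quadratic form of $L-\lambda$ is uniformly positive on $C_0^\infty(M\setminus K_\varepsilon)$ for any $\lambda<\lambda_1(\Delta_f)$. The only cosmetic difference is that the paper packages the last step through a Fredholm criterion (Lee's Lemma~4.10 in \cite{Lee-Hyp}: a uniform lower bound $\,^f\|(L-\lambda)h\|_{0,2}\geq c\,^f\|h\|_{0,2}$ outside a compact set forces $L-\lambda$ to be Fredholm) rather than via Persson's formula, but the two devices are equivalent here and the underlying estimate is identical.
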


\begin{proof}
The proof is a straightforward adaptation of lemma $4.10$ of \cite{Lee-Hyp}. For the convenience of the reader, we will state its precise statement.  The next claim gives a sufficient condition to ensure the Fredholm property of $L-\lambda \Id$ for $\lambda\in\mathbb{R}$.

\begin{claim}\label{Fred-prop}
$L-\lambda\Id : \,^{f}H_{2}^{2}(M,S^2T^*M)\rightarrow \,^{f}H_{2}^{2}(M,S^2T^*M)$ is Fredholm if there exists a compact set $K\subset M$ and a constant $c>0$ such that the following holds for any $h\in C_0^{\infty}(M\setminus K,S^2T^*M)$ : 
\begin{eqnarray*}
c \,^{f}\|h\|_{0,2}\leq \,^{f}\|(L-\lambda\Id)h\|_{0,2}.
\end{eqnarray*}
\end{claim}

With the claim \ref{Fred-prop} in hand, we can now finish the proof of proposition \ref{ess-spec-sgs}. Indeed, let $\lambda$ be any real number strictly less than $\lambda_1(\Delta_f)$ and let $\epsilon >0$. Take any compact set $K_{\epsilon}\subset M$ such that $\sup_{M\setminus K_{\epsilon}}\arrowvert\Rm(g)\arrowvert\leq \epsilon$. Then, for any $h\in C_0^{\infty}(M\setminus K_{\epsilon},S^2T^*M)$,
\begin{eqnarray*}
\,^{f}<(L-\lambda\Id)h,h>_{0,2}\geq (\lambda_1(\Delta_f)-\lambda -c(n)\epsilon)\,^{f}<h,h>_{0,2},
\end{eqnarray*}
that is, by the Cauchy-Schwarz inequality,
\begin{eqnarray*}
 \,^{f}\|(L-\lambda\Id)h\|_{0,2}\geq (\lambda_1(\Delta_f)-\lambda-\epsilon) \,^{f}\|h\|_{0,2}.
\end{eqnarray*}
Hence the result by invoking the claim \ref{Fred-prop}, if $\epsilon$ is sufficiently small.
\end{proof}

We need now to investigate the discrete spectrum of the Lichnerowicz operator. For that purpose, it is necessary to control the decay of an eigenvector of this operator at infinity. This is achieved with a technique initiated by Agmon \cite{Agm-Dec}. The following theorem is an ad-hoc adaptation of his arguments in the setting of Riemannian manifolds. Nonetheless, the presentation is inspired by \cite{Li-Wan-Poi}.

\begin{theo}(Agmon type estimate)\label{Agmon-steady}
Let $(M,g,\nabla f)$ be a steady gradient Ricci soliton with nonnegative Ricci curvature and bounded curvature normalized such that $\lambda(g)=1$. Assume $\lim_{+\infty}R=0$.

Let $h\in L_f^2(M,S^2T^*M)$ be an eigenvector of  $L$ associated to an eigenvalue $\lambda$ with $\lambda<\lambda_{ess}(L)$, where $\lambda_{ess}(L)$ is the bottom of the essential spectrum of $L$. Then $h$ has exponential decay. More precisely,
\begin{eqnarray*}
e^{\alpha_{\epsilon} f+f/2}h\in L^{\infty}(M,S^2T^*M),
\end{eqnarray*}
where $\alpha_{\epsilon}^2:=\lambda_{ess}(L)-\lambda-\epsilon$, for any $\epsilon$ small enough.
\end{theo}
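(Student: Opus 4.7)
The plan is to adapt Agmon's exponential-weight method to the weighted setting. The central idea is to test the eigenvalue equation $Lh=\lambda h$ against $e^{2\phi}h$ with weight $\phi:=\alpha_\epsilon f$, then balance the pointwise bound on $|\nabla\phi|$ against the spectral gap between $\lambda$ and $\lambda_{ess}(L)$. The choice $\phi=\alpha_\epsilon f$ is dictated by the steady soliton identity $|\nabla f|^2+R=\lambda(g)=1$: together with $R\geq 0$ this yields $|\nabla\phi|^2=\alpha_\epsilon^2(1-R)\leq \alpha_\epsilon^2=\lambda_{ess}(L)-\lambda-\epsilon$ pointwise, with near-equality at infinity since $R\to 0$. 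The $\epsilon$ of slack will be absorbed by Persson's characterization of the essential spectrum.

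The key identity, obtained by a direct integration by parts against $d\mu_f$, reads
\begin{equation*}
\int\langle L(e^\phi h),e^\phi h\rangle\,d\mu_f=\int e^{2\phi}\langle Lh,h\rangle\,d\mu_f+\int|\nabla\phi|^2 e^{2\phi}|h|^2\,d\mu_f
\end{equation*}
whenever $\phi$ is smooth and bounded and $h$ is compactly supported. To apply this to $\phi=\alpha_\epsilon f$, I would regularize by $\phi_\beta:=\min(\alpha_\epsilon f,\beta)$ (so that $e^{\phi_\beta}h$ stays in $L^2_{f}$) and multiply by a spatial cutoff $\eta_R$ vanishing on a compact set $K_{\epsilon/2}$ chosen via Persson so that $\langle Lv,v\rangle_{L^2_f}\geq(\lambda_{ess}(L)-\epsilon/2)\|v\|^2_{L^2_f}$ for every $v\in C^\infty_0(M\setminus K_{\epsilon/2})$. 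Testing the identity on $w:=\eta_R e^{\phi_\beta}h$ and using $Lh=\lambda h$, I expect to reach
\begin{equation*}
\tfrac{\epsilon}{2}\int|w|^2\,d\mu_f\leq\text{error terms from }|\nabla\eta_R|\text{ and from the truncation locus }\{\alpha_\epsilon f\geq\beta\}.
\end{equation*}
Passing $\beta\to\infty$ first destroys the truncation error since $\nabla\phi_\beta=0$ there; then $R\to\infty$ destroys the cutoff error because $h\in L^2_{f}$ implies tail vanishing. The outcome is $e^{\alpha_\epsilon f}h\in L^2_{f}$, equivalently $e^{(\alpha_\epsilon+1/2)f}h\in L^2(M)$ unweighted.

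To upgrade this weighted $L^2$ control to global $L^\infty$, I would use the Bochner-type formula
\begin{equation*}
\Delta_f|h|^2=2|\nabla h|^2-4\Rm(g)(h,h)-2\lambda|h|^2
\end{equation*}
to show $|h|^2$ is a subsolution of a uniformly elliptic linear equation whose coefficients are bounded by $(\Hyp)$ together with the bound $|\nabla f|\leq 1$. A standard Moser iteration on unit $g$-balls, with constants uniform in the center by bounded geometry, gives $\sup_{B(x,1/2)}|h|\leq C\|h\|_{L^2(B(x,1))}$. Multiplying by $e^{(\alpha_\epsilon+1/2)f(x)}$ and using the Lipschitz bound $|f(y)-f(x)|\leq 1$ on $B(x,1)$ converts the right-hand side into a constant times $\|e^{(\alpha_\epsilon+1/2)f}h\|_{L^2(B(x,1))}$, which is uniformly dominated by the finite global $L^2$-norm just produced.

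The main obstacle I anticipate is the careful bookkeeping of the cutoff errors in the regularized Agmon identity, specifically the commutator terms generated by $[L,\eta_R e^{\phi_\beta}]$, which contain both $|\nabla\eta_R||\nabla h|$ and transition-region contributions. The features of the steady case that make this feasible are: the \emph{global} bound $|\nabla\phi_\beta|^2\leq\alpha_\epsilon^2$ (not merely an asymptotic one), which permits the absorption step once $\eta_R\equiv 0$ on $K_{\epsilon/2}$; and the uniform bound $|\nabla f|\leq 1$, which makes the drift term in $L$ a bounded first-order perturbation and legitimates the uniform Moser iteration.
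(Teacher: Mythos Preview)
Your approach is the same Agmon--Persson scheme the paper uses, and the $L^\infty$ upgrade via Moser iteration is also the paper's. But the order of limits you announce does not work. After sending $\beta\to\infty$ first, $\phi_\beta=\alpha_\epsilon f$ on the whole support of $\eta_R$, so the outer cutoff error carries the full weight $e^{2\alpha_\epsilon f}$, i.e.\ a term like $R^{-1}\int_{\{R\le f\le 2R\}}|h|^2e^{2\alpha_\epsilon f}\,d\mu_f$. You then claim this vanishes as $R\to\infty$ because $h\in L^2_f$; but $h\in L^2_f$ only says $\int|h|^2e^{f}\,d\mu(g)<\infty$, not $\int|h|^2e^{(1+2\alpha_\epsilon)f}\,d\mu(g)<\infty$, which is precisely what you are trying to establish. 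The fix is to reverse the limits: with $\beta$ fixed the weight on the outer annulus is the bounded constant $e^{2\beta}$, so $h\in L^2_f$ genuinely kills that term; then monotone convergence in $\beta$ gives $e^{\alpha_\epsilon f}h\in L^2_f$. The paper achieves the same effect with a more elaborate weight, taking $F=\delta\alpha_\epsilon f$ on an inner region and $F=k-\alpha_\epsilon f$ beyond, so that $e^{2F}$ actually \emph{decays} at infinity and the outer error is controlled by $\|h\|_{L^2_f}$ for each fixed $k$; it then lets $k\to\infty$ and finally pushes $\delta\to 1$ by adjusting $\epsilon$.

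There is a second, smaller gap in your Moser step. You assert $\sup_{B(x,1/2)}|h|\le C\|h\|_{L^2(B(x,1))}$ with $C$ uniform in $x$ by ``bounded geometry''. The Sobolev constant is indeed uniform (Ricci bounded below), but the Moser estimate yields the \emph{averaged} $L^2$ norm, and under nonnegative Ricci alone $\vol B(x,1)$ may decay like $r_p(x)^{-n}$ (no injectivity-radius bound is assumed). The paper handles this by proving the weighted $L^2$ bound for the slightly larger exponent $\alpha_{\epsilon/2}$, leaving an exponential margin $e^{-(\alpha_{\epsilon/2}-\alpha_\epsilon)f}$ that absorbs the polynomial volume loss via Bishop--Gromov and the linear growth of $f$. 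You should budget that same slack.
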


\begin{proof}
As $\lim_{+\infty}R=0$ and $\Ric\geq 0$, the function $f$ is proper by lemma \ref{gro-pot-fct}. 
\begin{claim}
$e^{\delta\alpha_{\epsilon} f}h\in L^2_f$, for any $\delta\in(0,1)$ and $\epsilon$ positive small enough.
\end{claim}
\begin{proof}
Let $F$ be any smooth function on $M$ to be defined later and let $\phi$ be any Lipschitz function on $M$ with compact support. Then,

\begin{eqnarray*}
&&\int_{M}\arrowvert\nabla(e^F\phi h)\arrowvert^2-2\Rm(g)(e^F\phi h,e^F\phi h)d\mu_f=\\
&&\int_M\arrowvert\nabla(\phi h)\arrowvert^2e^{2F}+\phi^2\arrowvert h\arrowvert^2\arrowvert\nabla F\arrowvert^2e^{2F}+\frac{1}{2}<\nabla(\phi^2 \arrowvert h\arrowvert^2),\nabla e^{2F}>d\mu_f\\
&&-\int_M2\Rm(g)(e^F\phi h,e^F\phi h)d\mu_f\\
&=&-\int_M\langle\phi h,\Delta_f(\phi h)+2<\nabla(\phi h),\nabla F>\rangle e^{2F}+\frac{1}{2}\phi^2\arrowvert h\arrowvert^2\Delta_f(e^{2F})-\phi^2\arrowvert h\arrowvert^2\arrowvert\nabla F\arrowvert^2e^{2F}d\mu_f\\
&&-\int_M2\Rm(g)(e^F\phi h,e^F\phi h)d\mu_f\\
&=&\int_M-\langle\phi h,\Delta_f(\phi h)\rangle e^{2F}+\phi^2 \arrowvert h\arrowvert^2\arrowvert\nabla F\arrowvert^2e^{2F}d\mu_f-\int_M2\Rm(g)(e^F\phi h,e^F\phi h)d\mu_f\\
&=&\int_M(\lambda+\arrowvert\nabla F\arrowvert^2)\phi^2 \arrowvert h\arrowvert^2e^{2F}+\left(<\nabla F,\nabla\phi^2>+\arrowvert\nabla \phi\arrowvert^2\right)\arrowvert h\arrowvert^2e^{2F}d\mu_f.
\end{eqnarray*}
Now, by definition of $\lambda_{ess}(L)$, for any $\epsilon>0$, there exists a compact $K_{\epsilon}\subset M$ such that one has the following inequality, for any $\phi\in C_0^{\infty}(M\setminus K_{\epsilon})$ :
\begin{eqnarray*}
(\lambda_{ess}(L)-\epsilon)\int_M\phi^2 \arrowvert h\arrowvert^2e^{2F}d\mu_f&\leq&\int_M(\lambda+\arrowvert\nabla F\arrowvert^2)\phi^2 \arrowvert h\arrowvert^2e^{2F}d\mu_f\\
&&+\int_M(<\nabla F,\nabla\phi^2>+\arrowvert\nabla \phi\arrowvert^2)\arrowvert h\arrowvert^2e^{2F}d\mu_f.
\end{eqnarray*}

 Fix now a large radius $R_0$ such that  $K_{\epsilon}\subset \{f\leq R_0\}$. Define $\alpha_{\epsilon}^2:=\lambda_{ess}(L)-\epsilon-\lambda>0$, for $\epsilon$ small enough.
 Fix also a number $k$ such that $k\geq (R_0+1)(1+\delta)\alpha_{\epsilon}$ and a radius $R$ larger than $k/{(1+\delta)\alpha_{\epsilon}}$.
Define now on $\{f\geq R_0\}$ :
\[
F =
\left\{
\begin{array}{rl}
\delta\alpha_{\epsilon} f & \mbox{if } \alpha_{\epsilon} f \leq \frac{k}{1+\delta} \\ \\

k-\alpha_{\epsilon} f & \mbox{if } \alpha_{\epsilon} f\geq\frac{k}{1+\delta} 
\end{array}
\right.\quad
\phi =
\left\{
\begin{array}{rl}
 f-R_0 & \mbox{if } R_0\leq f\leq R_0+1 \\ \\
1 & \mbox{if } R_0+1\leq f\leq R\\\\
\frac{2R-f}{R} & \mbox{if} \quad R\leq f\leq 2R\\\\
0 & \mbox{otherwise}.
\end{array}
\right.
\]
Plugging this in the previous inequality lets us to estimate each term :
\begin{eqnarray*}
&&\int_{f\geq R_0}\arrowvert\nabla\phi\arrowvert^2\arrowvert h\arrowvert^2e^{2F}d\mu_f\leq\int_{R_0\leq f\leq R_0+1}\arrowvert\nabla f\arrowvert^2 \arrowvert h\arrowvert^2e^{\delta \alpha_{\epsilon} f}d\mu_f\\
&&+\frac{1}{R^2}\int_{R\leq f\leq 2R}\arrowvert\nabla f\arrowvert^2\arrowvert h\arrowvert^2e^{2(k-\alpha_{\epsilon} f)}d\mu_f,\\
&&\int_{f\geq R_0}\arrowvert\nabla F\arrowvert^2\phi^2\arrowvert h\arrowvert^2e^{2F}d\mu_f\leq\int_{R_0\leq f\leq k/{(1+\delta)\alpha_{\epsilon}}}\delta^2\alpha_{\epsilon}^2\arrowvert\nabla f\arrowvert^2\phi^2 \arrowvert h\arrowvert^2e^{2\delta\alpha_{\epsilon} f}d\mu_f\\
&&+\int_{k/{(1+\delta)\alpha_{\epsilon}}\leq f\leq2R}\alpha_{\epsilon}^2\arrowvert\nabla f\arrowvert^2\phi^2\arrowvert h\arrowvert^2e^{2(k-\alpha_{\epsilon} f)}d\mu_f,\\
&&2\int_{f\geq R_0}<\nabla F,\nabla\phi>\phi \arrowvert h\arrowvert^2e^{2F}d\mu_f\leq2\int_{R_0\leq f\leq R_0+1}\delta\alpha_{\epsilon} \arrowvert\nabla f\arrowvert^2\arrowvert h\arrowvert^2e^{2\delta\alpha_{\epsilon} f }d\mu_f\\
&&+\frac{2\alpha_{\epsilon}}{R}\int_{R\leq f\leq 2R}\arrowvert \nabla f\arrowvert^2\arrowvert h\arrowvert^2e^{2(k-\alpha_{\epsilon} f)}d\mu_f.
\end{eqnarray*}

Since $\arrowvert\nabla f\arrowvert^2\leq \lambda(g)=1$,
\begin{eqnarray*}
&&\int_{R_0\leq f\leq k/{(1+\delta)\alpha_{\epsilon}}}\alpha_{\epsilon}^2(1-\delta^2)\phi^2 \arrowvert h\arrowvert^2e^{2\delta \alpha_{\epsilon} f}d\mu_f\leq\\
 &&C(R_0)+\frac{1}{R^2}\int_{R\leq f\leq 2R}\arrowvert h\arrowvert^2e^{k-\alpha_{\epsilon} f}d\mu_f+\frac{2\alpha_{\epsilon} c(g)}{R}\int_{R\leq f\leq 2R}\arrowvert h\arrowvert^2e^{2(k-\alpha_{\epsilon} f)}d\mu_f.
\end{eqnarray*}
By letting $R$ tend to $+\infty$, we have, for any $k$ sufficiently large,
\begin{eqnarray*}
\int_{R_0\leq f\leq k/{(1+\delta)\alpha_{\epsilon}}}\alpha_{\epsilon}^2(1-\delta^2)\phi^2 \arrowvert h\arrowvert^2e^{2\delta \alpha_{\epsilon} f}d\mu_f\leq C(R_0).
\end{eqnarray*}
Hence the result.
\end{proof}
\begin{claim}
$e^{\alpha_{\epsilon} f}h\in L^2_f$, for $\epsilon$ positive small enough.
\end{claim}
 By the previous claim, one has $e^{\delta\alpha_{\epsilon} f}h\in L^2_f$, for $\epsilon$ positive small enough and $\delta\in[0,1)$. Define $\delta$ to be $1-\epsilon$, then $\delta\alpha_{\epsilon}=\alpha_{N(\epsilon)}$ where $N(\epsilon)$ goes to zero as $\epsilon$ tends to zero.

\begin{claim}
$e^{\alpha_{\epsilon} f+f/2} h\in L^{\infty}(M,S^2T^*M).$
\end{claim}
\begin{proof}
As in Agmon's book \cite{Agm-Dec}, one only needs a local Sobolev inequality for the measure $d\mu_f$ to perform a Moser iteration. As the Ricci curvature is bounded from below, by the results of \cite{Sal-Cos-Uni-Ell}, one has the following local Sobolev inequality : 
\begin{eqnarray*}
\left(\frac{1}{\vol B(x,r)}\int_{B(x,r)}\arrowvert\phi\arrowvert^{\frac{2n}{n-2}}d\mu_g\right)^{\frac{n-2}{n}}\leq \left(\frac{C(r_0)r^2}{\vol B(x,r)}\int_{B(x,r)}\arrowvert\nabla\phi\arrowvert^2+r^{-2}\phi^2d\mu_g\right),
\end{eqnarray*}
for any $\phi\in C_0^{\infty}(B(x,r))$ for all $x\in M$ and $0<r<r_0$.

Now, as the oscillation of $f$ is bounded on $B(x,r)$ by a constant depending only on $r_0$, we have the following local weighted Sobolev inequality :
\begin{eqnarray*}
\left(\frac{1}{\vol_f B(x,r)}\int_{B(x,r)}\arrowvert\phi\arrowvert^{\frac{2n}{n-2}}d\mu_f\right)^{\frac{n-2}{n}}\leq \left(\frac{C(r_0)r^2}{\vol_f B(x,r)}\int_{B(x,r)}\arrowvert\nabla\phi\arrowvert^2+r^{-2}\phi^2d\mu_f\right),
\end{eqnarray*}
for any $\phi\in C_0^{\infty}(B(x,r))$ for all $x\in M$ and $0<r<r_0$, where $\vol_f B(x,r):=\int_{B(x,r)} d\mu_f$.
Now, $\arrowvert h\arrowvert$ satisfies, in the weak sense,
\begin{eqnarray*}
\Delta_f\arrowvert h\arrowvert\geq -c(\lambda,\arrowvert\Rm(g)\arrowvert_{L^{\infty}})\arrowvert h\arrowvert.
\end{eqnarray*}
Therefore, by performing a Moser iteration, we have,
\begin{eqnarray*}
\sup_{B(x,r/2)}\arrowvert h\arrowvert\leq C(\lambda,\arrowvert\Rm(g)\arrowvert_{\infty},r_0)\left(\frac{1}{\vol_fB(x,r)}\int_{B(x,r)}\arrowvert h\arrowvert^2d\mu_f\right)^{1/2}
\end{eqnarray*}
for all $x\in M$ and any $0<r<r_0$. Using the previous claim, we get,
\begin{eqnarray*}
\sup_{B(x,r/2)}e^{\alpha_{\epsilon} f+f/2}\arrowvert h\arrowvert&\leq& C(\lambda,\arrowvert\Rm(g)\arrowvert_{\infty},r_0,\epsilon)\left(\frac{e^{(1-\epsilon/(2\alpha_0))f(x)}}{\vol_fB(x,r)}\int_{B(x,r)}\arrowvert e^{\alpha_{\epsilon/2} f}h\arrowvert^2d\mu_f\right)^{1/2}\\
&\leq&C(\lambda,\arrowvert\Rm(g)\arrowvert_{\infty},r_0,\epsilon)\frac{e^{(1-\epsilon/(2\alpha_0))f(x)}}{\vol_fB(x,r)}\| e^{\alpha_{\epsilon/2} f}h\|_{L^2_f}.
\end{eqnarray*}

Now, by the Bishop-Gromov comparison theorem, as the Ricci curvature is non negative,

\begin{eqnarray*}
\vol_f B(x,r)&\geq& C(r)e^{f(x)} \vol B(x,r)\\
&\geq& C(r)e^{f(x)}\frac{\vol B(x,r)}{\vol B(x, r_p(x)+r)}\vol B(p,r)\\
&\geq& C(p,r)e^{f(x)}\left(\frac{r}{r+r_p(x)}\right)^n,
\end{eqnarray*}
for some point $p\in M$.
Since $f\geq cr_p(x)$ outside a compact set, we have 
\begin{eqnarray*}
\sup_{x\in M}\frac{e^{(1-\epsilon/(2\alpha_0))f(x)}}{\vol_fB(x,r)}\leq C(p,r,\epsilon,\alpha_0),
\end{eqnarray*}
 for some fixed point $p\in M$.
 Hence the exponential decay of $h$.

\end{proof}
\end{proof}

We are now in a position to prove theorem  \ref{spec-sgs-pos-cur} :

\begin{proof}[Proof of theorem \ref{spec-sgs-pos-cur}]
Let $h\in L_f^2(M,S^2T^*M)$ be an eigenvector of  $L$ associated to an eigenvalue $-\lambda$ with $\lambda\geq 0$. We claim that $h=0$.

\begin{claim}
$\sup_{M_{\leq t}}\arrowvert h\arrowvert\leq \frac{\lambda(g)}{\inf_{M_t}\Ric}\sup_{M_t}\arrowvert h\arrowvert.$
\end{claim}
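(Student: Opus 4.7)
The plan is to apply a maximum-principle / comparison argument on the compact sublevel set $M_{\leq t}$. Under $(\Hyp)$, namely $\Ric \geq 0$ together with $R\to 0$ at infinity, lemma \ref{gro-pot-fct} ensures that $f$ is proper, so $M_{\leq t}$ is compact with $\partial M_{\leq t} = M_t$ for regular values of $t$ (a full-measure set by Sard).

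From the eigentensor equation $Lh = -\lambda h$ with $\lambda\geq 0$, one rewrites $\Delta_f h = \lambda h - 2\Rm\ast h$, and the Bochner computation yields
\begin{equation*}
\Delta_f |h|^2 \;=\; 2|\nabla h|^2 + 2\lambda |h|^2 - 4\langle \Rm\ast h, h\rangle.
\end{equation*}
In a local orthonormal frame diagonalizing $h$ with eigenvalues $\mu_i$, the algebraic identity
\begin{equation*}
\langle \Rm\ast h, h\rangle \;=\; \sum_i \Ric_{ii}\,\mu_i^2 \;-\; \sum_{i<j} K_{ij}(\mu_i-\mu_j)^2,
\end{equation*}
together with $K_{ij}\geq 0$ (consequence of the nonneg. curvature operator) and $\Ric_{ii}\leq R$ (valid since $\Ric\geq 0$), gives the pointwise bound $\langle \Rm\ast h, h\rangle\leq R|h|^2$. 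Hence $|h|^2$ is a subsolution of
\begin{equation*}
\Delta_f |h|^2 + (4R-2\lambda)|h|^2 \;\geq\; 0 \qquad \text{on } M_{\leq t}.
\end{equation*}

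The next step is to convert this into the boundary-to-interior bound with the prescribed constant by comparison against a positive supersolution $w$ on $M_{\leq t}$, i.e.\ $\Delta_f w + (4R-2\lambda)w \leq 0$. Writing $|h|^2 = w\,v$, the ratio $v$ satisfies a divergence-form equation without zeroth-order potential, so the weak maximum principle yields $\sup_{M_{\leq t}} v \leq \sup_{M_t} v$, which translates into
\begin{equation*}
\sup_{M_{\leq t}}|h|^2 \;\leq\; \frac{\sup_{M_{\leq t}} w}{\inf_{M_t}w}\,\sup_{M_t}|h|^2.
\end{equation*}
The natural building blocks are the steady-soliton identities $\Delta_f R = -2|\Ric|^2$, $\Delta_f|\nabla f|^2 = 2|\Ric|^2$, and the conservation law $R+|\nabla f|^2\equiv\lambda(g)$; one arranges $w$ so that $\sup w \leq \lambda(g)$ on $M_{\leq t}$ and $\inf_{M_t} w \geq \inf_{M_t}\Ric$, producing exactly the factor $\lambda(g)/\inf_{M_t}\Ric$ (up to absorbing the square-root via the same argument applied to $|h|$ rather than $|h|^2$).

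The hard part is pinning down the explicit barrier $w$ realising this ratio: the appearance of $\inf_{M_t}\Ric$ on the level set strongly suggests building $w$ from a contraction of $\Ric$, exploiting the fundamental fact that $L\Ric=0$ on a steady soliton (so that a scalar component such as $\Ric(\nabla f,\nabla f)/|\nabla f|^2$ is a natural candidate whose infimum on $M_t$ dominates the smallest eigenvalue of $\Ric$ there, while $|\nabla f|^2\leq \lambda(g)$ provides the numerator). Once this claim is established, it will combine with the Agmon-type exponential decay of theorem \ref{Agmon-steady} and the curvature hypothesis $\liminf_{t\to+\infty}e^{\alpha t}\inf_{f=t}\Ric > 0$ with $\alpha<1$: the factor grows at most like $e^{\alpha t}$ while $\sup_{M_t}|h|$ decays faster than $e^{-(\alpha_\epsilon+1/2)t}$, forcing $\sup_M |h|=0$ as $t\to+\infty$ and hence $h\equiv 0$.
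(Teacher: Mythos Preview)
Your proposal has a genuine gap: you never actually construct the barrier $w$. You set up a scalar subsolution inequality for $|h|^2$, state the comparison principle that would follow \emph{if} a positive supersolution $w$ existed with $\sup_{M_{\leq t}}w\leq\lambda(g)$ and $\inf_{M_t}w\geq\inf_{M_t}\Ric$, and then openly say ``the hard part is pinning down the explicit barrier $w$'' before speculating about $\Ric(\nabla f,\nabla f)/|\nabla f|^2$. That candidate is problematic (it is undefined at critical points of $f$, and its value on $M_t$ is only one diagonal entry of $\Ric$, not the smallest eigenvalue), and you do not verify the supersolution inequality $\Delta_f w+(4R-2\lambda)w\leq 0$ for it or for anything else. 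As it stands, the argument is a wish list, not a proof.

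The paper's proof takes a completely different route and avoids the barrier problem altogether. The key observation is that $\Li_{\nabla f}(g)=2\Ric$ lies in $\ker L$ on a steady soliton, so for every real $\theta$ the tensor $h_\theta:=h-\theta\,\Li_{\nabla f}(g)$ satisfies $\Delta_f h_\theta+2\Rm\ast h_\theta=\lambda h_\theta+\theta\lambda\,\Li_{\nabla f}(g)$. One then runs a Brendle-type argument: set $\theta_t:=\sup\{\theta:\,h_\theta\geq 0\text{ on }M_{\leq t}\}$ and look at a point $x$ and unit vector $v$ where $h_{\theta_t}$ first touches zero. If $x\in M_t$, the relation $h(x)(v,v)=2\theta_t\Ric(x)(v,v)$ immediately gives $|\theta_t|\leq \sup_{M_t}|h|/(2\inf_{M_t}\Ric)$, and since $2\Ric\leq 2R\,g\leq 2\lambda(g)\,g$ one obtains the stated lower bound for $h$. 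If $x\in M_{<t}$, one invokes the local Hamilton strong maximum principle for tensor systems (nonnegative curvature operator is used here) to force $\ker h_{\theta_t}$ to be parallel, which by positive Ricci curvature (no splitting) gives $h_{\theta_t}\equiv 0$ and reduces again to a boundary estimate. Applying the same reasoning to $-h$ gives the two-sided bound. The exact constant $\lambda(g)/\inf_{M_t}\Ric$ thus arises from comparing $h$ to the \emph{tensor} $2\Ric$, not from any scalar barrier; that is precisely what your scalar approach is missing.
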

\begin{proof}
The following proof is inspired by Brendle's argument in the proof of the Perelman's conjecture \cite{Bre-Rot-3d}. Indeed, since $\Delta_fh+2\Rm(g)\ast h=\lambda h$ and $\Li_{\nabla f}(g)\in\ker L$, we have for any $\theta\in\mathbb{R}$,
$\Delta_f h_{\theta}+2\Rm(g)\ast h_{\theta}=\lambda h_{\theta}+\theta\lambda \Li_{\nabla f}(g)$, where $h_{\theta}:=h-\theta\Li_{\nabla f}(g)$ and where $\theta$ is a real number.

Fix $t>\min_{M} f$. Define $\theta_t:=\sup\{\theta/h_{\theta}\arrowvert_{M_{\leq t}}\geq 0\}$. Since $f$ is proper, $M_{\leq t}:=\{f\leq t\}$ is compact. Therefore $\theta_t$ is finite and there exists a point $x\in M_{\leq t}$ and a unitary vector $v\in T_xM$ such that $h_{\theta_t}(x)(v,v)=0.$ \\

\begin{enumerate}
\item If $x\in M_t:=\{f=t\}$ then $h_{\theta_t}(x)(v,v)=0=h(x)(v,v)-2\theta_t\Ric(x)(v,v)$. In particular, 
\begin{eqnarray*}
\arrowvert\theta_t\arrowvert&\leq&\frac{\sup_{M_t}\arrowvert h\arrowvert}{2\inf_{M_t}\Ric},\\
h&\geq& -\frac{\sup_{M_t}\arrowvert h\arrowvert}{2\inf_{M_t}\Ric}\Li_{\nabla f}(g)\geq-\frac{\lambda(g)}{\inf_{M_t}\Ric}\sup_{M_t}\arrowvert h\arrowvert g\\
\end{eqnarray*}

\item If $x\in M_{<t}$ and $\theta_t\geq 0$, then $h\geq 0$ on $M_{\leq t}$.

If $\theta_t\leq 0$, define $g(\tau):=\phi_{\tau}^*g$ and $h_{\theta_t}(\tau):=\phi_{\tau}^*h_{\theta_t}$ where $\phi_{\tau}$ is the flow generated by $-\nabla f$, then 
\begin{eqnarray*}
\partial_{\tau}h_{\theta_t}(\tau)&=&\Delta_{L,g(\tau)}h_{\theta_t}(\tau)-\lambda h_{\theta_t}(\tau)-\lambda\theta_t \phi_{\tau}^*\Li_{\nabla f}(g)\\
&\geq&\Delta_{L,g(\tau)}h_{\theta_t}(\tau)-\lambda h_{\theta_t}(\tau),
\end{eqnarray*}
where $\Delta_{L,g(\tau)}$ is the Lichnerowicz laplacian associated to the metric $g(\tau)$.

 There exists a neighborhood $U_x$ of $x$ and a time $T_x>0$ such that $$(U_x,g(\tau))_{\tau\in[-T_x,T_x]}\subset (M_{<t},g),$$ so that on $(U_x,g(\tau))_{\tau\in[-T_x,T_x]}$, $h_{\theta_t}(\tau)\geq 0$. Since the sectional curvature is non negative, one can adapt a local version of Hamilton's maximum principle for system [Theorem $12.50$, \cite{Cho-Lu-Ni-II}] to establish that $\ker h_{\theta_t}$ is a smooth distribution invariant by parallel translation in the case where $\theta_t\leq 0$. As we assume $\Ric>0$, the manifold cannot split. In particular, $h_{\theta_t}\equiv 0$ on $U_x$. As $M_{<t}$ is totally geodesic, $h_{\theta_t}\equiv 0$ on $M_{< t}$. In particular, for any $s<t$,
\begin{eqnarray*}
\arrowvert\theta_t\arrowvert\leq\frac{\sup_{M_s}\arrowvert h\arrowvert}{2\inf_{M_s}\Ric}.
\end{eqnarray*}
If $s$ tends to $t$, one has,
\begin{eqnarray*}
h\geq-\frac{\lambda(g)}{\inf_{M_t}\Ric}\sup_{M_t}\arrowvert h\arrowvert g\quad\mbox{on $M_{\leq t}$.}
\end{eqnarray*}

The same reasoning as above applied to $-h$ gives the estimate.
\end{enumerate}
\end{proof}
Without loss of generality, one can normalize the metric so that $\lambda(g)=1$ : the assumptions made in theorem \ref{spec-sgs-pos-cur} are invariant under scaling. Now, using proposition \ref{Mun-Wan-Har-Sgs} with the normalization $\lambda(g)=1$ gives $\lambda_{ess}(L)\geq 1/4$, therefore the exponential decay of $h$ given by theorem \ref{Agmon-steady} is $e^{-(1-\epsilon)f}$ for any $\epsilon$ positive small enough since $h$ is an eigenvector associated to an non positive eigenvalue $\lambda$. Now, we use the previous claim together with the assumption on the decay of the Ricci curvature to get $h=0$.
\end{proof}

\subsection{Spectrum of Expanding gradient Ricci solitons}

\subsubsection{Spectrum of $\Delta_f$}

This section is devoted to establish bounds on the bottom of the laplacian $\Delta_f$ on a non trivial expanding gradient Ricci soliton. The following is due to \cite{Mun-Wan} :

\begin{prop}\label{Mun-Wan-Har-Egs}(Munteanu-Wang)
Let $(M,g,\nabla f)$ be an expanding gradient Ricci soliton. Then, for any compactly supported functions $\phi$,
\begin{eqnarray*}
\int_{M}\left(R+\frac{n}{2}\right)\phi^2d\mu_f\leq\int_{M}\arrowvert\nabla \phi\arrowvert^2d\mu_f.
\end{eqnarray*}
\end{prop}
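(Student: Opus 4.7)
The plan is to mimic the approach in the steady case, but the cleanest route is a direct integration-by-parts argument rather than applying Lemma \ref{bd-spec-pos} to an exponential test function. The key algebraic input is the trace of the expanding soliton equation $\Ric + g/2 = \nabla^2 f$, which gives the identity $\Delta f = R + n/2$. This rewrites the quantity we wish to bound as
\begin{eqnarray*}
\int_M \left(R+\tfrac{n}{2}\right)\phi^2\, d\mu_f = \int_M (\Delta f)\, \phi^2\, d\mu_f.
\end{eqnarray*}

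Next, I would convert $\Delta f$ to the drift Laplacian via $\Delta f = \Delta_f f - |\nabla f|^2$ and use the fact that $\Delta_f$ is (formally) self-adjoint with respect to $d\mu_f$. Integrating by parts against the test function $\phi^2$ gives
\begin{eqnarray*}
\int_M (\Delta_f f)\, \phi^2\, d\mu_f = -\int_M \langle \nabla f, \nabla \phi^2\rangle\, d\mu_f = -2\int_M \phi\, \langle \nabla f, \nabla \phi\rangle\, d\mu_f,
\end{eqnarray*}
so that
\begin{eqnarray*}
\int_M \left(R+\tfrac{n}{2}\right)\phi^2\, d\mu_f = -2\int_M \phi\, \langle \nabla f,\nabla\phi\rangle\, d\mu_f - \int_M |\nabla f|^2\, \phi^2\, d\mu_f.
\end{eqnarray*}

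The final step is to absorb the cross term by the elementary inequality $2|\phi||\nabla\phi||\nabla f|\leq |\nabla\phi|^2 + \phi^2|\nabla f|^2$ (i.e.\ completing the square in $\nabla\phi + \phi \nabla f$). This makes the $|\nabla f|^2$ terms cancel exactly, leaving the desired bound
\begin{eqnarray*}
\int_M \left(R+\tfrac{n}{2}\right)\phi^2\, d\mu_f \leq \int_M |\nabla\phi|^2\, d\mu_f.
\end{eqnarray*}

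There is essentially no obstacle: everything rests on the single soliton identity $\Delta f = R + n/2$ and a weighted integration by parts, both of which are available for any smooth $\phi$ with compact support without invoking the assumption $(\Hyp)$ (in particular no properness or growth assumption on $f$ is needed). One could alternatively try to run the $e^{-\alpha f}$ trick as in Proposition \ref{Mun-Wan-Har-Sgs}, but a direct calculation of $\Delta_f(e^{-\alpha f})$ for expanding solitons produces an extra $\alpha(1-\alpha)|\nabla f|^2$ term which does not vanish (since $|\nabla f|^2$ is no longer linked to $R$ by a constant), so one recovers only a weaker inequality; the completing-the-square argument above is the way to get the sharp constants $R$ and $n/2$.
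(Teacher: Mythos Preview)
Your proof is correct and, once one unpacks Lemma~\ref{bd-spec-pos} with the test function $u=e^{-f}$ (so that $\nabla u/u=-\nabla f$), it is literally the same completing-the-square computation as the paper's. One correction to your final paragraph: the $e^{-\alpha f}$ trick \emph{does} give the sharp inequality at $\alpha=1$, since the extra coefficient $\alpha(1-\alpha)$ vanishes there; indeed $\Delta_f(e^{-f})=-(R+n/2)e^{-f}$ is exactly the paper's one-line proof.
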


\begin{proof}
The idea is to note that $\Delta_f(e^{-f})=-\left(R+\frac{n}{2}\right)e^{-f}$ and then one applies lemma \ref{bd-spec-pos} to $\phi:=e^{-f}$.
\end{proof}
\begin{rk}
Proposition \ref{Mun-Wan-Har-Egs} shows that $\lambda_1(\Delta_f)\geq n/2$ for any expanding gradient Ricci soliton with non negative scalar curvature. Moreover, one can even show that $\lambda_1(\Delta_f)=n/2$ for an expanding gradient Ricci soliton with non negative scalar curvature if and only if it is isometric to the Euclidean space. 
\end{rk}
As in the previous section, one can actually prove by hand that the Bryant expanding gradient Ricci solitons are strictly stable with the help of proposition \ref{Mun-Wan-Har-Egs} :
\begin{prop}\label{prop-stab-rot-sym}
Any rotational symmetric positively curved expanding gradient Ricci soliton satisfies $\lambda_1(\Delta_f)\geq n/2$ for $n\geq 2$.
\end{prop}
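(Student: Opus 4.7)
The proof is essentially immediate from Proposition \ref{Mun-Wan-Har-Egs}. Since the soliton is positively curved, the scalar curvature $R$ is in particular non-negative, so for any $\phi\in C_0^\infty(M)$ the Munteanu--Wang inequality yields
\begin{eqnarray*}
\frac{n}{2}\int_M \phi^2\,d\mu_f\;\leq\;\int_M\!\left(R+\tfrac{n}{2}\right)\phi^2\,d\mu_f\;\leq\;\int_M |\nabla\phi|^2\,d\mu_f.
\end{eqnarray*}
By density of $C_0^\infty(M)$ in the domain of the Friedrichs extension of $-\Delta_f$ on $L^2_f$ (item $(1)$ of the structural proposition in the Preliminaries) together with the variational characterisation of the bottom of the spectrum, this gives $\lambda_1(\Delta_f)\geq n/2$.

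Thus no further input is required beyond the non-negativity of $R$, so the rotational-symmetry hypothesis plays no essential role at this step. It is presumably carried along because the ultimate aim of this part of the paper is to deduce strict stability of the Lichnerowicz operator $L$ on these concrete examples: that argument, parallel to the steady Bryant proposition stated just after, requires the explicit warped-product form $g=dr^2+\varphi(r)^2 g_{\mathbb{S}^{n-1}}$ in order to bound the curvature term $-2\Rm(g)(h,h)$ pointwise in terms of the radial and spherical sectional curvatures.

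Consequently I do not expect any real obstacle for the stated inequality itself. The substantive difficulty will arise when passing from this scalar bound to strict stability of $L$: one will need to exploit the rotationally symmetric curvature formulas to obtain a pointwise lower bound on $R|h|^2-2\Rm(g)(h,h)$ in terms of a positive function of the warping factor, in the spirit of the bound $c_1(n)\,a\,|h|^2$ invoked in the steady Bryant computation quoted just below. Combining such a pointwise estimate with the inequality $\lambda_1(\Delta_f)\geq n/2$ established here then produces a strictly positive lower bound on the Rayleigh quotient of $L$ on $L^2_f$-symmetric $2$-tensors.
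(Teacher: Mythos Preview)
Your proof of the statement as literally written is correct and immediate: positive curvature gives $R\geq 0$, and Proposition~\ref{Mun-Wan-Har-Egs} then yields $\lambda_1(\Delta_f)\geq n/2$ with no use of rotational symmetry.

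The paper's own proof, however, establishes exactly the tensor statement you anticipate in your final paragraph, and the proposition as phrased appears to be a slip for $\lambda_1(L)\geq n/2$ (strict stability of the Lichnerowicz operator with constant $n/2$). Writing $a$ for the radial and $b$ for the spherical sectional curvatures of the warped product, the paper computes $R=2(n-1)a+(n-1)(n-2)b$ and estimates $\Rm(h,h)$ in the diagonalising frame to obtain the pointwise inequality
\[
R\arrowvert h\arrowvert^2-2\Rm(h,h)\;\geq\;2\sqrt{n-1}(\sqrt{n-1}-1)\,a\,\arrowvert h\arrowvert^2+(n-2)(n-3)\,b\sum_{i\neq\partial r}h_{ii}^2\;\geq\;0.
\]
Combining this with Proposition~\ref{Mun-Wan-Har-Egs} (via Kato applied to $\arrowvert h\arrowvert$) gives
\[
\int_M\arrowvert\nabla h\arrowvert^2-2\Rm(h,h)\,d\mu_f\;\geq\;\int_M\Bigl(R+\tfrac{n}{2}\Bigr)\arrowvert h\arrowvert^2-2\Rm(h,h)\,d\mu_f\;\geq\;\tfrac{n}{2}\int_M\arrowvert h\arrowvert^2\,d\mu_f.
\]
So you correctly identified both the triviality of the stated scalar bound and the shape of the genuine argument; you simply stopped short of carrying out the curvature computation that the paper actually performs.
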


\begin{proof}
Let $a$ denote the radial sectional curvatures and let $b$ denote the spherical curvatures. Then,
\begin{eqnarray*}
R=2(n-1)a+(n-1)(n-2)b,
\end{eqnarray*}
and, if $h$ is a symmetric $2$-tensor,
\begin{eqnarray*}
\Rm(h,h):=\Rm_{ijji}h_{ii}h_{jj}&=&2ah_{\partial r\partial r}\sum_{i\neq \partial r}h_{ii}+b\sum_{i\neq j;i\neq\partial r;j\neq\partial r}h_{ii}h_{jj}\\
&=&2ah_{\partial r\partial r}\sum_{i\neq \partial r}h_{ii}+b\left(\left(\sum_{i\neq\partial r}h_{ii}\right)^2-\sum_{i\neq\partial r}h_{ii}^2\right)\\
&\leq&2a\arrowvert h_{\partial r\partial r}\arrowvert\sqrt{n-1}\left(\sum_{i\neq\partial r}h_{ii}^2\right)^{1/2}+(n-2)b\sum_{i\neq\partial r}h_{ii}^2\\
&\leq&\sqrt{n-1}a\arrowvert h\arrowvert^2+(n-2)b\sum_{i\neq\partial r}h_{ii}^2.
\end{eqnarray*}
Finally,
\begin{eqnarray*}
R\arrowvert h\arrowvert^2-2\Rm(h,h)&\geq&(2(n-1)a+(n-1)(n-2)b)\arrowvert h\arrowvert^2\\
&&-2(\sqrt{n-1}a\arrowvert h\arrowvert^2+(n-2)b\sum_{i\neq\partial r}h_{ii}^2)\\
&\geq&2\sqrt{n-1}(\sqrt{n-1}-1)a\arrowvert h\arrowvert^2+(n-2)(n-3)b\sum_{i\neq\partial r}h_{ii}^2\\
&\geq& 0.
\end{eqnarray*}
Therefore, by the Hardy inequality \ref{Mun-Wan-Har-Egs},
\begin{eqnarray*}
\int_{M}\arrowvert \nabla h\arrowvert^2-2\Rm(h,h)d\mu_f&\geq&\int_{M}\left(R+\frac{n}{2}\right)\arrowvert h\arrowvert^2-2\Rm(h,h)d\mu_f\\
&\geq&\frac{n}{2}\int_{M}\arrowvert h\arrowvert^2d\mu_f.
\end{eqnarray*}

\end{proof}


\subsubsection{Spectrum of the Lichnerowicz operator of expanding gradient Ricci solitons}

Now, we study both the essential and discrete spectrum of expanding gradient Ricci solitons. The crucial obervation is that $L$ is unitarily conjugate to an harmonic oscillator, i.e. an operator of the type $-\Delta_g+V(g)\ast$ where $V(g)$ is a potential quadratic in the distance to a fixed point of the manifold. We begin with the essential spectrum :

\begin{prop}
Let $(M,g,\nabla f)$ be an expanding gradient Ricci soliton satisfying $(\Hyp)$.
Then $\sigma_{ess}(L)=\emptyset$.
\end{prop}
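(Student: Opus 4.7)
The plan is to conjugate $L$ on the weighted space $L^2_f$ with an explicit unitary so that it becomes a Schrödinger operator on the unweighted $L^2$ with a confining potential, then invoke Persson's principle (or compactness of the resolvent) to conclude that the essential spectrum is empty.

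First I would introduce the unitary map
\begin{eqnarray*}
U : L^2_f(M,S^2T^*M)\to L^2(M,S^2T^*M),\qquad U(h):=e^{f/2}h,
\end{eqnarray*}
and compute, componentwise, the conjugate operator $\hat L:=U\,L\,U^{-1}$. A direct calculation using $\nabla(e^{-f/2}u)=e^{-f/2}(\nabla u-\tfrac12\nabla f\otimes u)$ yields the standard identity
\begin{eqnarray*}
\Delta_f(e^{-f/2}u)=e^{-f/2}\left[\Delta u-\Big(\tfrac14\arrowvert\nabla f\arrowvert^2+\tfrac12\Delta f\Big)u\right],
\end{eqnarray*}
so that $\hat L u=-\Delta u+V u-2\Rm(g)\ast u$ with $V:=\tfrac14\arrowvert\nabla f\arrowvert^2+\tfrac12\Delta f$. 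Using the expanding soliton identities $\Delta f=R+n/2$ and $\arrowvert\nabla f\arrowvert^2+R-f\equiv c_0$ (recalled in the paper), this potential simplifies to
\begin{eqnarray*}
V=\tfrac{f}{4}+\tfrac{R}{4}+\tfrac{n}{4}+\tfrac{c_0}{4}.
\end{eqnarray*}

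Next, under $(\Hyp)$, $R$ is bounded (since $\sup_M\arrowvert\Rm(g)\arrowvert<+\infty$) and $f(x)\gtrsim r_p(x)^2$ at infinity, hence $V(x)\to+\infty$ as $r_p(x)\to\infty$. In particular, for every $R>0$ one has $\kappa(R):=\inf_{M\setminus B_{g}(p,R)}V\to+\infty$ as $R\to+\infty$. Since $\Rm(g)$ is bounded, for any $u\in C_0^\infty(M\setminus B_g(p,R),S^2T^*M)$ with $\|u\|_{L^2}=1$,
\begin{eqnarray*}
\langle \hat L u,u\rangle_{L^2}=\int_M\arrowvert\nabla u\arrowvert^2+V\arrowvert u\arrowvert^2\,d\mu(g)-2\int_M\langle \Rm(g)\ast u,u\rangle\,d\mu(g)\geq \kappa(R)-2\|\Rm(g)\|_\infty.
\end{eqnarray*}

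Finally I would apply Persson's characterization of the bottom of the essential spectrum,
\begin{eqnarray*}
\inf\sigma_{ess}(\hat L)=\lim_{R\to+\infty}\inf\{\,\langle \hat L u,u\rangle_{L^2}\;:\;u\in C_0^\infty(M\setminus B_g(p,R),S^2T^*M),\;\|u\|_{L^2}=1\},
\end{eqnarray*}
to conclude $\inf\sigma_{ess}(\hat L)=+\infty$, i.e.\ $\sigma_{ess}(\hat L)=\emptyset$. Since unitary conjugation preserves the essential spectrum and $L=U^{-1}\hat L U$, this yields $\sigma_{ess}(L)=\emptyset$. The only genuine subtlety in this plan is making sure the self-adjoint realization of $L$ used in Persson's formula is the one whose spectrum is being discussed, but this is exactly the Friedrichs extension on $C_0^\infty$ whose existence and description were recorded in the previous subsection (where it was shown that $\,^{f}\overset{\circ}{W^2}=\,^{f}H_2^2$); the confining potential $V$ moreover ensures that $\hat L$ is essentially self-adjoint on $C_0^\infty$ by standard arguments, so no ambiguity arises.
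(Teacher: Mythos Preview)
Your argument is correct and follows the same route as the paper: conjugate $L$ by $e^{f/2}$ to a Schr\"odinger operator $-\Delta_g+V-2\Rm(g)\ast$ on the unweighted $L^2$, observe that under $(\Hyp)$ the scalar part $V=\tfrac12\Delta f+\tfrac14\arrowvert\nabla f\arrowvert^2$ grows like $r_p^2$, and deduce $\sigma_{ess}=\emptyset$. The only cosmetic difference is that the paper appeals to its earlier Fredholm criterion (the claim used for Proposition~\ref{ess-spec-sgs}) together with Reed--Simon~XIII.67, whereas you phrase the same conclusion via Persson's formula; these are equivalent here.
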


\begin{proof}
Let $h\in C^{\infty}(M,S^2T^*M)$. Then, 
\begin{eqnarray*}
e^{f/2}L(e^{-f/2}h)&=&-\Delta_gh+\frac{1}{2}\left(\Delta_g f+\frac{\arrowvert\nabla f\arrowvert^2}{2}\right)h-2\Rm(g)\ast h\\
&=:&-\Delta_gh+V(g)\ast h.
\end{eqnarray*}
Now, as $V(g)$ is equivalent at infinity to $r_p^2(\cdot)$ for $p\in M$, we claim, by using the proof of proposition \ref{ess-spec-sgs} (see also [Theorem $XIII.67$ ; \cite{Ree-Sim-IV}]), that $\sigma_{ess}(-\Delta_g+V(g)\ast)=\emptyset$ and so is $\sigma_{ess}(L)$. 

\end{proof}

As in section \ref{Spec-Lic-op-sgs}, to investigate the discrete spectrum of the Lichnerowicz operator, it is necessary to control the decay of an eigenvector of this operator at infinity. The following theorem is an adaptation of Agmon's \cite{Agm-Dec} and Barry Simon's \cite{Sim-Dec}  arguments in the setting of Riemannian manifolds. Again, the presentation is inspired by \cite{Li-Wan-Poi}.

\begin{theo}\label{Agmon-expanding}
Let $(M,g)$ be a complete Riemannian manifold. Let $V$ be a potential acting on a geometric bundle $E$ over $M$ such that there exists a smooth proper function $W$ and $\mu\in(1,+\infty)$ such that
$\mu^2\arrowvert\nabla W\arrowvert^2\leq-\Delta+V$ outside a compact set, in the sense of quadratic forms. Assume that $\arrowvert \nabla W\arrowvert$ is a proper function and 
\begin{eqnarray*}
\limsup_{+\infty}\arrowvert\nabla W\arrowvert e^{-W/2}<+\infty.
\end{eqnarray*}

Let  $h$ be an $L^2$-eigenfunction of $-\Delta+V$ with eigenvalue $\lambda\in\sigma_{disc}(-\Delta+V)$. Then,
\begin{eqnarray*}
e^{\delta W}h\in L^2,
\end{eqnarray*}
for any $\delta\in[0,\mu).$
\end{theo}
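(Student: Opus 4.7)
The plan follows the classical Agmon--Simon scheme already illustrated in the proof of Theorem \ref{Agmon-steady}. The idea is to conjugate the eigenvalue equation $(-\Delta+V)h=\lambda h$ by a growing but bounded weight $e^F$ approximating $e^{\delta W}$, apply the quadratic form hypothesis $\mu^2|\nabla W|^2\leq-\Delta+V$ to a suitable compactly supported cutoff of $e^F h$, and compare the two expressions to extract a uniform $L^2$-bound on $e^{\delta W}h$. Properness of $|\nabla W|$ converts the pointwise spectral gap $\mu^2-\delta^2>0$ into an effective coercivity outside a compact set, while the growth hypothesis $\limsup_{+\infty}|\nabla W|e^{-W/2}<+\infty$ is reserved for closing a bootstrap in $\delta$.

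The core computation is the following algebraic identity. For a compactly supported Lipschitz function $\phi$ and a smooth bounded weight $F$, a direct integration by parts using the eigenvalue equation yields
\begin{eqnarray*}
\int |\nabla(\phi e^F h)|^2 + \langle V\phi e^F h,\phi e^F h\rangle\, d\mu &=& \int (\lambda+|\nabla F|^2)\phi^2 e^{2F}|h|^2\,d\mu\\
&&+\ 2\int \phi\langle\nabla\phi,\nabla F\rangle e^{2F}|h|^2\,d\mu + \int |\nabla\phi|^2 e^{2F}|h|^2\,d\mu.
\end{eqnarray*}
Provided $\phi e^F h$ is supported outside the exceptional compact set, the quadratic form hypothesis bounds the left-hand side below by $\mu^2\int|\nabla W|^2\phi^2 e^{2F}|h|^2\,d\mu$. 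Subtracting and choosing $F$ so that $|\nabla F|\leq\delta|\nabla W|$ produces the working inequality
\begin{eqnarray*}
(\mu^2-\delta^2)\int |\nabla W|^2\phi^2 e^{2F}|h|^2\,d\mu \leq \lambda\int \phi^2 e^{2F}|h|^2\,d\mu + 2\int\phi\langle\nabla\phi,\nabla F\rangle e^{2F}|h|^2\,d\mu + \int|\nabla\phi|^2 e^{2F}|h|^2\,d\mu.
\end{eqnarray*}

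From this point the argument is combinatorial. I would choose $\phi=\phi_R:=\eta(W/R)$ with $\eta$ a standard bump, so that $|\nabla\phi_R|\leq (C/R)|\nabla W|$, and $F=F_{k,\delta}$ a smooth bounded truncation of $\delta W$ at level $k$ satisfying $|\nabla F_{k,\delta}|\leq\delta|\nabla W|$. Since $|\nabla W|$ is proper, outside a fixed sublevel set of $|\nabla W|$ the factor $(\mu^2-\delta^2)|\nabla W|^2$ exceeds $|\lambda|+1$, which absorbs the term involving $\lambda$. The cross term and the term in $|\nabla\phi_R|^2$ are similarly absorbed into the left-hand side for $R$ large, thanks to the $1/R$ gain. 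Letting $R\to+\infty$ produces an $R$-independent bound on $\int e^{2F_{k,\delta}}|h|^2\,d\mu$, and then $k\to+\infty$ gives $e^{\delta W}h\in L^2$.

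The main obstacle is the finiteness of the error integrals at each stage: the right-hand side involves $\int|\nabla W|^2\phi_R^2 e^{2F_{k,\delta}}|h|^2\,d\mu$, precisely the quantity one is trying to control on the left. I would close this circularity by a bootstrap in $\delta$, starting from $\delta=0$ (where the bound is simply $h\in L^2$) and iteratively raising $\delta$ by a small amount. The decay hypothesis $\limsup_{+\infty}|\nabla W|e^{-W/2}<+\infty$ is the essential input here: it lets one dominate $|\nabla W|^2 e^{2\delta' W}$ by a constant multiple of $e^{(2\delta'+1)W}$, so that the error integrals at step $n+1$ are bounded by quantities already shown to be finite at step $n$, provided the increment $\delta-\delta'$ is chosen small enough.
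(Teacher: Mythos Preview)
Your core identity and the general Agmon scheme are correct and coincide with the paper's. The problem is in how you close the outer cutoff error, and the bootstrap you propose does not actually run.

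With $\phi_R=\eta(W/R)$ the cutoff error is of the form
\[
\frac{C}{R^2}\int_{\{R\leq W\leq 2R\}}|\nabla W|^2\,e^{2F_{k,\delta}}\,|h|^2\,d\mu ,
\]
and with your flat truncation $F_{k,\delta}\leq k$ the factor $e^{2F_{k,\delta}}$ is merely bounded, so you must control $\int_{\{W\sim R\}}|\nabla W|^2|h|^2$. The growth hypothesis only gives $|\nabla W|^2\leq Ce^{W}$, hence passing from the bound at exponent $\delta_n$ to the error at exponent $\delta_{n+1}$ costs an extra $e^{W}$: the inequality you need is $2\delta_{n+1}+1\leq 2\delta_n$, i.e.\ $\delta_{n+1}\leq \delta_n-\tfrac12$. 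The bootstrap goes the wrong way, and already the very first step ($\delta_0=0$) fails because $|\nabla W|h\in L^2$ is not known.

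The paper avoids any iteration by a different choice of $F$: instead of flattening $\delta W$ above a threshold, it sets $F=K-W$ there, so that $F$ \emph{decreases} with slope $-1$. Then on the support of $\nabla\phi$ one has $e^{2F}|\nabla W|^2=e^{2K}\bigl(|\nabla W|e^{-W/2}\bigr)^2 e^{-W}\leq Ce^{2K}e^{-W}$, and the outer error is controlled by $\|h\|_{L^2}^2$ alone for each fixed $K$; one then sends $R\to\infty$ and finally $K\to\infty$. This is exactly where the hypothesis $\limsup|\nabla W|e^{-W/2}<\infty$ enters, and it is also why the theorem assumes $\mu>1$ rather than merely $\mu>\delta$: on the far region $|\nabla F|=|\nabla W|$, so the effective gap there is $(\mu^2-1)|\nabla W|^2$, which is still coercive by properness of $|\nabla W|$. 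Replace your flat truncation by this decreasing tail and the argument closes in one pass, with no bootstrap.
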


\begin{proof}
Since $W$ and $\arrowvert \nabla W\arrowvert$ are proper, one can assume that $M$ has one end, without loss of generality.

Let $F$ be any smooth function on $M$ to be defined later and let $\phi$ be any Lipschitz function on $M$ with compact support. Then,

\begin{eqnarray*}
&&\int_{M}\arrowvert\nabla(e^F\phi h)\arrowvert^2+<V\ast e^F\phi h,\phi h>d\mu_g=\\
&&\int_M\arrowvert\nabla(\phi h)\arrowvert^2e^{2F}+\arrowvert\phi h\arrowvert^2\arrowvert\nabla F\arrowvert^2e^{2F}d\mu_g+\int_M\frac{1}{2}<\nabla\arrowvert\phi h\arrowvert^2,\nabla e^{2F}>+<V\ast(e^F\phi h),\phi h>d\mu_g\\
&&=-\int_M\left<\phi h,\Delta(\phi h)+2<\nabla(\phi h),\nabla F>\right>e^{2F}-\frac{1}{2}\arrowvert\phi h\arrowvert^2\Delta(e^{2F})+\arrowvert\phi h\arrowvert^2\arrowvert\nabla F\arrowvert^2e^{2F}d\mu_g\\
&&+\int_M<V\ast e^F\phi h,\phi h>d\mu_g\\
&&=\int_M-<\phi h,\Delta(\phi h)e^{2F}>+\arrowvert\phi h\arrowvert^2\arrowvert\nabla F\arrowvert^2e^{2F}d\mu_g+\int_M<V\ast e^F\phi h,\phi h>d\mu_g\\
&&=\int_M(\lambda+\arrowvert\nabla F\arrowvert^2)\arrowvert\phi h\arrowvert^2e^{2F}d\mu_g+\int_M(<\nabla F,\nabla\phi^2>+\arrowvert\nabla \phi\arrowvert^2)\arrowvert h\arrowvert^2e^{2F}d\mu_g.
\end{eqnarray*}
Now, by assumption on $-\Delta +V$, one has the following inequality :
\begin{eqnarray*}
\int_M\mu^2\arrowvert\nabla W\arrowvert^2\arrowvert\phi h\arrowvert^2e^{2F}d\mu_g&\leq&\int_M(\lambda+\arrowvert\nabla F\arrowvert^2)\arrowvert\phi h\arrowvert^2e^{2F}d\mu_g\\
&&+\int_M(<\nabla F,\nabla\phi^2>+\arrowvert\nabla \phi\arrowvert^2)\arrowvert h\arrowvert^2e^{2F}d\mu_g.
\end{eqnarray*}

Take any $\delta\in[0,\mu)$. Fix now a large radius $R_0$ such that  
\begin{eqnarray*}
\liminf_{r_p(x)\geq R_0+1}(\mu^2-\max\{\delta^2,1\})\arrowvert\nabla W\arrowvert^2-\lambda>0.
\end{eqnarray*}
 Fix also a number $K$ such that $K\geq (R_0+1)(1+\delta)$ and a radius $R$ larger than $K/(1+\delta)$.
Define now on $\{W\geq R_0\}$ :
\[
F =
\left\{
\begin{array}{rl}
\delta W & \mbox{if } W \leq \frac{K}{1+\delta} \\ \\

K-W & \mbox{if } W\geq\frac{K}{1+\delta} 
\end{array}
\right.\quad
\phi =
\left\{
\begin{array}{rl}
 W-R_0 & \mbox{if } R_0\leq W\leq R_0+1 \\ \\
1 & \mbox{if } R_0+1\leq W\leq R\\\\
\frac{2R-W}{R} & \mbox{if} \quad R\leq W\leq 2R\\\\
0 & \mbox{otherwise}.
\end{array}
\right.
\]
Plugging this in the previous inequality lets us to estimate each term :
\begin{eqnarray*}
\int_{W\geq R_0}\arrowvert\nabla\phi\arrowvert^2h^2e^{2F}d\mu_g&\leq&\int_{R_0\leq W\leq R_0+1}\arrowvert\nabla W\arrowvert^2 h^2e^{\delta W}d\mu_g\\
&&\frac{1}{R^2}\int_{R\leq W\leq 2R}\arrowvert\nabla W\arrowvert^2\arrowvert h\arrowvert^2e^{K-W}d\mu_g\\
\int_{W\geq R_0}\arrowvert\nabla F\arrowvert^2\arrowvert\phi h\arrowvert^2e^{2F}d\mu_g&\leq&\int_{R_0\leq W\leq K/{1+\delta}}\delta^2\arrowvert\nabla W\arrowvert^2\arrowvert\phi h\arrowvert^2e^{2F}d\mu_g\\
&&+\int_{K/{1+\delta}\leq W\leq2R}\arrowvert\nabla W\arrowvert^2\arrowvert\phi h\arrowvert^2e^{2F}d\mu_g\\
2\int_{W\geq R_0}<\nabla F,\nabla\phi>\phi \arrowvert h\arrowvert^2e^{2F}d\mu_g&\leq&2\int_{R_0\leq W\leq R_0+1}\delta \arrowvert\nabla W\arrowvert^2\arrowvert h\arrowvert^2e^{2F}d\mu_g\\
&&+\frac{2}{R}\int_{R\leq W\leq 2R}\arrowvert \nabla W\arrowvert^2\arrowvert h\arrowvert^2e^{K-W}d\mu_g.
\end{eqnarray*}

Hence,
\begin{eqnarray*}
\int_{R_0\leq W\leq K/{1+\delta}}((\mu^2-\delta^2)\arrowvert\nabla W\arrowvert^2-\lambda)\arrowvert\phi h\arrowvert^2e^{2\delta W}d\mu_g\\
+\int_{K/{1+\delta}\leq W\leq 2R}((\mu^2-1)\arrowvert\nabla W\arrowvert^2-\lambda)\arrowvert\phi h\arrowvert^2e^{2F}d\mu_g&\leq& \\
C(R_0)+\frac{1}{R^2}\int_{R\leq W\leq 2R}\arrowvert\nabla W\arrowvert^2\arrowvert h\arrowvert^2e^{K-W}d\mu_g\\
+\frac{2}{R}\int_{R\leq W\leq 2R}\arrowvert \nabla W\arrowvert^2\arrowvert h\arrowvert^2e^{K-W}d\mu_g.
\end{eqnarray*}
By assumption on the growth of $\arrowvert\nabla W\arrowvert$, by letting $R$ tend to $+\infty$, we have, for any $K$ sufficiently large,
\begin{eqnarray*}
\int_{R_0\leq W\leq K/{1+\delta}}((\mu^2-\delta^2)\arrowvert\nabla W\arrowvert^2-\lambda)\arrowvert\phi h\arrowvert^2e^{2\delta W}d\mu_g\leq C(R_0).
\end{eqnarray*}
Hence the result.
\end{proof}

\begin{coro}\label{int-est-eigen-egs}
Let $(M,g,\nabla f)$ be an expanding gradient Ricci soliton satisfying $(\Hyp)$. Let $h$ be an eigenfunction of $L$ associated to an eigenvalue $\lambda\in \sigma_{disc}(L)$. Then 
\begin{eqnarray*}
e^{\alpha f/2}h\in L^2_f
\end{eqnarray*}
for any $\alpha\in[0,1)$.  
\end{coro}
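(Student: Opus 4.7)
The idea is to deduce the corollary from the abstract Agmon estimate of Theorem~\ref{Agmon-expanding} via the unitary conjugation already introduced to prove $\sigma_{ess}(L)=\emptyset$. Concretely, setting $k:=e^{f/2}h$ defines a unitary isomorphism $L^2_f(S^2T^*M)\to L^2_g(S^2T^*M)$ under which the Lichnerowicz operator becomes the harmonic oscillator
\[
\tilde{L}k=e^{f/2}L(e^{-f/2}k)=-\Delta_g k+V\ast k,\qquad V:=\tfrac12\Delta f+\tfrac14|\nabla f|^2-2\Rm(g),
\]
with $\lambda\in\sigma_{disc}(\tilde{L})$ and $k$ a discrete eigenfunction. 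Since
\[
\int e^{\alpha f}|h|^2\,d\mu_f=\int e^{(\alpha+1)f}|h|^2\,d\mu_g=\int e^{\alpha f}|k|^2\,d\mu_g,
\]
the conclusion $e^{\alpha f/2}h\in L^2_f$ is equivalent to $e^{\alpha f/2}k\in L^2_g$, which is precisely what Theorem~\ref{Agmon-expanding} is designed to produce.

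I would then use the expander soliton identities $\Delta f=R+n/2$ and $|\nabla f|^2=f-R$ (after shifting $f$ by an additive constant to normalize the Hamilton invariant) to rewrite
\[
V=\frac{f}{4}+\frac{R}{4}+\frac{n}{4}-2\Rm(g).
\]
Under $(\Hyp)$, $\Rm(g)$ and hence $R$ are bounded, so $V$ is a potential comparable to $(f/4)\cdot\mathrm{Id}$ at infinity up to bounded endomorphism corrections; moreover $f$ and $|\nabla f|^2=f-R$ are proper.

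To invoke the Agmon theorem I would take as weight $W:=\beta f$ with $\beta\in(0,1/2)$ and $\mu\in(1,1/(2\beta))$. The hypotheses on $W$ are readily verified: $W$ and $|\nabla W|=\beta|\nabla f|$ are proper, while $|\nabla W|\,e^{-W/2}=\beta|\nabla f|\,e^{-\beta f/2}\to 0$. The quadratic form inequality reduces, after dropping the non-negative $-\Delta$ contribution, to the pointwise estimate
\[
\mu^2\beta^2|\nabla f|^2\le V\quad\text{outside a large compact set,}
\]
which in turn follows from
\[
V-\mu^2\beta^2|\nabla f|^2=\Big(\tfrac14-\mu^2\beta^2\Big)f+O(1)\longrightarrow+\infty
\]
as soon as $\mu\beta<1/2$. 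Theorem~\ref{Agmon-expanding} then yields $e^{\delta W}k\in L^2_g$ for every $\delta\in[0,\mu)$. Given $\alpha\in[0,1)$, choosing $\beta$ sufficiently close to $1/2$ and $\mu$ slightly above $1$ makes $2\mu\beta$ as close to $1$ as desired, so $\delta:=\alpha/(2\beta)$ is admissible, giving $e^{\alpha f/2}k\in L^2_g$ and hence $e^{\alpha f/2}h\in L^2_f$.

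The only delicate point is the sharp balance between the leading $f/4$ in $V$ and the quadratic gradient of $W$: the constraint $\mu\beta<1/2$ is exactly what restricts $\alpha$ to the open interval $[0,1)$. This matches the expected sharpness, since the Gaussian weight $e^{-f/2}$ sits at the critical Agmon threshold for the conjugated harmonic oscillator, and one cannot expect any decay faster than this without stronger hypotheses on the geometry at infinity.
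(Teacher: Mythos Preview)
Your proposal is correct and follows exactly the paper's strategy: conjugate $L$ unitarily to the harmonic oscillator $-\Delta_g+V$ via $k=e^{f/2}h$, then apply Theorem~\ref{Agmon-expanding} with a weight proportional to $f$. The paper's one-line verification takes $\mu W:=f/2$ (i.e.\ your $\beta=1/(2\mu)$), whereas you pick $W=\beta f$ with $\mu\beta<1/2$ and let $\mu\beta\nearrow 1/2$; your parametrisation is slightly more explicit in checking that $V-\mu^2|\nabla W|^2=(\tfrac14-\mu^2\beta^2)f+O(1)\to+\infty$, but the content is identical.
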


\begin{proof}
If $h$ is such an eigenfunction then it satisfies : 
\begin{eqnarray*}
-\Delta_g(e^{f/2}h)+\frac{1}{2}\left(\Delta_g f+\frac{\arrowvert\nabla f\arrowvert^2}{2}\right)(e^{f/2}h)-2\Rm(g)\ast h=\lambda (e^{f/2}h).
\end{eqnarray*}
Now, if 
\begin{eqnarray*}
V(g):=\frac{1}{2}\left(\Delta f+\frac{\arrowvert\nabla f\arrowvert^2}{2}\right)-2\Rm(g)\ast,
\end{eqnarray*}
then, by the soliton identities, $-\Delta_g+V(g)\geq \mu^2\arrowvert \nabla W\arrowvert^2$ outside a compact set, where $\mu W:=f/2$. By lemma \ref{sol-id-egs}, both $f$ and $\nabla f$ are proper. Therefore, by the proposition \ref{Agmon-expanding}, we get that $e^{\delta W}(e^{f/2}h)=e^{(\delta/\mu) f/2}(e^{f/2}h)\in L^2,$ for any $\delta/\mu\in[0,1)$. Hence the result. 

\end{proof}

\begin{coro}\label{amorce-decay}
Let $(M,g,\nabla f)$ be an expanding gradient Ricci soliton satisfying $(\Hyp)$. Let $h$ be an eigenfunction of $L$ associated to an eigenvalue $\lambda\in \sigma_{disc}(L)$. Then, 
\begin{eqnarray*}
\exp\left( \alpha f\right)h\in L^{\infty}(M,\mathbb{R}),
\end{eqnarray*}
for any $\alpha\in[0,1)$.  
\end{coro}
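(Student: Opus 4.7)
The aim is to upgrade the weighted $L^2$ decay from Corollary~\ref{int-est-eigen-egs} to a pointwise decay. The estimate $e^{\alpha' f/2} h \in L^2_{f}$ for $\alpha'\in[0,1)$ rewrites, since $d\mu_{f} = e^{f}d\mu_g$, as $e^{(1+\alpha')f/2}h \in L^2(d\mu_g)$, so that $h$ decays in the unweighted $L^2$ sense at any rate $\beta\in[1/2,1)$. The strategy is a local Moser iteration on balls of unit radius, whose input is this $L^2$ decay and whose output controls the pointwise norm of $h$ at the centre of the ball.

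Since $Lh = \lambda h$ and $\|\Rm(g)\|_{\infty}<+\infty$ by $(\Hyp)$, Kato's inequality yields $-\Delta_{f}|h|\leq C(\lambda,k_0)|h|$ weakly on $M$. A local weighted Sobolev inequality on $B(x,1)$ is supplied by \cite{Sal-Cos-Uni-Ell} since $\Ric(g)$ is uniformly bounded below by $(\Hyp)$; passage from the unweighted to the weighted norm on $B(x,1)$ costs a factor controlled by the oscillation of $f$ on the ball. The standard weighted Moser iteration then produces a reverse-H\"older estimate of the form
\begin{eqnarray*}
\sup_{B(x,1/2)} |h|^2 \leq \frac{C_{f(x)}}{\vol_{f}B(x,1)} \int_{B(x,1)} |h|^2\, d\mu_{f}.
\end{eqnarray*}

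The main point is to track how each factor above depends on $f(x)$. By the soliton identity $|\nabla f|^2 + R = f + c$ of lemma \ref{sol-id-egs}, one has on $B(x,1)$ the bound $|\nabla f| \leq C\sqrt{f(x)+1}$, so the oscillation of $f$ on $B(x,1)$ is at most $C\sqrt{f(x)}$. This provides three estimates with a single subexponential error factor $e^{C\sqrt{f(x)}}$: firstly, the Moser constant satisfies $C_{f(x)} \leq C e^{C\sqrt{f(x)}}$; secondly, Bishop--Gromov relative volume comparison together with the $(\Hyp)$ bound $r_p(x) \leq C\sqrt{f(x)}$ gives $\vol_{f} B(x,1) \geq c\, e^{f(x) - C\sqrt{f(x)}}$; thirdly, using the pointwise lower bound on $f$ along $B(x,1)$,
\begin{eqnarray*}
\int_{B(x,1)} |h|^2\, d\mu_{f} \leq e^{-\alpha' f(x) + C\sqrt{f(x)}}\, \| e^{\alpha' f/2} h \|_{L^2_{f}}^2.
\end{eqnarray*}

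Combining these three estimates produces $|h|(x)^2 \leq C\, e^{-(1+\alpha')f(x) + C\sqrt{f(x)}}$; multiplying by $e^{2\alpha f(x)}$ and choosing $\alpha' \in (\max\{0,2\alpha-1\},1)$, which is non-empty because $\alpha<1$, one sees that the leading exponent $(2\alpha - 1 - \alpha')f(x)$ is strictly negative and dominates the $\sqrt{f(x)}$ correction. This yields the desired pointwise bound $e^{\alpha f}h \in L^{\infty}(M,\mathbb{R})$. The main obstacle, compared to the steady-case Theorem~\ref{Agmon-steady}, is the unboundedness of $|\nabla f|$ in the expanding case, which forces the Sobolev constants and the volume comparison on a fixed-radius ball to degrade by a factor $e^{C\sqrt{f(x)}}$; the resolution is that $\sqrt{f}$ is negligible compared to any positive linear multiple of $f$ at infinity, so the exponent absorbed from the $L^2_f$ decay survives the iteration intact.
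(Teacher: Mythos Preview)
Your proof is correct and follows essentially the same scheme as the paper: Moser iteration on unit balls fed by the $L^2_f$ decay of Corollary~\ref{int-est-eigen-egs}, Bishop--Gromov volume comparison, and the oscillation bound $\mathrm{osc}_{B(x,1)} f \leq C\sqrt{f(x)}$ to absorb the subexponential error terms.

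The one technical difference is worth recording. The paper first conjugates by $e^{f/2}$, setting $v:=e^{f/2}h$; since $e^{f/2}L(e^{-f/2}\cdot) = -\Delta_g + V(g)\ast$ with $V$ growing like $f$, the function $|v|$ satisfies an \emph{unweighted} differential inequality $\Delta_g|v|\geq -C|v|$, and the Moser iteration runs with the uniform local Sobolev constant coming from \cite{Sal-Cos-Uni-Ell}. You instead keep $h$ and run the Moser iteration for $\Delta_f$ with the weighted measure, which forces you to track the weighted Sobolev constant on $B(x,1)$; you correctly observe that it degrades only by a factor $e^{C\sqrt{f(x)}}$ (and that this factor enters the final mean-value constant to a fixed finite power, hence remains of the form $e^{C'\sqrt{f(x)}}$). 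The two routes are unitarily equivalent---the conjugation is precisely the isometry $L^2_f\to L^2$ sending $h\mapsto e^{f/2}h$---so the bookkeeping differs but the estimates match. The paper's substitution buys a uniform Sobolev constant at the cost of the conjugation; your route avoids the conjugation at the cost of tracking one more $e^{C\sqrt{f(x)}}$ factor.
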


\begin{proof}
If $u$ is such an eigenfunction then it satisfies : 
\begin{eqnarray*}
-\Delta(e^{f/2}h)+\frac{1}{2}\left(\Delta f+\frac{\arrowvert\nabla f\arrowvert^2}{2}\right)(e^{f/2}h)=\lambda (e^{f/2}h).
\end{eqnarray*}
In particular, by the Kato inequality,
\begin{eqnarray*}
\Delta(\arrowvert v\arrowvert)\geq -\lambda\arrowvert v\arrowvert,
\end{eqnarray*}
where $v$ does not vanish, with $v:=e^{f/2}h$, outside a compact set.
Since the Ricci curvature is bounded from below by $-K$ with $K$ non negative, we can apply the local Sobolev inequality proved in \cite{Sal-Cos-Uni-Ell} to perform a Moser iteration and get :
\begin{eqnarray*}
\sup_{B(x,r/2)}\arrowvert v\arrowvert\leq C(r_0,\lambda)\left(\avi_{B(x,r)}v^2d\mu_g\right)^{1/2},
\end{eqnarray*}
for any $x\in M$ and any $0<r<r_0$.
Now, by the Bishop-Gromov theorem,
\begin{eqnarray*}
\vol B(x,r)&\geq& c(n,K)e^{-c(n,K)r_p(x)}\vol B(x,r+r_p(x))\\
&\geq& c(n,K)e^{-c(n,K)r_p(x)}\vol B(p,r)\geq c(n,K,p,r)e^{-c(n,K)\sqrt{f(x)}}.
\end{eqnarray*}

Therefore, if $\epsilon>0$ and $\alpha\in[0,1)$, then, by corollary \ref{int-est-eigen-egs},

\begin{eqnarray*}
&&\sup_{B(x,r/2)}e^{(1-\epsilon)\alpha f}\arrowvert v\arrowvert\leq Ce^{c(n,K)\sqrt{f(x)}} \| e^{\alpha f} v\|_{L^2}e^{\alpha(1-\epsilon)\sup_{B(x,r)}f-\alpha\inf_{B(x,r)}f},\\
\end{eqnarray*}
where $C:=C(n,K,p,r,r_0,\lambda)$.
Finally, observe that,
\begin{eqnarray*}
(1-\epsilon)\sup_{B(x,r)}f-\inf_{B(x,r)}f\leq(2+\epsilon)\sqrt{f(x)}-\epsilon f(x)\leq -\frac{\epsilon}{2}f(x)+c(\epsilon),
\end{eqnarray*}
since $\arrowvert\nabla(2\sqrt{f})\arrowvert\leq 1$ on $M$ by the soliton identities \ref{sol-id-egs}.

Since $f$ behaves like $r_p^2$, we have :  $e^{\alpha(1-\epsilon)f}v\in L^{\infty}(M,\mathbb{R}).$

\end{proof}

\begin{rk}
The previous corollary is in sharp contrast with the shrinking case : in that case, at least heuristically, the eigenfunctions behave like polynomials since the weighted laplacian is the Ornstein-Uhlenbeck operator. 
\end{rk}

\subsubsection{Proof of theorem \ref{lic-op-dis-pos-curv}}

Now, we are in a position to prove theorem \ref{lic-op-dis-pos-curv} :

\begin{proof}[Proof of theorem \ref{lic-op-dis-pos-curv}]
Let $h\in L_f^2(M,S^2T^*M)$ be an eigenvector of  $L$ associated to an eigenvalue $-\lambda$ with $\lambda\geq 0$. We claim that $h=0$.

\begin{claim}
$\sup_{M_{\leq t}}\arrowvert h\arrowvert\leq (1+2\sup_{M_{\leq t}}R)\sup_{M_t}\arrowvert h\arrowvert.$
\end{claim}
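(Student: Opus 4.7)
The plan is to mimic, step by step, the claim appearing in the proof of Theorem~\ref{spec-sgs-pos-cur}, and to exploit the non-negativity of the curvature operator (together with the strict positivity $\Li_{\nabla f}g=2\Ric+g\geq g$) to carry Brendle's trick through in the expanding setting.

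The first key observation is the algebraic identity $L(\Li_{\nabla f}g)=0$, valid for any gradient Ricci soliton. Indeed $\Delta_fg=0$, and by the Bianchi identity $\Rm(g)\ast g=\Ric$, so $L(g)=-2\Ric$. Next, writing out a soliton solution $g(t)=(1+\epsilon t)\psi_t^*g_0$ of Ricci flow and comparing with the general evolution $\partial_t\Ric=\Delta\Ric+2\Rm\ast\Ric-2\Ric^2$ gives, after using $\nabla^2f=\Ric+\epsilon g/2$, the identity $\Delta_f\Ric+2\Rm(g)\ast\Ric=-\epsilon\Ric$, i.e.\ $L(\Ric)=\epsilon\Ric$. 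Combining: $L(\Li_{\nabla f}g)=L(2\Ric+\epsilon g)=2\epsilon\Ric-2\epsilon\Ric=0$. Setting $h_\theta:=h-\theta\Li_{\nabla f}g=h-\theta(2\Ric+g)$ therefore gives
\begin{eqnarray*}
\Delta_fh_\theta+2\Rm(g)\ast h_\theta=\lambda h_\theta+\theta\lambda\Li_{\nabla f}g.
\end{eqnarray*}

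By $(\Hyp)$ the function $f$ is proper, so $M_{\leq t}$ is compact. Define $\theta_t:=\sup\{\theta\in\mathbb{R}\,|\,h_\theta\geq 0\ \text{on}\ M_{\leq t}\}$ and pick $x\in M_{\leq t}$ together with a unit $v\in T_xM$ with $h_{\theta_t}(x)(v,v)=0$. If $x\in M_t$, then $h(x)(v,v)=\theta_t(1+2\Ric(x)(v,v))$; since $\Ric\geq 0$ one has $1+2\Ric(v,v)\geq 1$, hence $|\theta_t|\leq\sup_{M_t}|h|$, and on all of $M_{\leq t}$ and any unit $w$,
\begin{eqnarray*}
h(w,w)\geq\theta_t(1+2\Ric(w,w))\geq -\sup_{M_t}|h|(1+2\sup_{M_{\leq t}}R),
\end{eqnarray*}
using $0\leq\Ric(w,w)\leq R$. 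If instead $x\in M_{<t}$, I pull back by the flow $\phi_\tau$ of $-\nabla f$: setting $g(\tau):=\phi_\tau^*g$ and $h_{\theta_t}(\tau):=\phi_\tau^*h_{\theta_t}$, a direct computation using the soliton identities yields
\begin{eqnarray*}
\partial_\tau h_{\theta_t}(\tau)=\Delta_{L,g(\tau)}h_{\theta_t}(\tau)-(1+\lambda)h_{\theta_t}(\tau)-\lambda\theta_t\phi_\tau^*\Li_{\nabla f}g,
\end{eqnarray*}
where the inhomogeneity is a non-negative tensor when $\theta_t\leq 0$ since $\Li_{\nabla f}g\geq g>0$. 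Hamilton's local strong maximum principle for symmetric $2$-tensors (available because the curvature operator is non-negative) then forces $\ker h_{\theta_t}(\tau)$ to be parallel on a small space-time neighbourhood of $x$, and analyticity of $(M,g,\nabla f)$ together with the non-triviality of the soliton (which rules out a splitting) gives $h_{\theta_t}\equiv 0$ on $M_{<t}$. Letting $s\nearrow t$ reduces this to the previous case.

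Performing the same analysis with $-h$ in place of $h$ gives the matching upper bound, and together one obtains $|h(w,w)|\leq(1+2\sup_{M_{\leq t}}R)\sup_{M_t}|h|$ at every point of $M_{\leq t}$ and every unit $w$, which is the claim. The main technical subtlety I expect is Case~2: one must check that Hamilton's maximum principle for $2$-tensors adapts to the normalised Ricci-type flow $\partial_\tau g(\tau)=-2\Ric-g$ appearing after pull-back in the expanding setting (the zero-order term $-(1+\lambda)h$ clearly preserves the PSD cone), and that non-splitting holds, which is where the non-negative curvature operator and non-triviality of the soliton genuinely enter.
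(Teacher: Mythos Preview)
Your proposal is correct and follows essentially the same route as the paper: both define $h_\theta=h-\theta\Li_{\nabla f}g$, use the kernel identity $L(\Li_{\nabla f}g)=0$, run Brendle's trick on the compact sublevel set $M_{\leq t}$, and invoke Hamilton's strong maximum principle for symmetric $2$-tensors in the interior case to force $h_{\theta_t}\equiv 0$ on $M_{<t}$.

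The only noteworthy difference is in Case~2: the paper pulls back via the \emph{genuine Ricci flow} $g(\tau)=(1+\tau)\phi_\tau^*g$ with $\phi_\tau$ generated by $-\nabla f/(1+\tau)$, obtaining the evolution $\partial_\tau h_{\theta_t}(\tau)=\Delta_{L,g(\tau)}h_{\theta_t}(\tau)-\tfrac{\lambda}{1+\tau}h_{\theta_t}(\tau)-\lambda\theta_t\phi_\tau^*\Li_{\nabla f}g$, so that the cited local Hamilton maximum principle (Theorem~12.50 in Chow--Lu--Ni~II) applies verbatim. Your version uses the unnormalized flow $g(\tau)=\phi_\tau^*g$ satisfying $\partial_\tau g=-2\Ric-g$; your evolution equation (with the $-(1+\lambda)$ zero-order term) is indeed correct, and as you yourself flag, one then needs the easy observation that the extra $-g$ in the metric evolution and the extra $-h_{\theta_t}$ in the tensor evolution are harmless for the PSD cone. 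Both routes are equivalent after the change of variables $\tau\leftrightarrow\ln(1+\tau)$. One small omission: you do not explicitly dispose of the sub-case $x\in M_{<t}$ with $\theta_t>0$, which the paper handles by noting that then $h\geq\theta_t\Li_{\nabla f}g\geq 0$ trivially; this is harmless but worth stating since your inhomogeneity argument only covers $\theta_t\leq 0$.
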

\begin{proof}
Indeed, since $\Delta_fh+2\Rm(g)\ast h=\lambda h$ and $\Li_{\nabla f}(g)\in\ker L$, we have for any $\theta_t\in\mathbb{R}$,
$\Delta_f h_{\theta_t}+2\Rm(g)\ast h_{\theta_t}=\lambda h_{\theta_t}+\theta_t\lambda \Li_{\nabla f}(g)$, where $h_{\theta_t}=\lambda h_{\theta_t}+\theta_t\lambda \Li_{\nabla f}(g)$.

Fix $t>\min_{M} f$. Define $\theta_t:=\sup\{\theta/h_{\theta}:=h-\theta\Li_{\nabla f}(g)\arrowvert_{M_{\leq t}}\geq 0\}$. Since $f$ is proper, $M_{\leq t}$ is compact. Therefore $\theta_t$ is finite and there exists a point $x\in M_{\leq t}$ and a unitary vector $v\in T_xM$ such that $h_{\theta_t}(x)(v,v)=0.$ \\

\begin{enumerate}

\item If $x\in M_t$ then $h_{\theta_t}(x)(v,v)=0=h(x)(v,v)-\theta_t(1+2\Ric(x)(v,v))$. In particular, 
\begin{eqnarray*}
\arrowvert\theta_t\arrowvert&\leq&\frac{\sup_{M_t}\arrowvert h\arrowvert}{1+2\inf_{M_t}\Ric},\\
h&\geq& -\frac{\sup_{M_t}\arrowvert h\arrowvert}{1+2\inf_{M_t}\Ric}\Li_{\nabla f}(g)\geq-(1+2\sup_{M_{\leq t}}R)\sup_{M_t}\arrowvert h\arrowvert g\\
\end{eqnarray*}

\item If $x\in M_{<t}$ and $\theta_t\geq 0$, then $h\geq 0$ on $M_{\leq t}$.

If $\theta_t\leq 0$, define $g(\tau):=(1+\tau)\phi_{\tau}^*g$ and $h_{\theta_t}(\tau):=(1+\tau)\phi_{\tau}^*h_{\theta_t}$ where $\phi_{\tau}$ is the flow generated by $-\nabla f/(1+\tau)$, then, if $\theta_t\leq 0$, 
\begin{eqnarray*}
\partial_{\tau}h_{\theta_t}(\tau)&=&\Delta_{L,g(\tau)}h_{\theta_t}(\tau)-\frac{\lambda h_{\theta_t}(\tau)}{1+\tau}-\lambda\theta_t \phi_{\tau}^*\Li_{\nabla f}(g)\\
&\geq&\Delta_{L,g(\tau)}h_{\theta_t}(\tau)-\frac{\lambda h_{\theta_t}(\tau)}{1+\tau}.
\end{eqnarray*}

 There exists a neighborhood $U_x$ of $x$ and a time $T_x>0$ such that $$(U_x,g(\tau))_{\tau\in[-T_x,T_x]}\subset (M_{<t},g),$$ so that on $(U_x,g(\tau))_{\tau\in[-T_x,T_x]}$, $h_{\theta_t}(\tau)\geq 0$. Since the sectional curvature is non negative, one can adapt a local version of Hamilton's maximum principle for system [Theorem $12.50$, \cite{Cho-Lu-Ni-II}] to establish that $\ker h_{\theta_t}$ is a smooth distribution invariant by parallel translation in the case where $\theta_t\leq 0$. As we assume $\Ric>0$, the manifold cannot split. In particular, $h_{\theta_t}\equiv 0$ on $U_x$. As $M_{<t}$ is totally geodesic, $h_{\theta_t}\equiv 0$ on $M_{< t}$. In particular, for any $s<t$,
\begin{eqnarray*}
\arrowvert\theta_t\arrowvert\leq\sup_{M_s}\arrowvert h\arrowvert.
\end{eqnarray*}
If $s$ tends to $t$, one has,
\begin{eqnarray*}
h\geq-(1+2\sup_{M_{\leq t}}R)\sup_{M_t}\arrowvert h\arrowvert g\quad\mbox{on $M_{\leq t}$.}
\end{eqnarray*}

The same reasoning as above applied to $-h$ gives the estimate.
\end{enumerate}
\end{proof}
Now, a simple modification of the arguments of the proof of corollary \ref{amorce-decay} applied to symmetric $2$-tensors implies an exponential decay of $h$. By the previous claim, we get $h=0$.
\end{proof}

\subsubsection{Proof of theorems \ref{lic-op-dis-pos-scal-curv} and \ref{lic-op-dis-Bochner}}

First, we turn to the proof of theorem \ref{lic-op-dis-pos-scal-curv}.

We need an a priori estimate for symmetric $2$-tensors satisfying $Lh=\lambda h$. This is a simple adaptation of an argument due to Anderson and Chow which is very particular to the structure of the Riemann tensor in dimension $3$.

\begin{theo}[Anderson-Chow]\label{And-Cho-est}
Let $(M^3,g,\nabla f)$ be an expanding gradient Ricci soliton with positive scalar curvature satisfying $(\Hyp)$. Let $h$ be a symmetric $2$-tensor on $M^3$ such that $\Delta_f(h)+2\Rm(g)\ast h=-\lambda h$ for some real number $\lambda\leq 1$. Then,
\begin{eqnarray*}
\sup_{f\leq t}\left(\frac{\arrowvert h\arrowvert}{R}\right)=\sup_{f=t}\left(\frac{\arrowvert h\arrowvert}{R}\right),
\end{eqnarray*}
for $t$ large enough.
\end{theo}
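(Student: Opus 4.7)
The strategy is to apply a maximum principle to $w:=|h|^2/R^2$ on the compact sublevel set $M_{\leq t}:=\{f\leq t\}$. Compactness follows from the properness of $f$ (a consequence of $(\Hyp)$ together with the soliton identities of lemma \ref{sol-id-egs}), and $w$ is smooth on $M_{\leq t}$ since $R>0$. The hypothesis ``for $t$ large enough'' only serves to guarantee that $M_{\leq t}$ is non-empty.

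Suppose by contradiction that $w$ attains its supremum over $M_{\leq t}$ at an interior point $x_0\in\{f<t\}$ with $w(x_0)>0$. At $x_0$ one has $\nabla w=0$ and $\Delta_f w\leq 0$. The critical point condition $\nabla|h|^2=2(|h|^2/R)\nabla R$ combined with Kato's inequality $|\nabla|h|^2|\leq 2|h||\nabla h|$ produces the crucial a priori gradient estimate
\begin{eqnarray*}
|h|^2|\nabla R|^2\leq R^2|\nabla h|^2\quad\text{at }x_0.
\end{eqnarray*}
Using the eigenvalue equation, which gives $\Delta_f|h|^2 = 2|\nabla h|^2 - 4\Rm(h,h) - 2\lambda|h|^2$, together with the soliton identity $\Delta_f R=-R-2|\Ric|^2$ (derived by combining the conservation law $R+|\nabla f|^2-f={\rm const}$ with the Bochner formula for $|\nabla f|^2$), so that $\Delta_f R^2 = -2R^2 - 4R|\Ric|^2 + 2|\nabla R|^2$, and exploiting the fact that at a critical point of a quotient $\Delta_f(u/v)=(\Delta_f u)/v - u(\Delta_f v)/v^2$, the differential inequality $\Delta_f w\leq 0$ combined with the gradient estimate yields
\begin{eqnarray*}
2R\Rm(h,h)-2|\Ric|^2|h|^2\geq(1-\lambda)R|h|^2\geq 0\quad\text{at }x_0.
\end{eqnarray*}

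The contradiction will then be supplied by the Anderson-Chow algebraic inequality specific to dimension $3$: for every symmetric $2$-tensor $h$ on a Riemannian $3$-manifold with $R>0$,
\begin{eqnarray*}
R\Rm(h,h)\leq|\Ric|^2|h|^2.
\end{eqnarray*}
This is genuinely three-dimensional: the vanishing of the Weyl tensor forces $\Rm$ to be an algebraic expression in $\Ric$ and $g$, after which a diagonalization of $\Ric$ reduces the inequality to a non-trivial but elementary estimate on three scalar eigenvalues. When $\lambda<1$ the two inequalities clash unless $h(x_0)=0$, in which case $w\equiv 0$ on $M_{\leq t}$ and the theorem is trivial. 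When $\lambda=1$ both inequalities are saturated at $x_0$; the strong maximum principle applied to $w$ then propagates the interior maximum throughout the connected component of $\{f<t\}$, so $w$ is constant there and its interior supremum equals the boundary supremum on $\{f=t\}$.

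The principal obstacle is the Anderson-Chow inequality $R\Rm(h,h)\leq|\Ric|^2|h|^2$, which is sharp only in dimension $3$ and whose proof requires careful case analysis on the signs of the Ricci eigenvalues; this is the content borrowed from \cite{And-Cho}. A secondary technical subtlety is the borderline case $\lambda=1$, where one cannot conclude directly from the pointwise contradiction and must instead invoke the strong maximum principle to conclude the equality of suprema.
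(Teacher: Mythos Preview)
Your approach is essentially the same as the paper's: apply the maximum principle to $w=|h|^2/R^2$ on the compact sublevel $\{f\leq t\}$, using the three-dimensional Anderson--Chow algebraic inequality. Your formulation of that inequality, $R\,\Rm(h,h)\leq |\Ric|^2|h|^2$, is equivalent to the quartic expression
\[
|h|^2|\Ric|^2 - 2R\,(\tr h)\langle\Ric,h\rangle + 2R\langle\Ric,h^2\rangle + \tfrac{R^2}{2}\big((\tr h)^2-|h|^2\big)\geq 0
\]
that the paper quotes from \cite{And-Cho}; this is just the identity $\Rm(h,h)=2(\tr h)\langle\Ric,h\rangle-2\langle\Ric,h^2\rangle-\tfrac{R}{2}((\tr h)^2-|h|^2)$, valid in dimension~3 because the Weyl tensor vanishes.

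The one point where the paper's presentation is cleaner is that it computes $\Delta_f w$ \emph{globally}, obtaining
\[
\Delta_f w + 2\langle\nabla\ln R,\nabla w\rangle \;=\; \frac{2}{R^4}|R\nabla h-\nabla R\otimes h|^2 + 2(1-\lambda)w + \frac{4}{R^2}(\text{A--C term})\;\geq\;0,
\]
so that the \emph{weak} maximum principle already forces the supremum onto $\{f=t\}$, uniformly for all $\lambda\leq 1$. Your argument, by contrast, works only at a critical point (where the simplification $\Delta_f(u/v)=\Delta_f u/v-u\Delta_f v/v^2$ holds), and in the borderline case $\lambda=1$ you invoke the strong maximum principle; but the strong maximum principle requires precisely the global differential inequality you have not written down. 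This is easily repaired---just carry the extra drift term $-2\langle\nabla\ln R,\nabla w\rangle$ through the computation---and once you do, the case distinction on $\lambda$ disappears. A minor remark: ``$t$ large enough'' also guarantees that $\{f=t\}$ is a smooth hypersurface (a regular level set, since $|\nabla f|^2=f+\mu(g)-R\to+\infty$), not merely that $M_{\leq t}$ is non-empty.
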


\begin{proof}
Indeed, by adapting the computations of \cite{And-Cho}, one has,
\begin{eqnarray*}
\Delta_f\left(\frac{\arrowvert h\arrowvert^2}{R^2}\right)&=&\frac{2}{R^4}\arrowvert R\nabla h-\nabla R h\arrowvert^2+2(-\lambda+1)\left(\frac{\arrowvert h\arrowvert^2}{R^2}\right)-2\left\langle\nabla\ln R,\nabla\left(\frac{\arrowvert h\arrowvert^2}{R^2}\right)\right\rangle\\
&&+4\left(\arrowvert h\arrowvert^2\arrowvert\Ric\arrowvert^2-2R\tr h<\Ric,h>+2R<\Ric,h^2>+\frac{R^2}{2}\left((\tr h)^2-\arrowvert h\arrowvert^2\right)\right).
\end{eqnarray*}
In \cite{And-Cho}, it is shown that the last term is always nonnegative for any symmetric $2$-tensor $h$ in dimension $3$. Therefore,
\begin{eqnarray*}
\Delta_f\left(\frac{\arrowvert h\arrowvert^2}{R^2}\right)&\geq&2(-\lambda+1)\left(\frac{\arrowvert h\arrowvert^2}{R^2}\right)-2\left\langle\nabla\ln R,\nabla\left(\frac{\arrowvert h\arrowvert^2}{R^2}\right)\right\rangle\\
&\geq&-2\left\langle\nabla\ln R,\nabla\left(\frac{\arrowvert h\arrowvert^2}{R^2}\right)\right\rangle.\\
\end{eqnarray*}
The assumption $(\Hyp)$ is only used to ensure the properness of the potential function $f$. By applying the maximum principle to the domain $\{f\leq t\}$ for sufficiently large $t$ such that $\{f=t\}$ is a smooth compact hypersurface, we get the result.
\end{proof}

\begin{proof}[Proof of theorem \ref{lic-op-dis-pos-scal-curv}]
We will actually prove that $\sigma_{disc}(L)\subset(1,+\infty)$. Indeed, if $h$ is an eigenvector in $L^2_f$ of $L$ associated to an eigenvalue $\lambda\leq 1$, then on the one hand, by corollary \ref{amorce-decay}, $\exp\left( \alpha f\right)h\in L^{\infty}(M,\mathbb{R})$ for any $\alpha\in[0,1)$  and on the other hand, by theorem \ref{And-Cho-est},
\begin{eqnarray*}
\sup_{f\leq t}\left(\frac{\arrowvert h\arrowvert}{R}\right)=\sup_{f=t}\left(\frac{\arrowvert h\arrowvert}{R}\right).
\end{eqnarray*}
Therefore, by assumption on the asymptotical behaviour of the scalar curvature, we get $\sup_{f=t}\left(\frac{\arrowvert h\arrowvert}{R}\right)\rightarrow 0$ as $t$ goes to $+\infty$. Hence $h=0$.
\end{proof}

Now comes the proof of theorem \ref{lic-op-dis-Bochner} :

\begin{proof}[Proof of theorem \ref{lic-op-dis-Bochner}]
Let $h$ be an eigenvector in $L^2_f$ of $L$ associated to an eigenvalue $\lambda\leq 0$. Then,
\begin{eqnarray*}
\Delta_f\arrowvert h\arrowvert^2=2\arrowvert\nabla h\arrowvert^2-2\lambda\arrowvert h\arrowvert^2-4<\Rm(g)\ast h,h>.
\end{eqnarray*}
By integrating this inequality, and by using the Kato inequality and the bound on the spectrum of the laplacian acting on functions given by proposition \ref{Mun-Wan-Har-Egs}, one gets,
\begin{eqnarray*}
\int_M\left(\inf_MR+\frac{n}{2}-2\|\Rm(g)\|_{L^{\infty}}\right)\arrowvert h\arrowvert^2d\mu_f\leq\lambda\|h\|^2_{L_f^2}\leq 0.
\end{eqnarray*}
By the very assumption made on the curvature tensor, one gets $h=0$.

\end{proof}

\appendix
\section{Appendix}
\subsection{Identities on gradient Ricci solitons}

We recall some classical identities on expanding and steady gradient Ricci solitons. See for instance [Chap.$4$,\cite{Ben}] for a proof of the following lemmata. 

\begin{lemma}\label{sol-id-egs}
Let $(M,g,\nabla f)$ be a normalized expanding gradient Ricci soliton, i.e. $\int_{M}e^{-f}d\mu(g)=1$. Then
\begin{eqnarray*}
&&\Delta f=R+\frac{n}{2},\\
&&\arrowvert \nabla f\arrowvert^2+R=f+\mu(g),\\
&&2\Ric(\nabla f)+\nabla R=0,
\end{eqnarray*}
\end{lemma}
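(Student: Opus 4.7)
The three identities are standard consequences of the expanding soliton equation $\Ric(g) + \tfrac{g}{2} = \nabla^2 f$, and the plan is to derive them in the order stated, since each one feeds into the next.

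\textbf{First identity.} I would simply take the trace of the soliton equation with respect to $g$. Since $\tr_g \nabla^2 f = \Delta f$, $\tr_g \Ric = R$, and $\tr_g g = n$, this yields $\Delta f = R + \tfrac{n}{2}$ immediately.

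\textbf{Third identity.} This is the divergence form of the soliton identity. The plan is to take $\div_g$ of both sides of $\nabla^2 f = \Ric + \tfrac{g}{2}$. On the right, the contracted second Bianchi identity gives $\div \Ric = \tfrac{1}{2}\nabla R$ and $\div g = 0$. On the left, the Ricci commutation formula gives
\begin{eqnarray*}
\nabla^i \nabla_i \nabla_j f = \nabla_j \Delta f + \Ric_{jk}\nabla^k f.
\end{eqnarray*}
Substituting $\Delta f = R + n/2$ from the first identity makes $\nabla_j \Delta f = \nabla_j R$, and matching both sides gives $\nabla R + \Ric(\nabla f) = \tfrac{1}{2}\nabla R$, i.e.\ $2\Ric(\nabla f) + \nabla R = 0$.

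\textbf{Second identity.} Here the plan is to show that $|\nabla f|^2 + R - f$ has zero gradient and then identify the constant. Using the soliton equation,
\begin{eqnarray*}
\tfrac{1}{2}\nabla_j |\nabla f|^2 = \nabla^2 f(\nabla f,\partial_j) = \Ric(\nabla f)_j + \tfrac{1}{2}\nabla_j f.
\end{eqnarray*}
Combining with the third identity $2\Ric(\nabla f) = -\nabla R$ yields $\nabla(|\nabla f|^2 + R - f) = 0$, so the quantity is a constant which, by the normalization $\int_M e^{-f}\,d\mu(g) = 1$, is by definition $\mu(g)$.

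The only step that is genuinely more than a one-line manipulation is the third identity, because it requires the commutation formula $[\nabla^i,\nabla_i]\nabla_j f = \Ric_{jk}\nabla^k f$. Everything else is a direct trace or a chain-rule computation. I would write the argument in the order first $\to$ third $\to$ second, since the second identity depends on the third, and the third depends on the first.
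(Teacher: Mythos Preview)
Your argument is correct and is the standard derivation of these identities. The paper itself does not give a proof of this lemma; it simply records the identities and refers the reader to [Chap.~4, \cite{Ben}]. Your route (trace, then divergence with the contracted Bianchi identity and the commutation formula, then gradient of $|\nabla f|^2+R-f$) is exactly the usual one found in that reference and elsewhere.

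One small wording point: in the second identity, the normalization $\int_M e^{-f}\,d\mu(g)=1$ does not make the constant equal to $\mu(g)$ ``by definition'' in any computational sense; rather, the normalization fixes $f$ (which is a priori only determined up to an additive constant), and the resulting constant $|\nabla f|^2+R-f$ is then \emph{named} $\mu(g)$, the entropy. This is how the paper presents it as well. It is a cosmetic matter and does not affect the validity of your proof.
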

where $R=R(g)$ is the scalar curvature and $\mu(g)$ is called the entropy of the expanding gradient Ricci soliton. 

\begin{lemma}\label{sol-id-sgs}
Let $(M,g,\nabla f)$ be a steady gradient Ricci soliton. Then,
\begin{eqnarray*}
&&\Delta f=R,\\
&&\arrowvert \nabla f\arrowvert^2+R=\lambda(g),\\
&&2\Ric(\nabla f)+\nabla R=0,
\end{eqnarray*}
\end{lemma}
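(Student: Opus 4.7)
The identities for steady gradient Ricci solitons are classical consequences of the defining equation $\Ric = \nabla^2 f$ (which corresponds to $\epsilon = 0$ in the convention $\frac{1}{2}\Li_{\nabla f}(g) = \Ric + \frac{\epsilon g}{2}$, since $\frac{1}{2}\Li_{\nabla f}(g) = \nabla^2 f$), combined with the second Bianchi identity and the Ricci commutation formula. The plan is essentially to trace, differentiate and contract the soliton equation in a specific order.

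First, I would establish $\Delta f = R$ by simply taking the metric trace of $\Ric_{ij} = \nabla_i \nabla_j f$.

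Next, I would prove $2\Ric(\nabla f) + \nabla R = 0$. The key is to compute $\div(\nabla^2 f)$ in two ways. On the one hand, using the Ricci commutation formula,
\begin{equation*}
\nabla_i \nabla_i \nabla_j f = \nabla_j \nabla_i \nabla_i f + \Ric_{ji} \nabla_i f = \nabla_j \Delta f + \Ric(\nabla f)_j = \nabla_j R + \Ric(\nabla f)_j,
\end{equation*}
where the last equality uses the already-proved $\Delta f = R$. On the other hand, the contracted second Bianchi identity gives $\div \Ric = \frac{1}{2}\nabla R$, and since $\Ric = \nabla^2 f$, we also have $\div(\nabla^2 f) = \frac{1}{2}\nabla R$. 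Equating the two expressions yields the desired identity.

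Finally, for $|\nabla f|^2 + R = \lambda(g)$ (a constant), I would differentiate: using the soliton equation,
\begin{equation*}
\nabla_j |\nabla f|^2 = 2\, \nabla_j \nabla_i f \cdot \nabla_i f = 2\Ric_{ij}\nabla_i f = 2\Ric(\nabla f)_j = -\nabla_j R,
\end{equation*}
by the previous step. Hence $\nabla(|\nabla f|^2 + R) = 0$, and since $M$ is connected the quantity is a constant, which we name $\lambda(g)$. There is no genuine obstacle here — the only subtlety is keeping the sign conventions consistent with $\epsilon = 0$ in the paper's normalization of the soliton equation. This is precisely the steady analogue of the identities the author already quoted (without proof) as Lemma \ref{sol-id-egs} for the expanding case, and the argument is cited as being found in [Chap.~4,\cite{Ben}].
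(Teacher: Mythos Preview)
Your proof is correct and is precisely the standard argument for these identities. The paper itself does not give a proof of this lemma; it simply refers the reader to [Chap.~4, \cite{Ben}], where exactly this trace--Bianchi--differentiate sequence is carried out, so there is nothing further to compare.
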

where $\lambda(g)$ is the energy of the steady gradient Ricci soliton.
\begin{rk}
A steady gradient Ricci soliton is not normalized by its definition. Sometimes, we do so by normalizing the energy $\lambda(g)$ to be $1$.
\end{rk}
\subsection{Growth of the potential function}

\begin{lemma}\label{gro-pot-fct}
\begin{enumerate}
\item Let $(M,g,\nabla f)$ be a steady gradient Ricci soliton with nonnegative Ricci curvature such that $\lim_{+\infty}R=0$. Then the potential function $f$ is proper and satisfies 
\begin{eqnarray}
c_1r_p(x)+c_2\leq f(x)\leq c_3r_p(x)+c_4 \label{gro-pot-fct-sgs},
\end{eqnarray}
for any fixed point $p\in M$ and positive constants $(c_i)_i$ depending on $p$ and $g$.\\
\item Let $(M,g,\nabla f)$ be an expanding gradient Ricci soliton satisfying $\nabla^2f\geq cg$ for a positive constant $c$. Then $f$ is proper and satisfies 
\begin{eqnarray}
c_1r_p^2(x)+c_2\leq f(x)\leq c_3r_p^2(x)+c_4 \label{gro-pot-fct-egs},
\end{eqnarray}
for any fixed point $p\in M$ and positive constants $(c_i)_i$ depending on $p$ and $g$.\\

\end{enumerate}
\end{lemma}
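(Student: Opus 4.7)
The plan is to handle the steady and expanding cases separately, exploiting the respective soliton identities from Lemma~\ref{sol-id-sgs} and Lemma~\ref{sol-id-egs}.

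For the steady case I would first obtain the upper bound from the identity $|\nabla f|^2 + R = \lambda(g)$ combined with $R \geq 0$ (which follows from $\Ric \geq 0$): this yields $|\nabla f| \leq \sqrt{\lambda(g)}$ everywhere, so integrating along any minimal geodesic from $p$ to $x$ gives $f(x) \leq f(p) + \sqrt{\lambda(g)}\, r_p(x)$. For the linear lower bound, the assumption $\lim_{+\infty} R = 0$ together with the same identity guarantees the existence of a compact set $K$ outside of which $|\nabla f| \geq \sqrt{\lambda(g)}/2 =: \alpha$. I would then consider the backward flow $\phi_t$ of the smooth vector field $-\nabla f/|\nabla f|^2$ on $M \setminus K$: along any trajectory $f$ strictly decreases at unit rate while the Riemannian speed is at most $1/\alpha$. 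Starting from any $x \notin K$ and running the flow until it first enters $K$ at some time $T$ with $\phi_T(x) \in \partial K$, the bound $T \leq f(x) - \min_K f$ together with $d_g(x, K) \leq T/\alpha$ yields the linear lower bound $f(x) \geq \alpha\, r_p(x) + C$ via the triangle inequality.

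The main obstacle in the steady case is rigorously justifying that the backward flow reaches $K$ in finite time, which requires an a priori lower bound on $f$. The convexity $\nabla^2 f = \Ric \geq 0$ combined with known structural results for nontrivial steady solitons with nonnegative Ricci curvature (e.g.\ Hamilton's strong maximum principle applied to $R$, or Chen-type arguments showing that $R$ attains its positive maximum) ensures that $f$ attains its infimum on a compact totally convex set, which can be arranged to lie inside $K$. Were the flow to stay outside $K$ for all time, $f \circ \phi_t$ would tend to $-\infty$, contradicting this lower bound.

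For the expanding case the lower bound is direct: along a unit-speed minimal geodesic $\gamma:[0,r_p(x)] \to M$, the function $h(s) := f(\gamma(s))$ satisfies $h''(s) = \nabla^2 f(\gamma',\gamma') \geq c$, and integrating twice yields
\begin{eqnarray*}
f(x) \geq f(p) + h'(0)\, r_p(x) + \frac{c}{2} r_p(x)^2 \geq \frac{c}{4} r_p(x)^2 + C,
\end{eqnarray*}
after completing the square and using $|h'(0)| \leq |\nabla f|(p)$. For the matching upper bound, the hypothesis $\nabla^2 f \geq cg$ combined with $\nabla^2 f = \Ric + g/2$ implies $R \geq n(c - 1/2)$, so the soliton identity $|\nabla f|^2 = f + \mu(g) - R$ gives $|\nabla f|^2 \leq f + \tilde{C}$ for some constant $\tilde{C}$. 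Setting $u(s) := h(s) + \tilde{C} \geq 0$, one has $|u'(s)| \leq \sqrt{u(s)}$, equivalently $|(2\sqrt{u}\,)'(s)| \leq 1$; integrating from $0$ to $r_p(x)$ and squaring recovers the desired quadratic upper bound. Properness then follows immediately from the quadratic lower bound.
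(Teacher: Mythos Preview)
Your argument is essentially correct and in fact supplies more than the paper does: the paper gives no proof of this lemma at all, but simply refers to \cite{Car-Ni} for the steady case and to \cite{Che-Der} for the expanding case. Your approach is the standard one that underlies those references.

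A brief comparison. In the expanding case your argument is exactly the classical one: integrate the Hessian lower bound $\nabla^2 f\geq cg$ twice along a minimal geodesic for the quadratic lower bound, and use the soliton identity $\arrowvert\nabla f\arrowvert^2=f+\mu(g)-R$ together with the uniform lower bound on $R$ to get $\arrowvert\nabla(2\sqrt{f+\tilde C})\arrowvert\leq 1$, which upon integration gives the quadratic upper bound. This is precisely what one finds in \cite{Che-Der}. In the steady case your upper bound from $\arrowvert\nabla f\arrowvert\leq\sqrt{\lambda(g)}$ is immediate, and your lower bound via the normalized backward gradient flow is again the standard route. You correctly isolate the only delicate point, namely that the flow must hit the compact set $K$ in finite time, which amounts to $f$ being bounded below. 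Your appeal to the structure theory of nonnegatively curved steady solitons (strong maximum principle for $R$, giving $R>0$ everywhere and a critical point of $f$ where $R$ attains its maximum $\lambda(g)$) is exactly how \cite{Car-Ni} and related references close this gap; one could alternatively note that along the forward flow of $\nabla f$ the quantity $\arrowvert\nabla f\arrowvert^2$ is non-decreasing by convexity of $f$, which prevents $f$ from being unbounded below once $\arrowvert\nabla f\arrowvert\geq\alpha$ outside a compact set.

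In short: the paper outsources the proof, and what you wrote is a faithful and correct reconstruction of the cited arguments.
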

The proof of (\ref{gro-pot-fct-sgs}) can be found in \cite{Car-Ni} and the proof of the growth of the potential function on expanding gradient Ricci solitons is proved at least in \cite{Che-Der}.

\subsection{Equivalence of flows}

\begin{lemma}\label{equ-flo}
The Ricci flow and the modified Ricci flow are equivalent in the following sense :

1) If $(M,g(t))_{t\in[0,T)}$ is a solution to (MRF) then $(M,\tilde{g}(t))_{t\in[0,\tilde{T})}$ is a solution to the Ricci flow where
$$\tilde{g}(t):=(1+t)\phi_t^{\ast}g(\ln(1+t)),$$ where $(\phi_t)_t$ is the one parameter subgroup of diffeomorphisms generated by the time-dependent vector field $-X^0/(1+t)$ and $\tilde{T}:=e^T-1$.\\

2) And vice-versa.  
\end{lemma}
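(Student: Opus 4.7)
The strategy is a direct verification of the two conjugacies by differentiating the candidate formulas. I will treat the expanding case ($\epsilon=1$) explicitly, since the formula in the statement involves the scaling $(1+t)$ and time change $s=\ln(1+t)$ characteristic of self-similar expanders; for the steady case ($\epsilon=0$) the same argument goes through with the scalar factor removed, i.e.\ $\tilde g(t):=\phi_t^{*}g(t)$ with $(\phi_t)_t$ generated by $-X^0$ (constant in $t$) and $\tilde T=T$.

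First I would set $s(t):=\ln(1+t)$, so that $s'(t)=1/(1+t)$, and compute, using the standard formula for the $t$-derivative of a pullback by a time-dependent flow,
\begin{equation*}
\partial_t\bigl(\phi_t^{*}g(s(t))\bigr)
=\phi_t^{*}\!\left(\Li_{-X^0/(1+t)}g(s(t))\right)+\phi_t^{*}(\partial_s g(s(t)))\,s'(t).
\end{equation*}
Multiplying by $(1+t)$ and adding the contribution of the scalar factor gives
\begin{equation*}
\partial_t\tilde g(t)
=\phi_t^{*}g(s(t))-\phi_t^{*}\Li_{X^0}g(s(t))+\phi_t^{*}\bigl(\partial_s g(s(t))\bigr).
\end{equation*}
Now I would substitute the (MRF) equation $\partial_s g=-2\Ric(g)-g+\Li_{X^0}(g)$ for the last term. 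The $\phi_t^{*}g$ and $\phi_t^{*}\Li_{X^0}g$ contributions cancel exactly, leaving $\partial_t\tilde g(t)=-2\phi_t^{*}\Ric(g(s(t)))$. Since Ricci is diffeomorphism-invariant and $2$-homogeneous under scaling (i.e.\ $\Ric(cg)=\Ric(g)$ as a $(0,2)$-tensor), we get $\phi_t^{*}\Ric(g(s(t)))=\Ric(\phi_t^{*}g(s(t)))=\Ric(\tilde g(t)/(1+t))=\Ric(\tilde g(t))$, which is precisely the Ricci flow equation.

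For the converse, given a Ricci flow $(\tilde g(t))_{t\in[0,\tilde T)}$ with $\tilde g(0)=g_0+h$, I would invert the previous transformation: set $t(s):=e^s-1$, let $(\psi_s)_s$ be the one-parameter family of diffeomorphisms generated by $X^0$ (for expanders, $\psi_s$ is defined via the time-dependent vector field $X^0/(1+t(s))$ reparametrized, or equivalently is the inverse of $\phi_{t(s)}$), and define
\begin{equation*}
g(s):=\frac{1}{1+t(s)}\,\psi_s^{*}\tilde g(t(s)).
\end{equation*}
Differentiating as above and using that $\tilde g$ satisfies Ricci flow produces exactly $\partial_s g=-2\Ric(g)-g+\Li_{X^0}(g)$, i.e.\ the (MRF) with $\epsilon=1$; the time interval becomes $[0,T)$ with $T=\ln(1+\tilde T)$.

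There is no real obstacle here; the only point requiring care is the bookkeeping of the three sources of time dependence (the pullback by $\phi_t$, the scalar factor $1+t$, and the reparametrization $s=\ln(1+t)$), together with the elementary but crucial observation that the Ricci tensor is scale-invariant as a $(0,2)$-tensor, which is what makes the homothety in the definition of $\tilde g$ compatible with Ricci flow. Completeness of $X^0$ (hence well-definedness of $\phi_t$ for all $t$) is guaranteed by the hypothesis that $(M,g_0)$ is complete, which is the result of Zhang quoted in the introduction.
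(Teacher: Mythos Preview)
Your computation is correct and is exactly the standard verification one would expect; the paper in fact states Lemma~\ref{equ-flo} without proof, so there is nothing to compare against. One minor terminological slip: you call the Ricci tensor ``$2$-homogeneous'' but then (correctly) write $\Ric(cg)=\Ric(g)$, which is scale \emph{invariance} (i.e.\ $0$-homogeneity) of $\Ric$ as a $(0,2)$-tensor; the equation you actually use is the right one, so this does not affect the argument.
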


\bibliographystyle{alpha}
\bibliography{bib-egs-stab}

\end{document}